\theoremstyle{plain}
\newtheorem{theorem}{Theorem}[section]
\newtheorem{lemma}[theorem]{Lemma}
\newtheorem{proposition}[theorem]{Proposition}
\newtheorem{corollary}[theorem]{Corollary}
\newtheorem{Bounded Diameter Lemma}[theorem]{Bounded Diameter Lemma}
\theoremstyle{definition}
\newtheorem{remark}[theorem]{Remark}
\newcommand{\Hmm}[1]{\leavevmode{\marginpar{\tiny%
$\hbox to 0mm{\hspace*{-0.5mm}$\leftarrow$\hss}%
\vcenter{\vrule depth 0.1mm height 0.1mm width \the\marginparwidth}%
\hbox to
0mm{\hss$\rightarrow$\hspace*{-0.5mm}}$\\\relax\raggedright #1}}}
\def\R{\mathbb{R}}
\DeclareFixedFont{\Acknowledgment}{OT1}{cmr}{bx}{n}{14pt}
\begin{document}
\title{Circle patterns on surfaces of finite topological type}
\author{Huabin Ge, Bobo Hua, Ze Zhou}
\date{}

\maketitle

\begin{abstract}
This paper investigates circle patterns with obtuse exterior intersection angles on surfaces of finite topological type. We characterise the images of the curvature maps and establish several equivalent conditions regarding long time behaviors of Chow-Luo's combinatorial Ricci flows for these patterns. As consequences, several generalizations of circle pattern theorem are obtained. Moreover, our approach suggests a computational method to find the desired circle patterns.

\medskip
\noindent{\bf Mathematics Subject Classifications (2000):} 52C26, 52C25.

\end{abstract}


\setcounter{section}{-1}

\section{Introduction}
\subsection{Background}The patterns of circles were introduced as useful tools to study the geometry and topology of 3-manifolds by  Thurston \cite[Chap. 13]{T1}. He also posed a conjecture regarding the convergence of infinitesimal hexagonal tangent circle patterns to conformal mappings \cite{T2}, which was proved by Rodin-Sullivan \cite{RS}. From then on, circle patterns  have played significant roles in various problems in combinatorics \cite{Schr1, Schr2,Liu}, discrete and computational geometry \cite{Stephenson, Dai}, minimal surfaces \cite{Bob1}, and many others.

Let $\mathcal T$ be a triangulation of a compact oriented surface $S$ (possibly with boundary) of finite topological type. Suppose that $\mu$ is a constant curvature metric on $S$. A circle pattern $\mathcal P$ on $(S,\mu)$ is a collection of oriented circles. And $\mathcal P$ is called $\mathcal T$-type, if there exists a geodesic triangulation $\mathcal T(\mu)$ of $(S,\mu)$ with the following properties: $(i)$ $\mathcal T(\mu)$ is isotopic to $\mathcal T$; $(ii)$ the vertices of $\mathcal T(\mu)$ coincide with the centers of the circles in $\mathcal P$. Assume that $V,E,F$ are the sets of vertices, edges and triangles of $\mathcal T$.  In this paper, we mainly focus on these $\mathcal T$-type circle patterns $\mathcal P=\{C_v: v\in V\}$
  such that $C_{u}$ and $C_{w}$ intersect with each other whenever there exists an edge between $u$ and $w$. Under this condition we have the exterior intersection angle $\Theta(e)\in[0,\pi)$ for every $e\in E$.  One refers to Stephenson's monograph
\cite{Stephenson} for more details on circle patterns.

Given a function $\Theta: E \to[0,\pi)$ defined on the edge set of $\mathcal T$, let us consider the following question: Does there exist a $\mathcal T$-type circle pattern whose exterior intersection angle function is given by $\Theta $? And if it does, to what extent is the circle pattern unique? Under the condition that $S$ has empty boundary and $0\leq \Theta\leq \pi/2$, a celebrated answer to this question is the following circle pattern theorem due to Thurston \cite[Chap. 13]{T1}.
\begin{theorem}[Thurston]\label{T-0-1}
Let $\mathcal T$ be a triangulation of an oriented closed surface $S$ of genus $g>0$. Suppose that $\Theta: E \to [0,\pi/2]$ is a function satisfying the following conditions:
\begin{itemize}
 \item[$(i)$] If the edges $e_1,e_2,e_3$ form a null-homotopic closed curve in $S$, and if $
 \sum_{l=1}^3 \Theta(e_l)\geq \pi$, then these edges form the boundary of a triangle of $\mathcal T$;
\item[$(ii)$] If the edges $e_1,e_2,e_3,e_4$ form a null-homotopic closed curve in $S$ and if $\sum_{l=1}^4 \Theta(e_i)=2\pi$, then these edges form the boundary of the union of two adjacent triangles.
\end{itemize}
Then there exists a constant curvature (equal to $0$ for $g=1$ and equal to $-1$ for $g>1$) metric $\mu$ on $S$ such that $(S,\mu)$ supports a $\mathcal T$-type circle pattern $\mathcal P$ with the exterior intersection angles given by $\Theta$. Moreover,  the pair $(\mu,\mathcal P)$ is unique up to isometries if $g>1$, and up to similarities if $g=1$.
\end{theorem}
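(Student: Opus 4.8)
The plan is to recast the existence statement as a variational problem for a discrete curvature map on the space of circle radii, in the spirit of Thurston, Colin de Verdi\`{e}re, Marden--Rodin and Chow--Luo, and to deduce uniqueness from convexity.

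\textbf{From radii to cone metrics.} Work in the model geometry $\mathbb{H}^{2}$ when $g>1$ and $\mathbb{R}^{2}$ when $g=1$. Given a radius vector $r=(r_v)_{v\in V}\in\mathbb{R}_{>0}^{V}$, assign to each edge $uw\in E$ the length $\ell_{uw}(r)$ determined by $\cosh\ell_{uw}=\cosh r_u\cosh r_w+\cos\Theta(uw)\sinh r_u\sinh r_w$ in the hyperbolic case and by $\ell_{uw}^{2}=r_u^{2}+r_w^{2}+2\cos\Theta(uw)\,r_u r_w$ in the Euclidean case; this is exactly the distance between the centres of two circles of radii $r_u,r_w$ meeting with exterior angle $\Theta(uw)$. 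Because $0\le\Theta\le\pi/2$ we have $\cos\Theta\ge0$, and ordering the three radii of a face shows that for every triangle $f=\{u,v,w\}\in F$ the lengths $\ell_{uv},\ell_{vw},\ell_{uw}$ satisfy the strict triangle inequalities; hence they realise a nondegenerate geodesic triangle $\tau_f(r)$ carrying three circular arcs. Gluing the $\tau_f(r)$ along shared edges (whose lengths match by construction) yields a constant curvature cone metric $\mu_r$ on $S$, smooth away from $V$. Writing $\alpha_v^f(r)$ for the inner angle of $\tau_f(r)$ at $v$, define the combinatorial curvature $K_v(r)=2\pi-\sum_{f\ni v}\alpha_v^f(r)$. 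If $K(r^{*})=0$ then $\mu_{r^{*}}$ has no cone points, hence is a genuine constant curvature metric; since all cone angles equal $2\pi$ and the curvature is nonpositive, the universal cover is $\mathrm{CAT}(0)$, so the developing map is injective, $\mathcal T(\mu_{r^{*}})$ is an honest geodesic triangulation isotopic to $\mathcal T$ with the circle centres as vertices, and $(\mu_{r^{*}},\mathcal P=\{C_v\})$ is the desired $\mathcal T$-type circle pattern. In the Euclidean case one additionally normalises the total area, which is the source of the ``up to similarity'' clause.

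\textbf{Existence via convexity.} Pass to the coordinates $u_v=\log\tanh(r_v/2)\in(-\infty,0)$ in the hyperbolic case and $u_v=\log r_v\in\mathbb{R}$ in the Euclidean case. The classical local computation of how a triangle's angles vary with its radii shows that the Jacobian $\bigl(\partial K_v/\partial u_w\bigr)_{v,w\in V}$ is symmetric and positive semidefinite, with trivial kernel in the hyperbolic case and kernel exactly $\mathbb{R}(1,\dots,1)$ in the Euclidean case (reflecting scale invariance of Euclidean angles). Hence the $1$-form $\sum_{v\in V}K_v\,du_v$ is closed and equals $dF$ for a function $F$ whose Hessian is this Jacobian, so $F$ is convex --- strictly convex (hyperbolic case), respectively strictly convex modulo the scaling line (Euclidean case). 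A zero of $K$ is the same thing as a critical point, hence an interior minimiser, of $F$, so it suffices to prove $F$ is proper on its domain, equivalently that $0$ lies in the interior of the image of the curvature map. This is the single point at which the hypotheses on $\Theta$ are used: analysing $\mu_r$ as a proper nonempty subset of the radii tends to $0$ or $\infty$ (so a ``ring'' of faces collapses), one finds that $F$ can fail to be proper only when there is a null-homotopic loop of three edges with $\sum\Theta\ge\pi$ not bounding a triangle, or of four edges with $\sum\Theta=2\pi$ not bounding the union of two adjacent triangles --- precisely the configurations forbidden by $(i)$ and $(ii)$. Thus $F$ is proper and admits a minimiser $u^{*}$ with $K(u^{*})=0$.

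\textbf{Uniqueness.} Strict convexity and properness of $F$ make $u^{*}$, hence $r^{*}$, hence the pair $(\mu_{r^{*}},\mathcal P)$, unique: outright in the hyperbolic case, giving uniqueness up to isometry; and modulo the scaling direction in the Euclidean case, giving uniqueness up to similarity. Alternatively, and in keeping with the point of view of this paper, one may replace the minimisation by Chow--Luo's combinatorial Ricci flow $\dot u_v=-K_v(u)$, which is the negative gradient flow of $F$; properness of $F$ then yields long-time existence of the flow and convergence to the unique zero of $K$, recovering both existence and uniqueness at once.

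\textbf{Main obstacle.} The crux is the boundary analysis in the second step: one must control the degeneration of the triangles $\tau_f(r)$ and of the curvatures $K_v(r)$ as radii tend to $0$ or $\infty$, and extract from it exactly conditions $(i)$ and $(ii)$. This is the heart of Thurston's argument (see also the exposition of Marden and Rodin), and it is precisely the estimate that the later sections of the paper sharpen into a full description of the image of the curvature map.
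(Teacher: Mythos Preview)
The paper does not itself prove Theorem~\ref{T-0-1}; it is quoted as background due to Thurston, who (the paper remarks) used a continuity argument. Your proposal instead follows the variational/combinatorial Ricci flow route of Colin de Verdi\`{e}re and Chow--Luo, which is precisely the machinery the paper develops for its own generalizations (Theorems~\ref{T-0-4}, \ref{T-0-5}, \ref{T-0-7}, \ref{T-0-8}). In that sense your approach coincides with the paper's methodology, and Thurston's theorem is recoverable as the special case $\partial S=\emptyset$, $\Theta\le\pi/2$ of that framework.

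There is, however, one genuine gap in your sketch. You assert that $F$ fails to be proper only along degenerations corresponding to forbidden $3$- or $4$-cycles. But the boundary analysis (as in the paper's proof of Theorem~\ref{T-0-4}) actually produces, for \emph{every} nonempty $A\subsetneq V$, an obstruction inequality of the shape
\[
\sum_{(e,v)\in Lk(A)}\bigl(\pi-\Theta(e)\bigr)\;>\;2\pi\,\chi\bigl(G(A)\bigr),
\]
and $0$ lies in the image of the curvature map only if all of these hold. Two further reductions are then required. First, the inequality is automatic unless $G(A)$ is simply connected, which reduces to the case where the link of $A$ traces out a pseudo-Jordan curve of some length $s$; this is the combinatorial content of the paper's Proposition~\ref{P-2-9}. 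Second, one must observe that under the hypothesis $\Theta\le\pi/2$ the resulting condition $\sum_{l=1}^{s}\Theta(e_l)<(s-2)\pi$ is automatic for $s\ge 5$ (since $s\pi/2<(s-2)\pi$) and can fail for $s=4$ only at equality. Only after both steps do Thurston's hypotheses $(i)$ and $(ii)$ emerge as exactly sufficient. You correctly flag the boundary analysis as the crux, but the passage from ``all subsets $A$'' to ``short cycles'' is where the restriction $\Theta\le\pi/2$ is actually spent, and it should be made explicit; without it the argument does not close, and indeed for obtuse angles the paper must replace $(i)$, $(ii)$ by the stronger condition $\mathbf{(C2)}$ on cycles of arbitrary length.
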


One may ask whether we can relax the requirement of non-obtuse angles in the above theorem. Recently, it was partially resolved by Zhou \cite{Zhou-arxiv}. Let us introduce some terms. A closed (not necessarily simple) curve $\gamma$ in $S$ is called \textbf{pseudo-Jordan},  if $S\setminus \gamma$ contains a simply-connected component whose boundary is $\gamma$. For a \textbf{pseudo-Jordan} curve $\gamma$, an \textbf{enclosing set} $A$ of $\gamma$  consists of all vertices covered by
$\mathbb K$, where $\mathbb K$ is a simply-connected component of $S\setminus\gamma $  such that $\partial \mathbb K=\gamma$. {And we say $\gamma$ is \textbf{non-vacant} if one of its \textbf{enclosing sets} is non-empty}. Zhou's result \cite{Zhou-arxiv} is stated as follows.
\begin{theorem}\label{T-0-2}
Let $\mathcal T$ be a triangulation of an oriented closed surface $S$ of genus $g>1$. Suppose that $\Theta:E\to [0,\pi)$ is a function satisfying the following conditions:
\begin{itemize}
\item[$\mathbf{(C1)}$]If the edges $e_1,e_2,e_3$ form the boundary of a triangle of $\mathcal T$, then $ I(e_1)+I(e_2)I(e_3)\geq 0$, $I(e_2)+I(e_3)I(e_1)\geq 0$, $I(e_3)+I(e_1)I(e_2) \geq 0$, where $I(e_i)=\cos\Theta(e_i)$ for $i=1,2,3$.
\item[$\mathbf{(C2)}$]If the edges $e_1,e_2,\cdots,e_s$ form a \textbf{non-vacant} \textbf{pseudo-Jordan} curve in $S$, then $\sum_{l=1}^s\Theta(e_l)<(s-2)\pi$.
\end{itemize}
Then there exists a hyperbolic metric $\mu$ on $S$ such that  $(S,\mu)$  supports a $\mathcal T$-type  circle pattern $\mathcal P$ with the exterior intersection angles given by $\Theta$. Moreover, the pair $(\mu,\mathcal P)$ is unique up to isometries.
\end{theorem}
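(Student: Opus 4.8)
The plan is to recast the existence of the circle pattern as a variational problem over vertex radii, in the spirit of Colin de Verdi\`ere, Chow--Luo and Bobenko--Springborn, reading $\mathbf{(C1)}$ and $\mathbf{(C2)}$ as the conditions that govern, respectively, the interior and the boundary behaviour of the associated functional --- equivalently, the short- and long-time behaviour of Chow--Luo's combinatorial Ricci flow. To a radius vector $r=(r_v)_{v\in V}\in\mathbb{R}_{>0}^V$ one associates, for each triangle $f=uvw\in F$, the hyperbolic triangle $\Delta_f(r)$ whose side lengths are the centre-to-centre distances of circles of radii $r_u,r_v,r_w$ meeting pairwise at the angles prescribed by $\Theta$ (so $\cosh d_{uv}=\cosh r_u\cosh r_v+\sinh r_u\sinh r_v\cos\Theta(uv)$, and cyclically), and one defines the discrete curvature $K_v(r)=2\pi-\sum_{f\ni v}\theta^f_v(r)$, where $\theta^f_v$ is the inner angle of $\Delta_f(r)$ at $v$. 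Gluing the triangles $\Delta_f(r)$ along shared edges, a $\mathcal{T}$-type circle pattern with exterior intersection angles $\Theta$ on a smooth hyperbolic surface isotopic to $(S,\mathcal{T})$ is precisely a solution of $K(r)\equiv 0$; by Gauss--Bonnet such a surface automatically has area $2\pi(2g-2)$.

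First I would check that $\mathbf{(C1)}$ is exactly the condition guaranteeing that $\Delta_f(r)$ is a nondegenerate hyperbolic triangle for \emph{every} positive radius assignment, so that the configuration space of radii is the full orthant $\mathbb{R}_{>0}^V$; this is Thurston's generalized triangle inequality extended to obtuse $\Theta$, and it amounts to checking that $\theta^f_v$ stays in $(0,\pi)$, the equality cases in $\mathbf{(C1)}$ being absorbed by approximation. Passing to the variables $u_v=\log\tanh(r_v/2)\in(-\infty,0)$ and invoking a Schl\"afli-type identity for hyperbolic triangles, one gets $\partial\theta^f_u/\partial u_v=\partial\theta^f_v/\partial u_u$; hence $\sum_v K_v\,du_v$ is a closed $1$-form on $(-\infty,0)^V$ whose primitive is a smooth strictly convex function $\mathcal{F}$ with $\nabla\mathcal{F}=K$. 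Strict convexity --- positive definiteness of the Jacobian $(\partial K_v/\partial u_w)$, which under $\mathbf{(C1)}$ is strictly diagonally dominant with positive diagonal --- is the infinitesimal rigidity statement, provable by a discrete maximum principle on $\mathcal{T}$ exactly as in Thurston's lemma for non-obtuse patterns; it yields the uniqueness clause immediately, since a strictly convex function has at most one critical point and, for $g>1$, the hyperbolic structure admits no residual scaling.

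It then remains to prove that $\mathcal{F}$ attains an interior minimum, equivalently that $\mathcal{F}$ is proper on $(-\infty,0)^V$, equivalently that the flow $\dot u_v=-K_v$ (gradient descent for $\mathcal{F}$) converges. Near the boundary faces $\{u_v=0\}$ (i.e.\ $r_v\to\infty$) one has $\theta^f_v\to 0$, hence $K_v\to 2\pi>0$, so $\mathcal{F}$ increases in the outward direction there and a minimizing sequence cannot escape that way; by a standard convexity argument the only possible obstruction is that, along a minimizing sequence or the flow, the radii of the vertices in a nonempty proper subset $A\subsetneq V$ tend to $0$. Here $\mathbf{(C2)}$ enters. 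Using the collapse asymptotics --- when a circle $C_w$, $w\in A$, shrinks to a point, the triangle $\Delta_f$ on an edge $e=u'u''$ with $u',u''\notin A$ degenerates so that its angle at $w$ tends to $\pi-\Theta(e)$ --- one extracts from the combinatorics of $\mathcal{T}$ around $A$, after pruning the boundary triangles all of whose vertices lie on the cut, a pseudo-Jordan curve $\gamma=e_1\cup\cdots\cup e_s$ (not necessarily simple, whence the \emph{pseudo}-Jordan notion) one of whose enclosing sets contains $A$, so that $\gamma$ is non-vacant; summing the relations $K_w=0$ over $w\in A$ and applying Gauss--Bonnet to the disk bounded by $\gamma$ then forces, in the limit, $\sum_{l=1}^s\Theta(e_l)=(s-2)\pi$. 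Since $\sum_l\Theta(e_l)$ is fixed data, this contradicts the \emph{strict} inequality of $\mathbf{(C2)}$. (Total collapse $A=V$ is instead excluded by $\sum_v K_v\to 2\pi\chi(S)<0$.) Convexity, rigidity and properness together produce the unique $r$ with $K(r)\equiv 0$, hence the pair $(\mu,\mathcal{P})$; when $\Theta\le\pi/2$ the argument reduces to Thurston's proof of Theorem \ref{T-0-1} in the range $g>1$.

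The hard part is exactly this last argument: turning an analytic degeneration of the radii into the sharp combinatorial conclusion that some non-vacant pseudo-Jordan curve saturates $\mathbf{(C2)}$. In the non-obtuse setting only curves of length $3$ or $4$ occur and the bookkeeping is elementary (it is subsumed in conditions $(i)$--$(ii)$ of Theorem \ref{T-0-1}); with obtuse angles the collapsing set $A$ can be arbitrarily large and topologically intricate, so one must (i) correctly identify the disk cut out around $A$ and its bounding pseudo-Jordan curve --- handling the non-simple, disconnected and non-simply-connected cases as well as the pruning step --- and (ii) run the limiting angle analysis for hyperbolic triangles with one, two, or three collapsing circles so as to land on the equality $\sum_l\Theta(e_l)=(s-2)\pi$ rather than a weaker bound. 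Everything else --- the inequalities behind $\mathbf{(C1)}$, the Schl\"afli computation, and the maximum-principle proof of infinitesimal rigidity --- is by now standard once this compactness mechanism is in place.
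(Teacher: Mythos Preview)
Your proposal is correct and is essentially the approach the paper takes (the paper attributes the original proof of this theorem to Zhou via topological degree theory, and then reproves it as the boundaryless special case of its own variational/flow machinery). The paper packages the endgame slightly differently: rather than arguing properness of $\mathcal{F}$ directly against $\mathbf{(C2)}$, it first characterizes the full image of the curvature map by the inequalities $\sum_{v\in A} K_v > -\sum_{(e,v)\in Lk(A)}(\pi-\Theta(e)) + 2\pi\,\chi(G(A))$ over all nonempty $A\subset V$ (Theorem~\ref{T-0-4}, proved by the continuity method using exactly your asymptotics at $r_v\to 0$ and $r_v\to\infty$), and then checks combinatorially (Proposition~\ref{P-2-9}, a special case of Proposition~\ref{P-4-3}) that the origin satisfies all of these iff $\mathbf{(C2)}$ holds. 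In that combinatorial step the non-simply-connected cases of $G(A)$ are disposed of trivially since then $\chi(G(A))\le 0$; this Euler-characteristic bookkeeping is the clean replacement for your ``pruning'' of the collapse region down to a disk bounded by a pseudo-Jordan curve.

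One imprecision worth fixing: you cannot ``sum the relations $K_w=0$ over $w\in A$'', since those relations are precisely what you are trying to establish. The correct mechanism is that along a degeneration with $r_w\to 0$ for $w\in A$, the quantity $\sum_{w\in A}K_w$ tends to the right-hand side displayed above (this is Lemma~\ref{L-1-5} plus Lemma~\ref{L-2-5}); for $\mathcal{F}$ to fail to be proper in that direction one would need this limit to be $\le 0$, which after the reduction to simply-connected $G(A)$ reads $\sum_l\Theta(e_l)\ge(s-2)\pi$ and contradicts the strict inequality in $\mathbf{(C2)}$.
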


We mention that if  $\Theta: E\to [0,\pi)$ satisfies  $\sum_{l=1}^3\Theta(e_l)\leq \pi$ for any three edges $e_1,e_2,e_3$ forming the boundary of a triangle of $\mathcal T$, then \textbf{(C1)} holds (see Remark \ref{R-1-2} in Section \ref{Sec1}). This gives a family of examples possessing possibly obtuse exterior intersection angles for the above theorem. Thurston  proved his theorem via continuity method. The main tool of Zhou's approach to Theorem \ref{T-0-2} is topological degree theory.

\subsection{Main results}

It is of interest to study circle patterns on surfaces with boundary. Suppose that $S$ is of topological type $(g,n)$, i.e. $S$ is a compact oriented surface of genus $g$  whose boundary consists of $n$ disjoint simple closed curves. Let $\chi(\cdot)$ denote the Euler characteristic of a surface, which yields that $\chi(S)=2-2g-n$. A major purpose of this paper is to consider the analogous results to Theorem \ref{T-0-2} in case that $n\geq 0$.

An arc in $S$ is called \textbf{semi pseudo-Jordan} if there exists an open arc belonging to the boundary such that their union is a \textbf{pseudo-Jordan} curve in $S$. For a \textbf{semi pseudo-Jordan} arc $\gamma$, a \textbf{semi enclosing set} $W$ of $\gamma$ consists of all vertices covered by $\lambda\cup\mathbb K$, where $\lambda$ is an open arc in $\partial S$ and $\mathbb K$ is a simply-connected component of $ S\setminus\big(\gamma\cup\lambda\big)$ such that $\partial \mathbb K=\gamma\cup\lambda$. And $\gamma$ is called \textbf{non-vacant} if one of its \textbf{semi enclosing sets} is non-empty. We have the following result.

\begin{theorem}\label{T-0-3}
 Let $\mathcal T$ be a triangulation of a surface $S$ of topological type $(g,n)$ such that $\chi(S)\leq 0$. Suppose that $\Theta:E\to [0,\pi)$ satisfies $\mathbf{(C1)}$, $\mathbf{(C2)}$ and the following condition:
\begin{itemize}
\item[$\mathbf{(C3)}$] If the edges $e_1,e_2,\cdots,e_s$ form a \textbf{non-vacant} \textbf{semi pseudo-Jordan} arc in $S$, then
    $\sum_{l=1}^s\Theta(e_l)<(s-1)\pi$.
\end{itemize}
Then there exists a constant curvature (equal to $0$ for $\chi(S)=0$ and equal to $-1$ for $\chi(S)<0$) metric $\mu$ on $S$ such that $(S,\mu)$ has totally geodesic boundary and supports a $\mathcal T$-type circle pattern $\mathcal P$ with the exterior intersection angles given by $\Theta$. Moreover, the pair $(\mu,\mathcal P)$ is unique up to isometries if $\chi(S)<0$, and up to similarities if $\chi(S)=0$.
\end{theorem}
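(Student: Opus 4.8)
The plan is to prove the result by doubling $S$ across its boundary and quoting the closed-surface case. Write $\widehat S=S\cup_{\partial S}S'$ for the double of $S$ along $\partial S$, where $S'$ is a second copy of $S$, equipped with the orientation-reversing involution $\tau$ exchanging the two copies, whose fixed-point set is the image of $\partial S$ and with $\widehat S/\tau=S$; let $\pi\colon\widehat S\to S$ denote the quotient map. The triangulation $\mathcal T$ doubles to a $\tau$-invariant triangulation $\widehat{\mathcal T}$ of $\widehat S$ (two copies of $\mathcal T$ glued along their boundary simplices, so interior simplices occur in $\tau$-pairs and boundary simplices are $\tau$-fixed), and $\Theta$ pulls back to a $\tau$-invariant function $\widehat\Theta\colon\widehat E\to[0,\pi)$ with $\widehat\Theta=\Theta\circ\pi$ on edges. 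Since $\chi(\widehat S)=2\chi(S)$, the closed surface $\widehat S$ has genus $>1$ when $\chi(S)<0$ and is a torus when $\chi(S)=0$.

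First I would verify that $\widehat\Theta$ fulfils the hypotheses of the closed circle pattern theorem: Theorem \ref{T-0-2} when $\chi(S)<0$, and its Euclidean analogue on the torus (contained in Theorem \ref{T-0-1} when $\Theta\le\pi/2$) when $\chi(S)=0$. Condition $\mathbf{(C1)}$ transfers verbatim, every triangle of $\widehat{\mathcal T}$ being an isometric copy of a triangle of $\mathcal T$ carrying the same three angle values. The substance is $\mathbf{(C2)}$ for $\widehat\Theta$: for a non-vacant pseudo-Jordan curve $\widehat\gamma=\widehat{e}_1\cup\cdots\cup\widehat{e}_s$ in $\widehat S$ bounding a simply connected region $\widehat{\mathbb K}$ with $\partial\widehat{\mathbb K}=\widehat\gamma$, one must show $\sum_{l=1}^s\widehat\Theta(\widehat{e}_l)<(s-2)\pi$, and I would argue according to how $\widehat\gamma$ meets the ``equator'' $\partial S\subset\widehat S$. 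If $\widehat{\mathbb K}$ lies in the interior of one of the two copies of $S$, then $\pi$ is injective on $\widehat\gamma$ and $\pi(\widehat\gamma)$ is a non-vacant pseudo-Jordan curve in $S$, so the bound is $\mathbf{(C2)}$ for $S$. Otherwise $\widehat\gamma$ crosses $\partial S$; cutting $\widehat{\mathbb K}$ along the nonempty properly embedded $1$-manifold $\partial S\cap\widehat{\mathbb K}$ splits it into $\ge 2$ disks, alternately coloured by whether they sit in $S$ or $S'$, and after regrouping their $\widehat\gamma$-subarcs and unfolding the $S'$-pieces by $\tau$ one is reduced --- possibly after a further subdivision --- to a family of semi pseudo-Jordan arc estimates in $S$. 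Then $\mathbf{(C3)}$ supplies one ``$-1$'' per semi pseudo-Jordan arc, the $\ge 2$ pieces created by the cut account for the two units needed to reach ``$-2$'', and the non-vacancy of $\widehat\gamma$ forces at least one $\mathbf{(C3)}$ inequality to be strict; a careful edge count then gives the required estimate. Reconciling the combinatorics of these cuts with the arithmetic of $(s-2)\pi$ is the step I expect to be the main obstacle. It is also where $\mathbf{(C3)}$ becomes unavoidable: the $\tau$-symmetric pseudo-Jordan curves $\gamma\cup\tau(\gamma)$ of $\widehat S$ are exactly the doubles of semi pseudo-Jordan arcs of $S$, and $\mathbf{(C2)}$ for such a curve is literally $\mathbf{(C3)}$.

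Granting this, the closed theorem produces a constant curvature metric $\widehat\mu$ on $\widehat S$ supporting a $\widehat{\mathcal T}$-type circle pattern $\widehat{\mathcal P}$ with exterior intersection angles $\widehat\Theta$, unique up to isometry, or up to similarity in the flat case. Because $\widehat\Theta$ is $\tau$-invariant, $(\tau^*\widehat\mu,\tau(\widehat{\mathcal P}))$ is again such a datum, so uniqueness forces $\tau$ to be an isometry (resp. similarity) of $(\widehat S,\widehat\mu)$ preserving $\widehat{\mathcal P}$; hence $\operatorname{Fix}(\tau)=\partial S$ is totally geodesic, $\widehat\mu$ descends to a constant curvature metric $\mu$ on $S$ with totally geodesic boundary, and $\widehat{\mathcal P}$ descends to a $\mathcal T$-type circle pattern $\mathcal P$ on $(S,\mu)$ with exterior intersection angles $\Theta$, circles centred on $\partial S$ descending to circles meeting the boundary orthogonally. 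Uniqueness of $(\mu,\mathcal P)$ up to isometry, resp. similarity, then follows by doubling two competing solutions on $S$ and invoking the closed uniqueness statement, the identifying isometry being chosen $\tau$-equivariant.

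I would also pursue the doubling-free route indicated by the abstract: run Chow--Luo's combinatorial Ricci flow directly on $S$, whose fixed points are the circle patterns sought. Here $\mathbf{(C1)}$ makes the space of admissible circle-radius assignments all of $\mathbb R^{V}_{>0}$; the curvature map, sending a radius assignment to the vector with entries $2\pi-(\text{angle sum at }v)$ at interior vertices $v$ and $\pi-(\text{angle sum at }v)$ at boundary vertices $v$, is the gradient of a strictly convex functional (of Bobenko--Springborn, Colin de Verdi\`ere, Chow--Luo type), which gives the uniqueness assertions, and its image is the open convex region cut out by the inequalities dual to $\mathbf{(C1)}$, $\mathbf{(C2)}$, $\mathbf{(C3)}$ --- with $\mathbf{(C3)}$ precisely the obstruction coming from blow-up loci of the flow that abut $\partial S$. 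The target curvature $0$, which encodes simultaneously the absence of cone points and the total geodesy of the boundary, then lies in this region under the three hypotheses, so the flow converges to the desired $(\mu,\mathcal P)$. In either approach the crux is the same: showing that $\mathbf{(C1)}$, $\mathbf{(C2)}$, $\mathbf{(C3)}$ jointly exclude every degeneration, the boundary degenerations controlled by $\mathbf{(C3)}$ being the genuinely new ingredient.
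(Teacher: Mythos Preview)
The paper follows your second route, not the doubling route: it develops Thurston's curvature map and the Chow--Luo flow directly on $S$ with boundary, defining the curvature at a boundary vertex as $\pi$ minus the cone angle. The image of the curvature map is characterised by a family of linear inequalities indexed by subsets $A\subset V$ (Theorems~\ref{T-0-4} and~\ref{T-0-5}), uniqueness comes from strict convexity of the Colin de Verdi\`ere energy, and a purely combinatorial argument (Proposition~\ref{P-2-9}, itself a special case of Proposition~\ref{P-4-3}) shows that the condition $H_3$ placing the origin in the image is equivalent to $\mathbf{(C2)}\wedge\mathbf{(C3)}$. Your sketch of this route is accurate in outline; the paper fills in the details, in particular the continuity/properness argument for the image and the Euler-characteristic bookkeeping (Lemma~\ref{L-2-5}) linking the subset inequalities to $\mathbf{(C2)}$ and $\mathbf{(C3)}$.

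Your doubling approach is a genuine alternative, but as written it has two gaps. First, when $\chi(S)=0$ the double is a torus, and neither Theorem~\ref{T-0-1} (which needs $\Theta\le\pi/2$) nor Theorem~\ref{T-0-2} (which needs $g>1$) covers it; an obtuse-angle circle-pattern theorem on the torus is precisely one of the things the paper is proving (it is the $n=0$ case of Theorems~\ref{T-0-5}/\ref{T-0-8}), so at that point the doubling detour is circular. Second, even in the hyperbolic case, the verification of $\mathbf{(C2)}$ on $\widehat S$ is, as you say, the crux and is only sketched: pseudo-Jordan curves in $\widehat S$ need not be simple, may meet the equator in arcs rather than transverse points, and the ``one $-1$ per piece'' count has to be reconciled carefully with the possibility that some pieces of $\widehat{\mathbb K}$ are vacant. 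This can be made to work, but it is not shorter than the direct argument; the paper's route has the further advantage of yielding the full description of the curvature-map image and the exponential convergence of the flow as byproducts.
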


To obtain the proof, let us introduce Thurston's construction for circle patterns \cite[Chap. 13]{T1}. Recall that $V,E,F$ are the sets of vertices, edges, and triangles of  $\mathcal T$. Assume that $V=\big\{v_1,\cdots,v_{|V|}\big\}$. We start with a radius vector $r=(r_1,\cdots,r_{|V|})\in \mathbb R_{+}^{|V|}$, which assigns each vertex $v_i$ a positive number $r_i$. A radius vector  produces a hyperbolic (or Euclidean) cone metric on  $S$ as follows.

For each combinatorial triangle $\bigtriangleup (v_iv_jv_k)$ of $\mathcal T$, one associates it with a hyperbolic (or Euclidean) triangle formed by centers of three hyperbolic circles/disks of radii $r_{i}, r_{j}, r_{k}$ with exterior intersection angles $\Theta([v_i,v_j]), \Theta([v_j,v_k]),\Theta([v_k,v_i])$. More precisely, let $l_{ij}, l_{jk},l_{ki}$ be the three lengths of this triangle. Then
\[
l_{ij}\ =\ \cosh^{-1}\big(\cosh r_{i}\cosh r_{j}+\sinh r_{i}\sinh r_{j}\cos\Theta([v_i,v_j])\big)
\]
in hyperbolic background geometry, or
\[
l_{ij}\,=\,\sqrt{r_i^2+r_j^2+2r_ir_j\cos\Theta([v_i,v_j])}
\]
in Euclidean background geometry. Similarly, we obtain $l_{jk}$, $l_{ki}$. Under the condition \textbf{(C1)}, for any three positive numbers $r_{i}, r_{j}, r_{k}$, the corresponding $l_{ij}, l_{jk}, l_{ki}$
satisfy the triangle inequalities (see Lemma \ref{L-1-1} in Section \ref{Sec1}). Thus the above procedure works well.

Gluing all these hyperbolic (or Euclidean) triangles along the common edges produces a hyperbolic (or Euclidean) cone metric on $S$ with possible cone singularities at vertices of $\mathcal T$. Let $V_\partial\subset V$ be the set of boundary vertices. For each $v_i\in V$, the vertex curvature $K_i$ is defined as follows:
\[K_i:=\left\{\begin{array}{ll}2\pi-\sigma (v_i), &v_i\in V\setminus V_\partial,\\
\pi-\sigma (v_i), &v_i\in V_\partial.
\end{array}\right.\]
Here $\sigma(v_i)$ denotes the cone angle at $v_i$, which is equal to the sum of inner angles at $v_i$ for all triangles incident to $v_i$. Clearly,  $K_1,\cdots, K_{|V|}$ are smooth functions of $r$. This gives rise to the following curvature map
\[
\begin{aligned}
Th(\cdot):\quad\qquad &\,\mathbb{R}_{+}^{|V|} \qquad
&\,\to \,\qquad \qquad \quad &\,\mathbb{R}^{|V|} \\
\big(r_1,r_2,&\cdots,r_{|V|}\big)&\mapsto\;\,\ \quad\big(K_1, K_2,&\cdots,K_{|V|}\big).\\
  \end{aligned}
  \]

The aim is to show that the origin $(0,0,\cdots,0)$ belongs to the image of the map $Th$. If there exists a radius vector $r^\ast$ such that  $K_i(r^\ast)=0$ for $i=1,2,\cdots,|V|$, then it produces a smooth hyperbolic (or Euclidean) metric on $S$. Drawing the circle centered at $v_i$ of radius $r^{\ast}_i$ for each $v_i$, one  obtains the desired circle pattern realizing $(\mathcal T,\Theta)$.

For any non-empty subset $A$ of $V$,  we denote by $G(A)$ the union of $j$-cells ($j=0,1,2$) of $\mathcal T$ that have at least one vertex in $A$, and by $Lk(A)$ the set of pairs $(e,v)$ of an edge $e$ and a vertex $v$ with the following properties:
 $$(i)\ v\in A;\quad (ii)\ \partial e\cap A=\emptyset;\quad (iii)\ e \ \mathrm{and}\ v \mathrm{\ form\  a\  triangle\ of\ } \mathcal T.$$
The following results give a complete description of the images of the curvature maps.

\begin{theorem}\label{T-0-4}
Assume that $\Theta:E\to [0,\pi)$ satisfies  $\mathbf{(C1)}$. In hyperbolic background geometry, the curvature map is injective. Moreover, the image of this map consists of vectors $(K_1,K_2,\cdots,K_{|V|})$ satisfying
\begin{equation}\label{E-1}
K_i\,<\,\left\{\begin{array}{ll}
2\pi,&v_i\in V\setminus V_\partial,\\
\pi,&v_i\in V_\partial,
\end{array}\right.
\end{equation}
and
\begin{equation}\label{E-2}
\sum\nolimits_{v_i\in A}K_i\,>\,-\sum\nolimits_{(e,v)\in Lk(A)}\big(\pi-\Theta(e)\big)+2\pi\chi(G(A)\setminus \partial S)+\pi \chi(G(A)\cap\partial S)
\end{equation}
for any non-empty subset $A$ of $V$.
\end{theorem}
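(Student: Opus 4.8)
The plan is to treat $Th$ by a variational principle together with a degeneration (properness) analysis: first show that $Th$ is injective, and then that its image is exactly the region $\mathcal{A}\subset\mathbb{R}^{|V|}$ cut out by \eqref{E-1}--\eqref{E-2}.

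\emph{Injectivity.} First I would change variables to $u_i=\log\tanh(r_i/2)$, which identifies $\mathbb{R}_+^{|V|}$ with the open convex set $\Omega=(-\infty,0)^{|V|}$; by $\mathbf{(C1)}$ every radius vector produces genuine non-degenerate hyperbolic triangles, so $Th$ is smooth on all of $\Omega$. The key point is that for each triangle $\bigtriangleup(v_iv_jv_k)$ the $3\times 3$ matrix $\big(\partial\alpha^{\bigtriangleup}_a/\partial u_b\big)_{a,b\in\{i,j,k\}}$ of partials of its inner angles is symmetric and negative definite; this is a Schl\"afli-type variational identity together with the monotonicity guaranteed by $\mathbf{(C1)}$, and it is exactly where hyperbolic background geometry is used (in the Euclidean case this local form is only negative semi-definite, with the scaling direction $(1,1,1)$ in its kernel, which is why Theorem~\ref{T-0-4} does not assert injectivity there). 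Since every vertex of $\mathcal{T}$ lies in some triangle and $K_i=c_i-\sum_{\bigtriangleup\ni v_i}\alpha^{\bigtriangleup}_i$ with $c_i\in\{2\pi,\pi\}$, summing the local forms shows that the Jacobian $\big(\partial K_i/\partial u_j\big)$ is symmetric and negative definite on $\Omega$. Hence, in $u$-coordinates, $Th$ is the gradient of a strictly concave function, and integrating along the segment joining any two distinct points gives $\langle Th(u)-Th(u'),\,u-u'\rangle<0$, so $Th$ is injective.

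\emph{Necessity of \eqref{E-1}--\eqref{E-2}.} Inequality \eqref{E-1} is immediate, since each cone angle $\sigma(v_i)$ is a sum of strictly positive inner angles over a non-empty family of triangles. For \eqref{E-2}, fix a non-empty $A\subseteq V$ and write $\sum_{v_i\in A}K_i = 2\pi\,\#(A\setminus V_\partial)+\pi\,\#(A\cap V_\partial)-\sum_{\bigtriangleup}\theta_A(\bigtriangleup)$, where $\theta_A(\bigtriangleup)$ is the sum of the inner angles of $\bigtriangleup$ at its vertices lying in $A$. I would then bound $\theta_A$ triangle by triangle: for a triangle with two or three vertices in $A$ one uses the hyperbolic angle-sum bound $\theta_A(\bigtriangleup)<\pi$ (strict, by positivity of area); for a triangle with exactly one vertex $v\in A$ --- equivalently a pair $(e,v)\in Lk(A)$ with $e$ the opposite edge --- one uses the elementary law-of-cosines estimate that the inner angle of any triangle of the construction at a vertex is strictly less than $\pi$ minus the exterior intersection angle of the opposite edge. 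Summing these bounds and carrying out the Euler-characteristic bookkeeping for $G(A)$ (separating the part lying on $\partial S$) yields precisely the right-hand side of \eqref{E-2}; equivalently and more transparently, one applies the Gauss--Bonnet formula to the sub-complex $G(A)$ with its induced hyperbolic cone metric, the terms $2\pi\chi(G(A)\setminus\partial S)+\pi\chi(G(A)\cap\partial S)$ being the interior and boundary Euler contributions and $-\sum_{(e,v)\in Lk(A)}(\pi-\Theta(e))$ absorbing the corner terms of $\partial G(A)$ at vertices outside $A$, with positivity of area and the opposite-edge estimate supplying strictness.

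\emph{From necessity to the image, and the main obstacle.} Let $\mathcal{A}$ denote the set of vectors satisfying \eqref{E-1}--\eqref{E-2}; being an intersection of open half-spaces it is open and convex, hence connected, and it is non-empty since it contains $U:=Th(\Omega)$ by the previous step. By injectivity and non-degeneracy of the Jacobian, $Th$ is a diffeomorphism of $\Omega$ onto the open set $U$, so it suffices to show $U$ is closed in $\mathcal{A}$, i.e.\ that $Th\colon\Omega\to\mathcal{A}$ is proper: then $U$ is open, closed and non-empty in the connected set $\mathcal{A}$, forcing $U=\mathcal{A}$. Properness is the step I expect to be the real obstacle. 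One must show that if coordinates of $u^{(n)}$ escape $\Omega$ then $Th(u^{(n)})$ leaves every compact subset of $\mathcal{A}$. If $r_i^{(n)}\to\infty$, a law-of-cosines estimate gives $\alpha_i^{\bigtriangleup}\to 0$ in every triangle at $v_i$, so $K_i^{(n)}\to 2\pi$ (or $\pi$ at $\partial S$), i.e.\ \eqref{E-1} degenerates. If $r_i^{(n)}\to 0$ for $i$ in a non-empty set, one selects a suitable ``first-collapsing'' subset $A$ (for instance the vertices whose radii decay fastest) and shows, by analysing the degenerating triangles incident to $A$ --- those with all vertices in $A$ collapse and contribute angle sums $\to\pi$, those with a single vertex in $A$ contribute an angle $\to\pi-\Theta(e)$ at the opposite edge --- that $\sum_{v_i\in A}K_i^{(n)}$ tends to the right-hand side of \eqref{E-2} for that $A$; since on a compact subset of $\mathcal{A}$ this sum is bounded away from that value, one obtains a contradiction. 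Organising this degeneration analysis cleanly --- handling nested collapses and the interaction with $\partial S$ --- is where the bulk of the effort lies; everything else (convexity of $\mathcal{A}$, the diffeomorphism argument, the Gauss--Bonnet bookkeeping) is either soft or routine.
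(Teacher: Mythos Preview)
Your proposal is correct and takes essentially the same approach as the paper: the change of variables $u_i=\ln\tanh(r_i/2)$ and definiteness of the Jacobian for injectivity, the triangle-by-triangle angle bounds (Lemma~\ref{L-1-4}) together with Euler-characteristic bookkeeping (Lemma~\ref{L-2-5}) for necessity, and the open--closed--connected argument via properness (Lemmas~\ref{L-1-5}--\ref{L-1-6}) for surjectivity. Two small remarks: first, since $K_i=c_i-\sum_\triangle\alpha_i^\triangle$ and the local angle Jacobian is negative definite, the curvature Jacobian $(\partial K_i/\partial u_j)$ is \emph{positive} definite and $Th=\nabla\Phi$ with $\Phi$ strictly \emph{convex} (so the monotonicity inequality reads $>0$), though this sign slip does not affect the injectivity conclusion; second, the properness step is simpler than you anticipate --- after passing to a subsequence so that each $r_i^{(n)}$ converges in $[0,\infty]$, one simply takes $A=\{v_i:r_i^{(n)}\to 0\}$ and the three-circle degeneration lemmas handle every incident triangle uniformly, with no need to isolate a ``first-collapsing'' subset or worry about nested rates of decay.
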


\begin{theorem}\label{T-0-5}
Assume that $\Theta:E\to [0,\pi)$ satisfies $\mathbf{(C1)}$. In Euclidean background geometry, the curvature map is injective up to scalings. Moreover, the image of this map consists of vectors $(K_1,K_2,\cdots,K_{|V|})$ satisfying $(\ref{E-1})$ and
\begin{equation}\label{E-3}
\sum\nolimits_{v_i\in A}K_i\,\geq \,-\sum\nolimits_{(e,v)\in Lk(A)}\big(\pi-\Theta(e)\big)+2\pi\chi(G(A)\setminus \partial S)+\pi \chi(G(A)\cap\partial S)
\end{equation}
for any non-empty subset $A$ of $V$, where the equality holds if and only if $A=V$.
\end{theorem}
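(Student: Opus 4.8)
The plan is to run the same argument as for the hyperbolic curvature map in Theorem~\ref{T-0-4}, with the modifications forced by the fact that in the Euclidean background a global rescaling $r\mapsto\lambda r$ ($\lambda>0$) preserves every ratio of edge lengths, hence every inner angle and every $K_i$; thus $Th$ descends to $\mathbb R_+^{|V|}/\mathbb R_+$ and one works on the slice $\Sigma=\{r\in\mathbb R_+^{|V|}:\sum_i\log r_i=0\}$, of dimension $|V|-1$. Writing $u_i=\log r_i$, a direct differentiation of the Euclidean cosine law gives, within each triangle, the symmetry $\partial\theta_i^{jk}/\partial u_j=\partial\theta_j^{ik}/\partial u_i$ together with $\partial\theta_i^{jk}/\partial u_i<0$ and $\partial\theta_i^{jk}/\partial u_j>0$ for $j\ne i$ (the Euclidean analogue of the computation behind Theorem~\ref{T-0-4}); summing over the triangles of $\mathcal T$ one finds that the Jacobian $\big(\partial K_i/\partial u_j\big)$ is symmetric, positive semidefinite, with row sums zero and kernel exactly $\mathbb R\mathbf 1$. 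Since this Jacobian is symmetric, $Th=\nabla_u\mathcal F$ for some $\mathcal F:\mathbb R^{|V|}\to\mathbb R$, which by the above is convex and strictly convex transverse to $\mathbf 1$; hence $Th$ is injective up to scalings and $Th|_\Sigma$ has nonsingular derivative at every point. (Alternatively, injectivity and part of the necessity below can be obtained from the small-radius limit $r=\varepsilon\rho$, $\varepsilon\to0$, of the hyperbolic statement in Theorem~\ref{T-0-4}.)

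For necessity, (\ref{E-1}) is immediate, since every inner angle of a nondegenerate Euclidean triangle is positive, so $\sigma(v_i)>0$. For (\ref{E-3}), fix $\emptyset\ne A\subseteq V$ and apply the flat Gauss--Bonnet relation to the subcomplex $G(A)$, whose boundary consists of geodesic edges of $\mathcal T$: the total of all inner angles of the triangles in $G(A)$ equals $\pi$ times their number, and rewriting this count through the Euler characteristic of $G(A)$ expresses $\sum_{v_i\in A}K_i$ in terms of the angle sums at the vertices of $G(A)$ lying outside $A$, the relevant corner pairs being exactly those in $Lk(A)$, plus the terms $2\pi\chi(G(A)\setminus\partial S)+\pi\chi(G(A)\cap\partial S)$. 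The key single-triangle estimate---for a triangle with a vertex $v\in A$ opposite to an edge $e$ whose endpoints lie outside $A$, the angle at $v$ is strictly less than $\pi-\Theta(e)$, with equality only in the degenerate limit $r_v\to0$ with the two opposite radii bounded---then promotes this identity to the strict inequality (\ref{E-3}) for every $A\subsetneq V$. When $A=V$ one has $Lk(V)=\emptyset$ and $G(V)=S$, so (\ref{E-3}) reduces to the global flat Gauss--Bonnet identity $\sum_i K_i=2\pi\chi(S\setminus\partial S)+\pi\chi(S\cap\partial S)$, which holds with equality for every $r$; this explains the clause ``equality holds if and only if $A=V$''.

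It remains to show that the image $\mathcal I=Th(\Sigma)$ equals the region $\mathcal R$ cut out by (\ref{E-1}) and (\ref{E-3}). By the last step, $\mathcal R$ lies in the hyperplane $H=\{\sum_i K_i=2\pi\chi(S\setminus\partial S)+\pi\chi(S\cap\partial S)\}$ and, inside $H$, is the intersection of the finitely many open half-spaces given by (\ref{E-1}) and by (\ref{E-3}) for $A\subsetneq V$, so $\mathcal R$ is open, convex and nonempty in $H$. Also $\mathcal I\subseteq\mathcal R$, and $\mathcal I$ is open in $H$ because $Th|_\Sigma:\Sigma\to H$ is a local diffeomorphism. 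Hence it suffices to prove $\mathcal I$ is closed in $\mathcal R$, for then $\mathcal I=\mathcal R$ by connectedness of $\mathcal R$. Take $r^{(k)}\in\Sigma$ with $Th(r^{(k)})\to\bar K\in\mathcal R$; after rescaling so that $\max_i r_i^{(k)}=1$ and passing to a subsequence, set $A=\{i:r_i^{(k)}\to0\}$. If $A=\emptyset$, then all $r_i^{(k)}$ stay in a fixed compact subset of $(0,\infty)$, so $r^{(k)}$ subconverges in $\Sigma$ and $\bar K\in\mathcal I$. If $A\ne\emptyset$, then $A\ne V$ (the maximum is attained), and one computes $\lim_k\sum_{v_i\in A}K_i(r^{(k)})$ triangle by triangle via the Euclidean cosine law: a triangle of $G(A)$ with all three vertices in $A$ contributes exactly $\pi$ to $\sum_{v_i\in A}\sigma(v_i)$; one with exactly two vertices in $A$ contributes $\pi$ minus the angle at the third vertex, which tends to $0$ since its opposite side shrinks while the adjacent ones stay bounded away from $0$; and one with exactly one vertex $v\in A$ contributes the angle at $v$, which tends to $\pi-\Theta(e)$ with $e$ the opposite edge. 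Collecting these limits and invoking the Euler-characteristic bookkeeping of $G(A)$ shows that $\lim_k\sum_{v_i\in A}K_i(r^{(k)})$ equals precisely the right-hand side of (\ref{E-3}) for this $A$, contradicting the strict inequality in (\ref{E-3}) for $A$, which holds because $\bar K\in\mathcal R$ and $A\subsetneq V$. Therefore $A=\emptyset$ always, $\mathcal I$ is closed in $\mathcal R$, and $\mathcal I=\mathcal R$.

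I expect the main obstacle to be the closedness step: choosing the degenerating set $A$, pinning down each inner angle in the limit through the ``vanishing vertex'' asymptotics of the Euclidean cosine law, and checking that the resulting combinatorial identity reproduces the right-hand side of (\ref{E-3}) \emph{exactly}, so as to collide with its strict form. Equivalently, the delicate point---shared with Theorem~\ref{T-0-4}---is the Euler-characteristic bookkeeping turning the Gauss--Bonnet sum over $G(A)$ into precisely the $Lk(A)$ and $\chi(G(A)\setminus\partial S)$, $\chi(G(A)\cap\partial S)$ terms, together with the verification that (\ref{E-3}) is strict for every proper $A$, the slack being either geometric (from the strict triangle estimate) or purely topological (from the cells of the link absent from $G(A)$).
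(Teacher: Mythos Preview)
Your proposal is correct and follows essentially the same route as the paper: restrict to a codimension-one slice (the paper uses $\Lambda_d=\{\prod r_i=e^d\}$, you use $\sum\log r_i=0$ and later the $\max r_i=1$ normalization), establish injectivity via a convex potential whose Hessian is the symmetric positive-semidefinite Jacobian with kernel $\mathbb R\mathbf 1$, verify containment in the target region by the angle bound $\vartheta_i<\pi-\Theta_i$ together with the Euler-characteristic identity (the paper's Lemma~\ref{L-2-5}), and then conclude surjectivity by invariance of domain plus a properness/degeneration argument using the limiting angle values of Lemma~\ref{L-1-5}. One small correction: your claim $\partial\theta_i^{jk}/\partial u_j>0$ should be $\geq 0$ (equality when $\Theta_k=0$ and $\Theta_i+\Theta_j=\pi$, cf.\ Lemma~\ref{L-1-3}), though this does not affect the argument since what you actually need is positive definiteness of the Jacobian restricted to the slice.
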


\begin{remark}\label{R-0-6}
Setting $A=V$, then (\ref{E-2}) (resp. (\ref{E-3})) gives the following Gauss-Bonnet inequality (resp. equality)
\begin{equation}\label{E-4}
\sum\nolimits_{i=1}^{|V|}K_i\,>\,2\pi\chi(S) \quad \big(\,\text{resp.}\;\sum\nolimits_{i=1}^{|V|}K_i\,=\,2\pi\chi(S)\,\big).
\end{equation}
\end{remark}

Under proper conditions, a combinatorial argument will show that the origin belongs to the images. A more delicate problem is to search the radius vector which gives the desired circle pattern. For this purpose, we follow  Chow-Luo's work on combinatorial Ricci flows \cite{ChowLuo-jdg}. As a comparison to the topological degree method, this approach has the advantage of providing an algorithm (converging exponentially fast) to find the solution.

Specifically, in hyperbolic background geometry, one considers the ODE system
\begin{equation}
\label{E-5}
\frac{dr_i}{dt}\,=\,-K_i\sinh r_i
\end{equation}
for $i=1,2\cdots,|V|$, with an initial radius vector $r(0)\in \mathbb{R}^{|V|}_{+}$. In Euclidean background geometry, we consider the ODE system
\begin{equation}
\label{E-6}
\frac{dr_i}{dt}\,=\,(K_{av}-K_i)r_i,
\end{equation}
where $K_{av}=2\pi\chi(S)/|V|$.

\begin{theorem}\label{T-0-7}
Suppose that $\Theta:E\to [0,\pi)$ satisfies  $\mathbf{(C1)}$. In hyperbolic background geometry, the solution $r(t)$ to the flow $(\ref{E-5})$ exists for all the time $t\geq0$, and the following properties $H_1$-$H_4$ are equivalent:
\begin{itemize}
\item[$H_1.$] $r(t)$ converges as $t\to +\infty$.
\item[$H_2.$] The origin $(0,\cdots,0)$ belongs to  the image of the curvature map.
\item[$H_3.$] If $A$ is a non-empty subset of $V$, then
\begin{equation*}
0\,>\,-\sum\nolimits_{(e,v)\in Lk(A)}\big(\pi-\Theta(e)\big)+2\pi\chi(G(A)\setminus \partial S)+\pi\chi(G(A)\cap\partial S).
\end{equation*}
\item[$H_4.$] $S$ has negative Euler characteristic and $\Theta$ satisfies the conditions $\mathbf{(C2)}$, $\mathbf{(C3)}$.
\end{itemize}
Moreover, if one of the above properties holds, then the flow \eqref{E-5} converges exponentially fast to a radius vector which produces a smooth hyperbolic metric $\mu$ on $S$, so that $(S,\mu)$ supports a circle pattern $\mathcal P$ realizing $(\mathcal T,\Theta)$.
 \end{theorem}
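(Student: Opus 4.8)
The plan is to pass to the substitution $u_i=\log\tanh(r_i/2)$, which is a diffeomorphism from $\mathbb R_+^{|V|}$ onto $\Omega:=\{u\in\mathbb R^{|V|}:u_i<0\text{ for all }i\}$ and turns \eqref{E-5} into the autonomous system $\dot u_i=-K_i$. The structural fact I would rely on — the variational principle behind Theorem~\ref{T-0-4} — is that in these coordinates the matrix $(\partial K_i/\partial u_j)$ is symmetric and positive definite on $\Omega$; thus the curvature map, read in the $u$-coordinates, is the gradient $\nabla\mathcal E$ of a strictly convex function $\mathcal E\colon\Omega\to\mathbb R$, and \eqref{E-5} becomes the negative gradient flow of $\mathcal E$. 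I would also use two elementary facts about a single hyperbolic triangle of circle-pattern type with intersection angles in $[0,\pi)$: every inner angle at a vertex $v_i$ is $<\pi$, so that $K_i<2\pi$ (resp.\ $K_i<\pi$) for $v_i\notin V_\partial$ (resp.\ $v_i\in V_\partial$) and every $r$; and every inner angle at $v_i$ tends to $0$ as $r_i\to\infty$, \emph{uniformly} in the other two radii, so that $K_i\to2\pi$ (resp.\ $\pi$) uniformly as $r_i\to\infty$. Granting these, long time existence is immediate: $\dot u_i=-K_i>-2\pi$ gives $u_i(t)\ge u_i(0)-2\pi t$, so no $r_i$ can reach $0$ in finite time; and choosing $R_0$ with $K_i>0$ whenever $r_i\ge R_0$ (possible by uniformity), a maximum principle gives $\max_i r_i(t)\le\max\!\big(\max_i r_i(0),R_0\big)$, since at any index realizing a would-be new maximum $>R_0$ one has $\dot r_i=-K_i\sinh r_i<0$. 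Hence the solution stays in a compact subset of $\mathbb R_+^{|V|}$ on finite time intervals and exists for all $t\ge0$.

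Next I would dispatch the easy equivalences. For $H_2\Leftrightarrow H_3$: by Theorem~\ref{T-0-4} the image of the curvature map is the set of $(K_i)$ satisfying \eqref{E-1} and \eqref{E-2}; at the origin \eqref{E-1} is automatic and \eqref{E-2} becomes exactly the family of inequalities in $H_3$. For $H_3\Leftrightarrow H_4$, a purely combinatorial argument: taking $A=V$ in $H_3$ yields $0>2\pi\chi(G(V)\setminus\partial S)+\pi\chi(G(V)\cap\partial S)=2\pi\chi(S)$, i.e.\ $\chi(S)<0$; and, given $\chi(S)<0$, one checks that the $H_3$-inequalities over all non-empty $A$ are equivalent to $\mathbf{(C2)}$ and $\mathbf{(C3)}$ by decomposing the boundary of the subcomplex $G(A)$ into edge-paths, identifying each with a non-vacant \textbf{pseudo-Jordan} curve or \textbf{semi pseudo-Jordan} arc, and matching the terms $\chi(G(A)\setminus\partial S)$, $\chi(G(A)\cap\partial S)$, $\sum_{(e,v)\in Lk(A)}(\pi-\Theta(e))$ with the quantities $(s-2)\pi$ and $(s-1)\pi$ occurring in $\mathbf{(C2)}$, $\mathbf{(C3)}$. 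For $H_1\Rightarrow H_2$ (with $H_1$ understood as convergence to a point $r^\ast\in\mathbb R_+^{|V|}$, the pertinent notion since only then does the limit define a metric): from $\dot r_i(t)=-K_i(r(t))\sinh r_i(t)\to-K_i(r^\ast)\sinh r_i^\ast$ together with the boundedness of $r_i(t)$ one gets $K_i(r^\ast)=0$ for all $i$, so $0=Th(r^\ast)$ lies in the image.

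The remaining implication $H_2\Rightarrow H_1$, with the exponential rate, is the analytic heart. Assuming $0$ is in the image, there is $u^\ast\in\Omega$ with $\nabla\mathcal E(u^\ast)=0$; by strict convexity $u^\ast$ is the unique critical point and the unique minimizer of $\mathcal E$. Along the flow, $\frac d{dt}\mathcal E(u(t))=-|\nabla\mathcal E(u(t))|^2\le0$, and, by monotonicity of the gradient of a convex function, $\frac d{dt}|u(t)-u^\ast|^2=-2\langle\nabla\mathcal E(u(t))-\nabla\mathcal E(u^\ast),u(t)-u^\ast\rangle\le0$; hence $u(t)$ stays bounded. The barrier from long time existence (for a fixed $\varepsilon>0$, $u_i>-\varepsilon$ forces $\dot u_i<-\pi<0$, whence $u_i(t)\le\max(u_i(0),-\varepsilon)<0$) keeps $u(t)$ away from the faces $\{u_i=0\}$; together with boundedness, $u(t)$ remains in a compact set $\mathcal K\subset\Omega$. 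If $u(t)\not\to u^\ast$, then along some $t_k\to\infty$ we have $|u(t_k)-u^\ast|\ge c>0$; on the compact set $\mathcal K\cap\{|u-u^\ast|\ge c/2\}$, where $\nabla\mathcal E$ does not vanish, $|\nabla\mathcal E|\ge\delta>0$, so $\mathcal E(u(t))$ drops at a definite rate over a time-set of infinite measure, contradicting the boundedness of $\mathcal E$ on $\mathcal K$. Therefore $u(t)\to u^\ast$, i.e.\ $r(t)\to r^\ast$ with $Th(r^\ast)=0$; since all $K_i=0$, the resulting cone metric is a smooth hyperbolic metric with totally geodesic boundary, and the circles of radii $r_i^\ast$ realize $(\mathcal T,\Theta)$. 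For the rate, $\mathrm{Hess}\,\mathcal E(u^\ast)\ge\lambda_0 I$ with $\lambda_0>0$, so for large $t$, $\frac d{dt}|u(t)-u^\ast|^2\le-\lambda_0|u(t)-u^\ast|^2$, whence $|u(t)-u^\ast|=O(e^{-\lambda_0 t/2})$ and, $r$ being a smooth function of $u$, $|r(t)-r^\ast|=O(e^{-\lambda_0 t/2})$.

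I expect the two pressure points to be: (a) the \emph{uniform} asymptotics $K_i\to2\pi$ (resp.\ $\pi$) as $r_i\to\infty$ with the remaining radii arbitrary — this single estimate both rules out finite-time blow-up and confines the trajectory to a compact subset of $\Omega$, and it must be proved directly from the hyperbolic law of cosines in the presence of obtuse intersection angles, rather than invoking the familiar acute-angle case; and (b) the combinatorial dictionary $H_3\Leftrightarrow H_4$, where one has to track Euler characteristics and the set $Lk(A)$ along $\partial G(A)$ carefully, in particular near $\partial S$, in order to recover precisely the conditions $\mathbf{(C2)}$ and $\mathbf{(C3)}$. Once these are in place, the convergence in $H_2\Rightarrow H_1$ is the standard argument for the negative gradient flow of a strictly convex function possessing an interior minimum.
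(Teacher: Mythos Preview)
Your proposal is correct and follows essentially the same architecture as the paper: the substitution $u_i=\log\tanh(r_i/2)$ turning \eqref{E-5} into the negative gradient flow of a strictly convex potential, the uniform estimate $\vartheta_i\to0$ as $r_i\to\infty$ (Lemma~\ref{L-1-6}) for long-time existence and the upper barrier, Theorem~\ref{T-0-4} for $H_2\Leftrightarrow H_3$, and a separate combinatorial proposition (the paper's Proposition~\ref{P-2-9}, a special case of Proposition~\ref{P-4-3}) for $H_3\Leftrightarrow H_4$. The only notable deviations are technical: you obtain compactness of $\{u(t)\}$ from the monotonicity of $t\mapsto|u(t)-u^\ast|^2$ (gradient monotonicity of a convex function) rather than from coercivity of $\Phi$ as in the paper, and you derive the exponential rate by linearizing via $\mathrm{Hess}\,\Phi(u^\ast)\ge\lambda_0 I$ rather than by the paper's maximum-principle argument on $M(t)=\max_iK_i(t)$ using the area-derivative identity~\eqref{E-11}. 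One small slip to fix: your barrier claim ``$u_i>-\varepsilon$ forces $\dot u_i<-\pi$'' is too strong at boundary vertices, where $K_i=\pi-\sigma(v_i)<\pi$ always; replacing $-\pi$ by any fixed negative constant strictly above $-\pi$ repairs the argument without change.
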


\begin{theorem}\label{T-0-8}
Suppose that $\Theta:E\to [0,\pi)$ satisfies  $\mathbf{(C1)}$. In Euclidean background geometry, the solution $r(t)$ to the flow (\ref{E-6}) exists for all the time $t \geq 0$, and the following properties $E_1$-$E_3$ are  equivalent:
\begin{itemize}
\item[$E_1.$] $r(t)$ converges as $t\to +\infty$.
\item[$E_2.$] The vector $(\,K_{av},\cdots,K_{av}\,)$ belongs to  the image of the curvature map.
\item[$E_3.$] If $A$ is a proper non-empty subset of $V$, then
\begin{equation*}
2\pi\chi(S)|A|/|V|\,>\,-\sum\nolimits_{(e,v)\in Lk(A)}\big(\pi-\Theta(e)\big)+2\pi\chi(G(A)\setminus \partial S)+\pi\chi(G(A)\cap\partial S).
\end{equation*}
\end{itemize}
{Moreover, if one of the above properties holds, then the flow \eqref{E-6} converges exponentially fast to a radius vector which produces an Euclidean cone metric $\mu$ on $S$, so that $(S,\mu)$ has vertex curvatures all equal to $K_{av}$ and supports a circle pattern $\mathcal P$ realizing $(\mathcal T,\Theta)$}.
 \end{theorem}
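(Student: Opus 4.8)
The plan is to adapt the strategy behind Theorem~\ref{T-0-7} to the Euclidean setting, combining the convex variational structure underlying Theorem~\ref{T-0-5} with a Lyapunov analysis of \eqref{E-6}. First I would pass to the logarithmic coordinates $u_i=\log r_i$, which turn \eqref{E-6} into the autonomous system $\dot u_i=K_{av}-K_i$. Because \textbf{(C1)} ensures (via Lemma~\ref{L-1-1}) that the triangle inequalities hold for \emph{every} $r\in\mathbb R_+^{|V|}$, each $K_i$ is a smooth function of $u\in\mathbb R^{|V|}$; it is bounded above by $2\pi$ or $\pi$, and since $\sum_j K_j=2\pi\chi(S)$ is constant (the Gauss--Bonnet equality of Remark~\ref{R-0-6}) each $K_i$ is also bounded below. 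Hence the vector field $u\mapsto(K_{av}-K_i)_i$ is bounded, so $u(t)$ cannot reach infinity in finite time; translating back, $r(t)\in\mathbb R_+^{|V|}$ exists for all $t\ge 0$. Moreover $\sum_i(K_{av}-K_i)=0$, so $\sum_i u_i$ --- equivalently $\prod_i r_i$ --- is conserved; write $H_c$ for the invariant hyperplane through $u(0)$.

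Next I would introduce the potential. The Jacobian $(\partial K_i/\partial u_j)$ is symmetric and positive semidefinite with one-dimensional kernel $\mathbb R(1,\dots,1)$ --- this is precisely the structure behind the ``injective up to scalings'' assertion of Theorem~\ref{T-0-5}, the kernel reflecting that rescaling all radii leaves all angles, hence all $K_i$, unchanged. Consequently the closed $1$-form $\sum_i(K_i-K_{av})\,du_i$ is exact, say equal to $dW$ with $W$ convex on $\mathbb R^{|V|}$ and invariant under $u\mapsto u+t(1,\dots,1)$, and \eqref{E-6} becomes the negative gradient flow $\dot u=-\nabla W$, which stays in $H_c$. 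A critical point of $W|_{H_c}$ is a point where $\nabla W$ is a multiple of $(1,\dots,1)$; since the entries of $\nabla W$ sum to zero this forces $\nabla W=0$, i.e.\ $K_i\equiv K_{av}$. As the kernel direction of the Hessian is transverse to $H_c$, the restriction $W|_{H_c}$ is strictly convex, so such a critical point, when it exists, is the unique minimizer of $W|_{H_c}$; by the scaling invariance of the $K_i$, property $E_2$ is exactly the statement that this minimizer exists.

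The equivalences then follow. For $E_1\Rightarrow E_2$: along the flow $\frac{d}{dt}W(u(t))=-\|\nabla W(u(t))\|^2\le 0$, so convergence of $u(t)$ forces $\int_0^\infty\|\nabla W\|^2\,dt<\infty$; since $\frac{d}{dt}\|\nabla W\|^2$ is bounded along the (bounded) trajectory, $\|\nabla W(u(t))\|\to 0$, and continuity gives $K_i(u^\infty)=K_{av}$, so $(K_{av},\dots,K_{av})$ lies in the image. For $E_2\Rightarrow E_1$: rescale a radius vector with $K\equiv K_{av}$ so that it corresponds to a point $\bar u\in H_c$ (necessarily the global minimizer of the convex $W$); then $\frac{d}{dt}\tfrac12\|u(t)-\bar u\|^2=-(u-\bar u)\cdot\nabla W(u)\le-(W(u)-W(\bar u))\le 0$, so the trajectory is precompact in $H_c$, and by the argument just used every accumulation point satisfies $K\equiv K_{av}$, hence equals $\bar u$ by strict convexity, forcing $u(t)\to\bar u$ and $r(t)\to e^{\bar u}$. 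Finally $E_2\Leftrightarrow E_3$ is a direct reading of Theorem~\ref{T-0-5}: the constraint \eqref{E-1} holds for the constant vector $(K_{av},\dots,K_{av})$ (automatically when $\chi(S)\le 0$, since then $K_{av}\le 0$), \eqref{E-3} for $A=V$ is the Gauss--Bonnet equality, and for a proper non-empty $A$ the ``equality iff $A=V$'' clause turns \eqref{E-3} into exactly the strict inequality $E_3$. Exponential convergence comes from linearizing $\dot u=-\nabla W$ at $\bar u$ inside $H_c$: the Hessian of $W|_{H_c}$ at $\bar u$ is positive definite, so $\bar u$ is a hyperbolic attracting equilibrium of the restricted flow and a standard Lyapunov estimate gives $\|u(t)-\bar u\|=O(e^{-ct})$; then $\bar r=e^{\bar u}$ yields the Euclidean cone metric with all vertex curvatures $K_{av}$, and the circles of radii $\bar r_i$ realize $(\mathcal T,\Theta)$.

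The step I expect to be the main obstacle is $E_2\Rightarrow E_1$: a convex function need not be coercive, so existence of the minimizer cannot be read off from boundedness of sublevel sets, and one must instead use the monotonicity of $\|u(t)-\bar u\|^2$ to confine the orbit and then combine it with an $\omega$-limit analysis and the strict convexity of $W|_{H_c}$ to obtain genuine (not merely subsequential) convergence. A secondary technical point, present already in the hyperbolic case but sharper here, is keeping precise track of the scaling degeneracy --- making $W$ and the semidefiniteness of its Hessian explicit on surfaces with boundary, with the boundary vertices contributing $\pi-\sigma(v_i)$ rather than $2\pi-\sigma(v_i)$ to the curvature.
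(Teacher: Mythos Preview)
Your approach is correct and follows the same overall variational strategy as the paper (energy $\Phi$ with $\nabla\Phi=K-K_{av}$, flow as negative gradient on the invariant hyperplane, strict convexity there), but differs tactically in two places. For $E_2\Rightarrow E_1$ the paper does not use your distance-Lyapunov trick $\tfrac{d}{dt}\|u-\bar u\|^2\le 0$; instead it invokes the elementary fact (Lemma~\ref{L-2-8}) that a strictly convex function with a critical point is automatically coercive, so the decreasing sublevel sets of $\Phi|_{H_c}$ confine the orbit --- your worry that ``a convex function need not be coercive'' is precisely what that lemma dispels, though your alternative works equally well. For the exponential rate the paper does not linearize: it sets $g(t)=\sum_i(K_i-K_{av})^2$, computes $g'(t)=\sum_{[v_i,v_j]\in E}(\partial K_i/\partial u_j)(K_i-K_j)^2$, and proves a discrete Poincar\'e-type inequality (Lemma~\ref{L-3-7}) yielding $g'\le -c_1 g$ globally along the precompact orbit; your linearization gives only a local rate, which suffices once convergence is known, but the paper's route produces a uniform constant without passing to a neighborhood. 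One small gap: in $E_2\Leftrightarrow E_3$ you verify \eqref{E-1} only when $\chi(S)\le 0$; the paper's Remark~\ref{R-3-4} shows that \eqref{E-1} is in fact implied by \eqref{E-3}, closing this without any sign assumption on $\chi(S)$.
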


With proper modifications to the flows (\ref{E-5}) and (\ref{E-6}), one can search circle patterns with other prescribed vertex curvatures. The following is a result generalizing Theorem \ref{T-0-3}.

\begin{theorem}\label{T-0-9}
 Let $\mathcal T$ be a triangulation of a surface $S$ of topological type $(g,n)$. Suppose that $\Theta:E\to [0,\pi)$ and $\varphi:V_\partial\to [0,\pi)$ are two functions such that  $\mathbf{(C1)}$, $\mathbf{(C2)}$ and  the following conditions are satisfied:
 \begin{itemize}
 \item[$\mathbf{(BV1)}$] The Gauss-Bonnet inequality (resp. equality) holds:
\[
\sum\nolimits_{v\in V_\partial}\varphi(v) \,>\,2\pi\chi(S) \quad \big(\;\text{resp.}\;\sum\nolimits_{v\in V_\partial}\varphi(v)\,=\,2\pi\chi(S)\;\big).
\]
\item[$\mathbf{(BV2)}$] If the edges $e_1,\cdots,e_s$ form a \textbf{non-vacant} \textbf{semi pseudo-Jordan} arc $\gamma$ in $S$, then for any \textbf{non-empty semi enclosing set} $W$ of $\gamma$
 \[
\sum\nolimits_{v\in W\cap V_\partial}\varphi(v)+
\sum\nolimits_{l=1}^s\big(\pi-\Theta(e_l)\big)\,>\,\pi.
 \]
\end{itemize}
Then there exists a hyperbolic (resp. Euclidean) metric on $S$ so that $(S,\mu)$, whose boundary consists of $n$ disjoint simple piecewise-geodesic closed curves with turning angles assigned by $\varphi$, supports a $\mathcal T$-type circle pattern
$\mathcal P$ with the exterior intersection angles given by $\Theta$.
 Moreover, the pair $(\mu,\mathcal P)$ is unique up to isometries (resp. similarities).
\end{theorem}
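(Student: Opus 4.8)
The plan is to derive Theorem~\ref{T-0-9} from the image descriptions of the curvature map in Theorems~\ref{T-0-4} and~\ref{T-0-5}. Via Thurston's construction, producing a hyperbolic (resp.\ Euclidean) metric $\mu$ on $S$ with piecewise-geodesic boundary whose turning angles are prescribed by $\varphi$ and which supports a $\mathcal T$-type circle pattern with exterior intersection angles $\Theta$ is the same thing as finding a radius vector $r^{\ast}\in\mathbb R_{+}^{|V|}$ with $Th(r^{\ast})=K^{\ast}$, where $K^{\ast}=(K^{\ast}_1,\dots,K^{\ast}_{|V|})$ is the target curvature vector given by $K^{\ast}_i=0$ for $v_i\in V\setminus V_\partial$ and $K^{\ast}_i=\varphi(v_i)$ for $v_i\in V_\partial$: the first condition makes the cone metric smooth at interior vertices, while $K^{\ast}_i=\varphi(v_i)=\pi-\sigma(v_i)$ forces $\partial S$ to turn by exactly $\varphi(v_i)$ at a boundary vertex. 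Uniqueness up to isometries (resp.\ up to similarities) then follows at once from the injectivity (resp.\ injectivity up to scaling) part of Theorem~\ref{T-0-4} (resp.\ Theorem~\ref{T-0-5}). So the whole point is to check that $K^{\ast}$ lies in the image, i.e.\ that it satisfies $(\ref{E-1})$ together with $(\ref{E-2})$ in the hyperbolic case and $(\ref{E-3})$ in the Euclidean case.

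Condition $(\ref{E-1})$ is immediate: $0<2\pi$ at interior vertices and $\varphi(v_i)<\pi$ at boundary vertices by hypothesis. Since $\sum_{v_i\in A}K^{\ast}_i=\sum_{v_i\in A\cap V_\partial}\varphi(v_i)$, everything reduces to proving, for every non-empty $A\subseteq V$,
\[
\sum\nolimits_{v_i\in A\cap V_\partial}\varphi(v_i)+\sum\nolimits_{(e,v)\in Lk(A)}\bigl(\pi-\Theta(e)\bigr)\ \geq\ 2\pi\chi\bigl(G(A)\setminus\partial S\bigr)+\pi\chi\bigl(G(A)\cap\partial S\bigr),
\]
with strict inequality for all such $A$ in the hyperbolic case, and for all proper non-empty $A$ in the Euclidean case, equality holding there exactly when $A=V$. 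The case $A=V$ is special: $Lk(V)=\emptyset$ and $G(V)=S$, so the right-hand side collapses to $2\pi\chi(S)$ and the inequality is precisely the Gauss--Bonnet dichotomy of $\mathbf{(BV1)}$.

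For a proper non-empty $A$ I would run a discrete Gauss--Bonnet argument on the subcomplex $G(A)$, in the spirit of the equivalence $H_3\Leftrightarrow H_4$ underlying Theorem~\ref{T-0-7}. Expanding $2\pi\chi(G(A)\setminus\partial S)+\pi\chi(G(A)\cap\partial S)$ through the numbers of vertices, edges and triangles of $\mathcal T$ lying in $G(A)$, one rewrites the difference of the two sides as a sum over the boundary structure of $G(A)$ relative to $S$: closed edge-paths bounding non-vacant pseudo-Jordan disk components of $S\setminus G(A)$ are controlled by $\mathbf{(C2)}$; open edge-paths that close up along an arc of $\partial S$ to non-vacant semi pseudo-Jordan arcs are controlled by $\mathbf{(BV2)}$, whose $\varphi$-weights match exactly the turning angles carried by the enclosed boundary vertices; complementary components of non-positive Euler characteristic contribute favorably for purely topological reasons; and everything that remains -- the edges running along vacant pieces and the triangles of $G(A)$ adjacent to $S\setminus G(A)$ -- is absorbed into the $Lk(A)$ sum, each of whose summands $\pi-\Theta(e)$ is positive. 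Throughout, $\mathbf{(C1)}$ is what guarantees (Lemma~\ref{L-1-1}) that the length data assembled on $G(A)$ genuinely arise from a cone metric, so that these computations are legitimate. Assembling the contributions gives the strict form of $(\ref{E-2})$; the same computation gives $(\ref{E-3})$ in the Euclidean case, with equality forced to occur only at $A=V$ by the Gauss--Bonnet equality of $\mathbf{(BV1)}$. Hence $K^{\ast}\in\mathrm{Image}(Th)$, and as a bonus the modified flow $dr_i/dt=(K^{\ast}_i-K_i)\sinh r_i$ and its Euclidean analogue, analyzed as in Theorems~\ref{T-0-7} and~\ref{T-0-8}, furnish an exponentially convergent algorithm for $r^{\ast}$.

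The main obstacle I anticipate is precisely this last combinatorial bookkeeping. The subcomplex $G(A)$ need not be connected; a component of $S\setminus G(A)$ need not be a disk, and only disk components give pseudo-Jordan curves (and only non-vacant ones are controlled by $\mathbf{(C2)}$); boundary edge-paths may share vertices or wind around; and one must ensure that every boundary edge and every weight $\pi-\Theta(e)$ or $\varphi(v)$ is counted once, with the correct sign, while simultaneously tracking where the inequality can degenerate to an equality so as to recover the Euclidean rigidity statement. Producing a clean decomposition of the boundary structure of $G(A)$ that matches at once the $Lk(A)$ term, the two Euler characteristics, and the hypotheses $\mathbf{(C2)}$--$\mathbf{(BV2)}$ is the delicate part; everything else is either quoted from Theorems~\ref{T-0-4}--\ref{T-0-5} or a direct check.
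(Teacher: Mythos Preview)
Your proposal is correct and follows essentially the same route as the paper: define the target vector $K^\ast=K[\varphi]$, invoke the image descriptions of Theorems~\ref{T-0-4}--\ref{T-0-5} (packaged in the paper as Theorem~\ref{T-4-1}), and reduce everything to the combinatorial check that $\mathbf{(C2)}$ and $\mathbf{(BV2)}$ force the inequality~(\ref{E-2}) for every proper non-empty $A$ (the paper's Proposition~\ref{P-4-3}). For the bookkeeping you flag as the obstacle, the paper's argument is cleaner than your sketch suggests: one may assume $G(A)\setminus\partial S$ is connected of type $(g_0,n_0)$, note that $\chi(G(A)\cap\partial S)$ is minus the number of open-arc components, and then the only non-trivial case is $g_0=0$, $n_0=1$ with at most one such arc, where the boundary of $G(A)\setminus\partial S$ is exactly a single pseudo-Jordan curve or semi pseudo-Jordan arc and $\mathbf{(C2)}$ or $\mathbf{(BV2)}$ applies directly---no decomposition of $S\setminus G(A)$ or delicate edge-counting is needed; also, $\mathbf{(C1)}$ plays no role in this combinatorial step (it is used only to make Theorems~\ref{T-0-4}--\ref{T-0-5} available).
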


\begin{figure}[htbp]\centering
\includegraphics[width=0.55\textwidth]{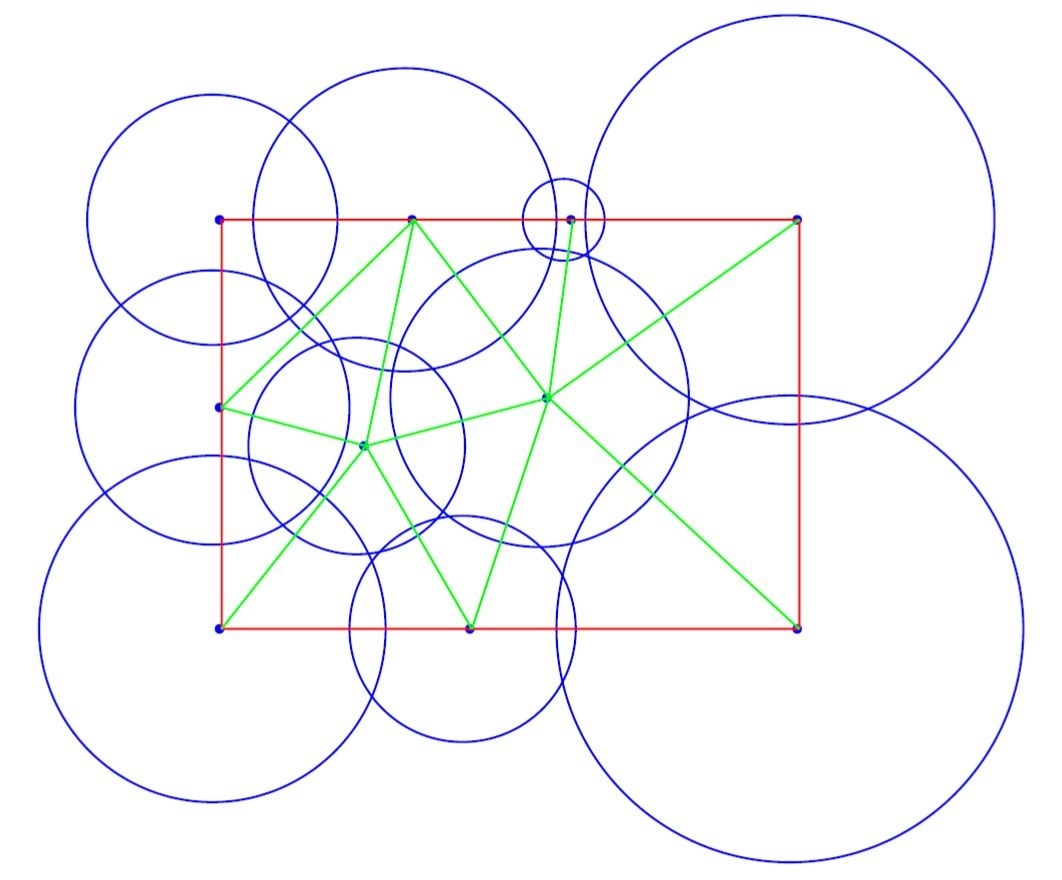}
\caption{A circle pattern on a rectangle}
\end{figure}

Theorem \ref{T-0-9} can be applied to study planar circle patterns.
As an example, we have the following result regarding circle patterns on rectangles. More relevant results can be found in Section \ref{Further}.
\begin{corollary}\label{C-0-10}
 Let $\mathcal T$ be a triangulation of a quadrangle. Suppose that  $\Theta:E\to [0,\pi)$ is a function satisfying $\mathbf{(C1)}$, $\mathbf{(C2)}$ and the following conditions:
 \begin{itemize}
 \item[$\mathbf{(R1)}$] If the edges $e_1,\cdots,e_s$ form a \textbf{semi pseudo-Jordan} arc joining a pair of opposite corner vertices, then $\sum\nolimits_{l=1}^s \Theta(e_l))<(s-1/2)\pi$.
 \item[$\mathbf{(R2)}$] If the edges $e_1,\cdots,e_s$ form a \textbf{semi pseudo-Jordan} arc which does not join a pair of opposite corner vertices, then $\sum\nolimits_{l=1}^s \Theta(e_l))<(s-1)\pi$.
\end{itemize}
Then there exists a rectangle $R$ which supports a circle pattern $\mathcal P$ with exterior intersection angles given by $\Theta$. Moreover, the pair $(R,\mathcal P)$ is unique up to similarities.
\end{corollary}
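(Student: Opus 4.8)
The plan is to deduce the corollary from Theorem \ref{T-0-9} by choosing the boundary turning-angle function $\varphi$ appropriately and then translating conditions $\mathbf{(R1)}$, $\mathbf{(R2)}$ into $\mathbf{(BV1)}$, $\mathbf{(BV2)}$. A quadrangle is a surface of topological type $(0,1)$, so $\chi(S)=1>0$; thus we must work in the Euclidean background geometry branch of Theorem \ref{T-0-9}, where the Gauss--Bonnet \emph{equality} $\sum_{v\in V_\partial}\varphi(v)=2\pi\chi(S)=2\pi$ is required. The natural choice is to prescribe turning angle $\pi/2$ at each of the four corner vertices of the quadrangle and turning angle $0$ at every other boundary vertex; this is exactly the condition that the resulting Euclidean polygon has straight sides meeting at four right angles, i.e.\ is a rectangle. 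With this choice $\sum_{v\in V_\partial}\varphi(v)=4\cdot(\pi/2)=2\pi$, so $\mathbf{(BV1)}$ holds as an equality, and Theorem \ref{T-0-9} then produces a Euclidean metric realizing $(\mathcal T,\Theta)$ with totally geodesic sides and right-angled corners — that is, a genuine rectangle $R$ — unique up to similarities.

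The first step is therefore to verify that with $\varphi$ defined as above, condition $\mathbf{(BV2)}$ is equivalent to the conjunction of $\mathbf{(R1)}$ and $\mathbf{(R2)}$. Fix a non-vacant semi pseudo-Jordan arc $\gamma$ formed by edges $e_1,\dots,e_s$, and a non-empty semi enclosing set $W$. Write $\lambda\cup\mathbb K$ for the region cut off, with $\lambda\subset\partial S$ the boundary arc and $\mathbb K$ the simply connected complementary component, $\partial\mathbb K=\gamma\cup\lambda$. The quantity $\sum_{v\in W\cap V_\partial}\varphi(v)$ counts $\pi/2$ for each corner vertex lying in $W$; since $W$ collects the vertices of $\partial S$ on the sub-arc $\lambda$, this sum is $(\pi/2)\cdot(\text{number of corner vertices on }\lambda)$. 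An endpoint of $\gamma$ is a boundary vertex, and the two endpoints of $\gamma$ partition $\partial S$ into $\lambda$ and its complement; the corner vertices lying strictly inside $\lambda$ or at the (possibly coinciding) endpoints of $\gamma$ determine this count. The key case distinction is: (a) $\gamma$ joins a pair of opposite corner vertices — then exactly one of the two corners between them lies on $\lambda$ plus the two endpoint corners themselves, giving contribution $(\pi/2)\cdot\text{(appropriate count)}$; this is the scenario governed by $\mathbf{(R1)}$; (b) otherwise, $\mathbf{(R2)}$ applies. In case (b), rearranging $\mathbf{(BV2)}$, namely $\sum_{v\in W\cap V_\partial}\varphi(v)+\sum_{l=1}^s(\pi-\Theta(e_l))>\pi$, and using that the corner contribution is at least $0$, one obtains $\sum_{l}(\pi-\Theta(e_l))>\pi-\sum_{v\in W\cap V_\partial}\varphi(v)$, and $\mathbf{(R2)}$ asserts $\sum_l\Theta(e_l)<(s-1)\pi$, i.e.\ $\sum_l(\pi-\Theta(e_l))>\pi$; since the corner sum is $\ge 0$ this gives $\mathbf{(BV2)}$. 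One checks conversely that when the corner contribution is $0$ (no corner vertices on $\lambda$) $\mathbf{(BV2)}$ forces exactly $\mathbf{(R2)}$, and when it is $\pi/2$ (one corner on $\lambda$) it forces $\sum_l(\pi-\Theta(e_l))>\pi/2$, i.e.\ $\sum_l\Theta(e_l)<(s-1/2)\pi$, which is $\mathbf{(R1)}$; and the two-corner case $\pi$ makes $\mathbf{(BV2)}$ automatic, consistent with the fact that there is no constraint of that shape in the corollary. One also notes that every semi pseudo-Jordan arc in a quadrangle automatically has a non-empty semi enclosing set once we take $W$ to include a corner vertex, so the hypotheses $\mathbf{(R1)}$, $\mathbf{(R2)}$ as stated (which quantify over \emph{all} such arcs, not merely non-vacant ones) certainly imply the required inequalities.

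The main obstacle I expect is precisely this bookkeeping of corner vertices: one must be careful about whether the endpoints of $\gamma$ are themselves corner vertices, about the degenerate situation where $\gamma$ is a closed arc (its two endpoints coincide), and about the two choices of semi enclosing set corresponding to the two complementary arcs $\lambda$ — $\mathbf{(BV2)}$ must hold for \emph{every} non-empty $W$, so for a given $\gamma$ one may need to check both sides. Getting the combinatorics of "how many of the four corners lie on a given side of $\gamma$" exactly right, and matching the resulting thresholds $(\pi/2, \pi, 3\pi/2, 2\pi)$ against the thresholds $(s-1)\pi$ and $(s-1/2)\pi$ in $\mathbf{(R1)}$--$\mathbf{(R2)}$, is the delicate point; everything else is a direct invocation of Theorem \ref{T-0-9} together with the observation that prescribing turning angles $\pi/2$ at the corners and $0$ elsewhere is exactly the analytic characterization of a Euclidean rectangle. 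Finally, the uniqueness "up to similarities" is inherited verbatim from the Euclidean case of Theorem \ref{T-0-9}.
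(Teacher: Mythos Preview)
Your approach is correct and essentially identical to the paper's: define $\varphi$ to be $\pi/2$ at the four corner vertices and $0$ elsewhere, check the Gauss--Bonnet equality $\sum\varphi=2\pi$, verify $\mathbf{(BV2)}$ (equivalently $\mathbf{(Q2)}$) from $\mathbf{(R1)}$ and $\mathbf{(R2)}$ by counting corners in $W$, and invoke the main existence theorem. The paper cites Theorem~\ref{T-4-3} (the disk specialization) rather than Theorem~\ref{T-0-9}, but these are the same for $(g,n)=(0,1)$; and the paper dispatches the corner bookkeeping with ``it is easy to see'' where you spell out the case analysis --- the point being simply that $\mathbf{(R2)}$ gives $\mathbf{(BV2)}$ outright (since $|W\cap V_\ast|\ge 0$), while for an arc joining opposite corners each semi enclosing set necessarily contains exactly one of the remaining two corners, so $|W\cap V_\ast|=1$ and $\mathbf{(R1)}$ suffices.
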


The paper is organized as follows: In next section, we introduce basic properties of three-circle configurations. In Section~\ref{sec:hyperbolic}, we study circle patterns in hyperbolic background geometry, and prove Theorem~\ref{T-0-4} and Theorem~\ref{T-0-7}. In Section~\ref{sec:euclidean}, we deal with circle patterns in Euclidean background geometry, and prove Theorem~\ref{T-0-5} and Theorem~\ref{T-0-8}. In Section \ref{Further}, we focus on circle patterns with prescribed vertex curvatures and prove Theorem \ref{T-0-9}. As consequences, several results on planar circle patterns are established. The last section is an appendix on some combinatorial facts.

\medskip
{Throughout this paper, we denote by $|\cdot|$ the cardinality of a set}.

\section{Preliminaries on three-circle configurations}\label{Sec1}

In this section we establish several lemmas on three-circle configurations. It should be pointed out that the non-obtuse versions of these results have been appeared in  \cite{T1,Marden-Rodin,ChowLuo-jdg}. To simplify notations, for three angles $\Theta_i, \Theta_j, \Theta_k\in [0,\pi)$,  set
\[
\xi_{i}\,=\,\cos\Theta_i+\cos\Theta_j\cos\Theta_k,
\]
and $\xi_{j}, \xi_{k}$ similarly.

\begin{lemma}\label{L-1-1}
Suppose that $\Theta_i, \Theta_j, \Theta_k \in [0,\pi) $ are three angles satisfying
\begin{equation*}
\xi_{i}\,\geq\, 0,\;\;\xi_{j}\,\geq\, 0,\;\;\xi_{k}\,\geq\, 0.
\end{equation*}
For any three positive numbers $r_i,r_j,r_k$, there exists a configuration of three intersecting disks in both Euclidean and hyperbolic geometries, unique up to isometry, having radii $r_i,r_j,r_k$  and meeting in exterior intersection angles $\Theta_i,\Theta_j,\Theta_k$.
\end{lemma}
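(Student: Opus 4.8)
The plan is to treat the two background geometries in parallel, reducing the existence-and-uniqueness of the three-circle configuration to the question of whether the three candidate edge lengths $l_{ij},l_{jk},l_{ki}$ (defined by the formulas in the introduction) satisfy the strict triangle inequalities, and then solving that as an elementary trigonometric inequality. First I would record the explicit formulas: in the hyperbolic case $\cosh l_{ij} = \cosh r_i\cosh r_j + \sinh r_i\sinh r_j\cos\Theta_k$ (using the convention that $\Theta_k$ is the angle opposite $r_k$, i.e.\ on the edge $[v_i,v_j]$), and in the Euclidean case $l_{ij}^2 = r_i^2 + r_j^2 + 2r_ir_j\cos\Theta_k$, and similarly for the other two. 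Since each $\Theta\in[0,\pi)$ gives $\cos\Theta>-1$, each $l_{ij}$ is a well-defined positive real number, and the formula is exactly the law of cosines for a triangle with two sides $r_i,r_j$ meeting at \emph{interior} angle $\pi-\Theta_k$; geometrically this is the distance between the centers of two disks of radii $r_i,r_j$ meeting at exterior angle $\Theta_k$. So once the triangle with side lengths $l_{ij},l_{jk},l_{ki}$ exists, erecting the three disks of radii $r_i,r_j,r_k$ at its vertices produces the desired configuration, and uniqueness up to isometry follows from the standard fact that a Euclidean/hyperbolic triangle is determined up to isometry by its three side lengths (SSS), together with the fact that the disk radii are then prescribed.

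The substantive step is therefore: under the hypotheses $\xi_i,\xi_j,\xi_k\ge 0$, show that $l_{ij},l_{jk},l_{ki}$ obey the triangle inequalities (and do so strictly, and also verify the Euclidean degenerate-triangle exclusion). I would do the Euclidean case first as it is cleaner. Here one wants, say, $l_{ij} < l_{jk} + l_{ki}$, equivalently $l_{ij}^2 < l_{jk}^2 + l_{ki}^2 + 2 l_{jk}l_{ki}$; substituting the law-of-cosines expressions and simplifying, the inequality $l_{ij} \le l_{jk}+l_{ki}$ turns out to be governed by a quantity proportional to $\xi_k = \cos\Theta_k + \cos\Theta_i\cos\Theta_j$ (up to a positive factor involving $r_i,r_j,r_k$), so the hypothesis $\xi_k\ge 0$ yields $l_{ij}\le l_{jk}+l_{ki}$, and one checks equality forces a degenerate situation that is ruled out because $r_i,r_j,r_k>0$ and $\Theta<\pi$; cyclic permutation handles the other two inequalities. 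Concretely I expect the key algebraic identity to take the shape
\begin{equation*}
(l_{jk}+l_{ki})^2 - l_{ij}^2 \;=\; 2 l_{jk}l_{ki} \;+\; 2 r_k\big(r_i\,\xi_i' + r_j\,\xi_j'\big)
\end{equation*}
or an equivalent rearrangement making manifest that nonnegativity of the $\xi$'s forces the right-hand side to be positive; the precise bookkeeping of which $\xi$ controls which inequality is the only place care is needed. For the hyperbolic case the same strategy applies but one works with $\cosh$ of the relevant lengths and uses the monotonicity of $\cosh$ on $[0,\infty)$: the triangle inequality $l_{ij}\le l_{jk}+l_{ki}$ is equivalent to $\cosh l_{ij} \le \cosh(l_{jk}+l_{ki}) = \cosh l_{jk}\cosh l_{ki} + \sinh l_{jk}\sinh l_{ki}$, and after substituting and using $\sinh l_{jk} = (\cosh^2 l_{jk}-1)^{1/2}$ the difference again reduces, after a (longer but routine) computation, to something controlled by $\xi_k\ge 0$; the strictness and the fact that the resulting triangle is nondegenerate follow as before.

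The main obstacle I anticipate is purely computational: carrying out the hyperbolic reduction cleanly, because the presence of the square-root term $\sinh l_{jk}\sinh l_{ki}$ means one has to square once and keep track of signs, and the resulting polynomial in $\cosh r_i,\cosh r_j,\cosh r_k,\sinh r_i,\sinh r_j,\sinh r_k$ and the three cosines is bulky. I would manage this by exploiting the structure: factor out the obvious positive monomials in the $r$-variables, and observe that the remaining factor is a nonnegative combination of $\xi_i,\xi_j,\xi_k$ and of manifestly nonnegative quantities such as $(1-\cos\Theta)$ and products of $\sinh r$'s. An alternative that sidesteps some of the algebra is a continuity/deformation argument: the triangle inequalities hold trivially when all $\Theta=0$ (then $l_{ij}=r_i+r_j$ etc., a degenerate-free triangle), and as the $\Theta$'s increase the lengths $l_{ij}$ vary continuously; one checks that the inequalities can fail only when some $\xi$ crosses $0$, so on the closed region $\{\xi_i,\xi_j,\xi_k\ge0\}$ they persist. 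I would present the direct computation as the main proof and perhaps remark on the deformation viewpoint, and I would explicitly flag that the Euclidean statement additionally asserts nondegeneracy (strict inequalities), which is where $\Theta_i,\Theta_j,\Theta_k<\pi$ (equivalently $\cos\Theta>-1$) is used rather than just $\xi\ge0$.
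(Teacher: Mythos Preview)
Your overall strategy---reducing existence and uniqueness to the strict triangle inequalities for the three edge lengths and then verifying these algebraically from $\xi_i,\xi_j,\xi_k\ge 0$---is precisely the paper's approach. However, your specific expectation about the reduction is wrong: a single triangle inequality such as $l_{ij}<l_{jk}+l_{ki}$ is \emph{not} controlled by $\xi_k$ alone. For a concrete Euclidean counterexample take $r_i=r_j=r_k=1$, $\Theta_k=0$, and $\Theta_i=\Theta_j$ close to $\pi$; then $\xi_k=1+\cos^2\Theta_i>0$, yet $l_{ij}=2$ while $l_{jk}+l_{ki}=4\cos(\Theta_i/2)\to 0$, so that inequality fails. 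What actually goes wrong in this example is that $\xi_i=\xi_j=2\cos\Theta_i<0$: the other two $\xi$'s are the ones doing the work.

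The paper's device is to handle all three inequalities simultaneously via a single symmetric condition. In the Euclidean case one shows $|l_i-l_j|<l_k$ by proving $4l_i^2l_j^2-(l_i^2+l_j^2-l_k^2)^2>0$; substituting $l_i^2=r_j^2+r_k^2+2r_jr_k\cos\Theta_i$ (and cyclically) this expands to
\[
\sin^2\Theta_i\,r_j^2r_k^2+\sin^2\Theta_j\,r_k^2r_i^2+\sin^2\Theta_k\,r_i^2r_j^2
+2\xi_i\,r_i^2r_jr_k+2\xi_j\,r_j^2r_kr_i+2\xi_k\,r_k^2r_ir_j,
\]
which is visibly nonnegative once all three $\xi$'s are $\ge 0$, and strictly positive because $\Theta_l<\pi$ forces any vanishing $\sin\Theta_l$ to come from $\Theta_l=0$, whence $\xi_l=2$. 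The hyperbolic case follows the same pattern: combine both directions into $(\cosh l_i\cosh l_j-\cosh l_k)^2<\sinh^2 l_i\sinh^2 l_j$, substitute, and expand to obtain the analogous symmetric sum in the three $\xi$'s (with coefficients $a_ja_k\,x_jx_k\,x_i^2$ etc., $a_\eta=\cosh r_\eta$, $x_\eta=\sinh r_\eta$) plus an extra positive term $2(1+\cos\Theta_i\cos\Theta_j\cos\Theta_k)\,x_i^2x_j^2x_k^2$. So your plan is salvageable, but only after dropping the ``one $\xi$ per inequality'' heuristic and computing the fully symmetric form; attempting to isolate one inequality at a time will not close.
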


\begin{figure}[htbp]\centering
\includegraphics[width=0.5\textwidth]{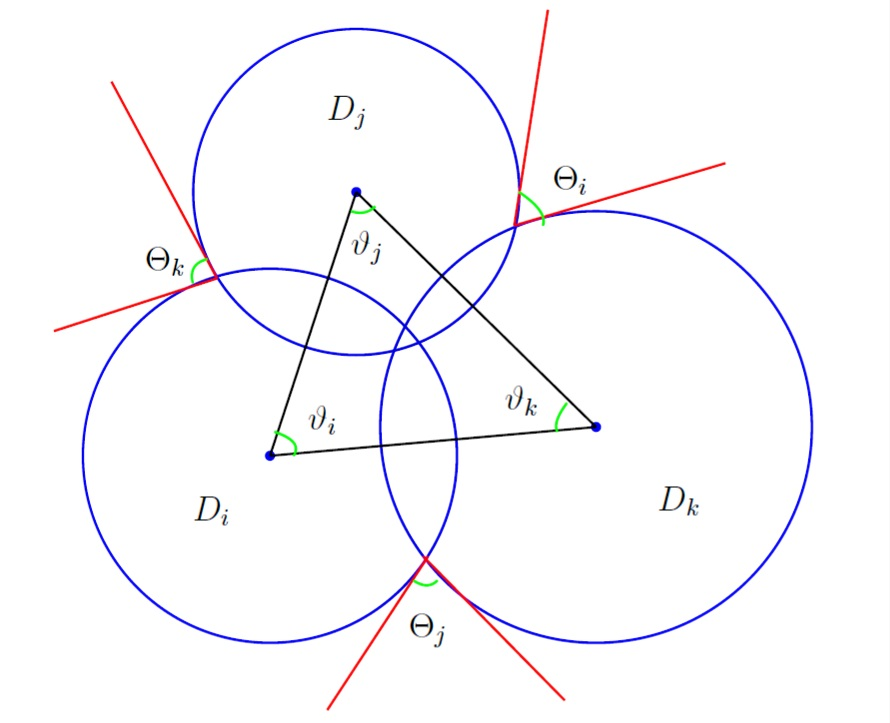}
\caption{A three-circle configuration}\label{fig2}
\end{figure}

\begin{proof}
For the Euclidean case, set
\[
l_i\,=\,\sqrt{r_j^2+r_k^2+2\cos\Theta_ir_jr_k},
\]
and $l_j,l_k$ similarly. It suffices to check that $l_i,l_j,l_k$ satisfy the triangle inequalities. Without loss of generality, we will prove the inequality
\[
|l_i-l_j|\,<\,l_k.
\]
That is,
\[
\Big|\sqrt{r_j^2+r_k^2+2\cos\Theta_ir_jr_k}-\sqrt{r_k^2+r_i^2+2\cos\Theta_jr_kr_i} \,\Big|\,<\,\sqrt{r_i^2+r_j^2+2\cos\Theta_kr_ir_j}\,,
\]
which is equivalent to the following inequality
\begin{equation*}
\sin^2\Theta_ir_j^2r_k^2+\sin^2\Theta_jr_k^2 r_i^2+\sin^2 \Theta_kr^2_ir^2_j
+2\xi_{i}r_i^2r_jr_k+2\xi_{j}r_j^2r_kr_i+2\xi_{k}r_k^2 r_ir_j\,>\,0.
\end{equation*}
Under the conditions that $\xi_{i}\geq 0$, $\xi_{j}\geq 0$, $\xi_{k}\geq 0$, the above inequality holds. Thus the statement follows.

The hyperbolic case has been established in  \cite{Zhou-arxiv}. We include it here for the sake of completeness. Let $l_i>0$ such that
\[
\cosh l_i\ =\ \big(\cosh r_{j}\cosh r_{k}+\sinh r_{j}\sinh r_{k}\cos\Theta_i\big)
\]
Define $l_j, l_k$ similarly. It suffices to check that $l_i,l_j,l_k$ satisfy the triangle inequalities. In other words,
\begin{equation}\label{E-7}
\cosh(l_i+l_j)\ >\ \cosh l_k,
\end{equation}
and
\begin{equation}\label{E-8}
\cosh(l_i-l_j)\ <\ \cosh l_k.
\end{equation}
Combining  (\ref{E-7}) and (\ref{E-8}), we need to show
\begin{equation}\label{E-9}
(\cosh l_i\cosh l_j-\cosh l_k)^2\ <\ \sinh^2 l_i\sinh^2 l_j.
\end{equation}
To simplify the notations, for $\eta=i,j,k$, set
\[
a_\eta\ =\ \cosh r_\eta,\;x_\eta\ =\ \sinh r_\eta,\;I_\eta\ =\ \cos\Theta_\eta.
\]
Then
\begin{equation}\label{E-10}
\cosh l_i\cosh l_j-\cosh l_k\ =\ (a_ia_j+I_iI_jx_ix_j)x_k^2+(I_ia_ix_j+ I_ja_jx_i) a_kx_k-I_kx_ix_j.
\end{equation}
Substituting (\ref{E-10}) into (\ref{E-9}), it is equivalent to proving
\begin{equation*}
\begin{split}
&\sin^2\Theta_ix_j^2x^2_k+\sin^2\Theta_jx^2_k x_i^2+\sin^2\Theta_kx^2_ix^2_j+\big(2+2I_iI_jI_k\big)x_i^2x_j^2x_k^2\\
&\quad+2\xi_{i}a_ja_kx_jx_kx_i^2+2\xi_{j}a_ka_ix_kx_ix_j^2+2\xi_{k}a_ia_jx_ix_jx_k^2\;>\;0.
\end{split}
\end{equation*}
Note that
\[
\xi_{i}\ \geq\  0, \;\xi_{j}\  \geq\ 0,\;\xi_{k}\ \geq\ 0,
\]
and
\[
2+2I_iI_jI_k\ =\ 2+2\cos\Theta_i\cos\Theta_j\cos\Theta_k\ >\ 0.
\]
We thus prove the above inequality and finish the proof.
\end{proof}

\begin{remark}\label{R-1-2}
If $\Theta_i,\Theta_j,\Theta_k\in[0,\pi)$ satisfy $\Theta_i+\Theta_j+\Theta_k\leq \pi$, then
\[
\xi_{i}\,=\,\cos\Theta_i+\cos\Theta_j\cos\Theta_k\,\geq\,2\cos\frac{\Theta_i+\Theta_j+\Theta_k}{2}\cos\frac{\Theta_i-\Theta_j-\Theta_k}{2}\\
\,\geq\, 0.
\]
Similarly, we show that $\xi_{j}\geq 0$ and $\xi_{k}\geq 0$.
\end{remark}

As in Figure~\ref{fig2}, let $\bigtriangleup_{ijk}$ be the triangle whose vertices are the centers of three intersecting circles. Let $\vartheta_i,\vartheta_j,\vartheta_k$ denote the corresponding inner angles at the centers.

\begin{lemma}\label{L-1-3}
Let $\Theta_i, \Theta_j, \Theta_k \in [0,\pi) $ satisfy  the conditions of Lemma \ref{L-1-1}.
\begin{itemize}
\item[$(i)$]In Euclidean geometry,
\[
\frac{\partial\vartheta_i}{\partial r_i}\,<\,0,\quad r_j\frac{\partial\vartheta_i}{\partial r_j}\,=\,r_i\frac{\partial\vartheta_j}{\partial r_i}\,\geq\,0.
\]
\item[$(ii)$]In hyperbolic geometry,
\[
\frac{\partial\vartheta_i}{\partial r_i}\,<\,0,\quad \sinh r_j\frac{\partial\vartheta_i}{\partial r_j}\,=\,\sinh r_i\frac{\partial\vartheta_j}{\partial r_i}\,\geq\,0,\quad \frac{\partial \mathrm{Area}(\bigtriangleup_{ijk})}{\partial r_i} \,>\,0,
\]
where $\mathrm{Area}(\bigtriangleup_{ijk})$ denotes the area of $\bigtriangleup_{ijk}$.
\end{itemize}
\end{lemma}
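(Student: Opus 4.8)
The plan is to reduce all four assertions to explicit differentiation of the two laws of cosines already in play: the one giving the edge lengths $l_i,l_j,l_k$ of $\bigtriangleup_{ijk}$ (with $l_\eta$ opposite the center of radius $r_\eta$) in terms of $(r_i,r_j,r_k)$, and the one giving the inner angles $\vartheta_i,\vartheta_j,\vartheta_k$ in terms of $(l_i,l_j,l_k)$. Two structural facts make this feasible. First, $l_i$ depends only on $r_j,r_k$ — it is the distance between those two centers — so $\partial l_i/\partial r_i=0$, and cyclically; more precisely $r_j\,\partial l_i/\partial r_j=(l_i^2+r_j^2-r_k^2)/(2l_i)$ in the Euclidean case and $\sinh r_j\,\partial l_i/\partial r_j=(\cosh r_j\cosh l_i-\cosh r_k)/\sinh l_i$ in the hyperbolic case, with the companions obtained by permuting indices. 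Second, I will use the classical first-variation formulas for the angles of a fixed triangle,
\[
d\vartheta_i=\frac{l_i}{2\,\mathrm{Area}(\bigtriangleup_{ijk})}\big(dl_i-\cos\vartheta_k\,dl_j-\cos\vartheta_j\,dl_k\big)
\]
in Euclidean geometry, and
\[
d\vartheta_i=\frac{\sinh l_i}{\mathcal A}\big(dl_i-\cos\vartheta_k\,dl_j-\cos\vartheta_j\,dl_k\big),\qquad \mathcal A:=\sinh l_j\sinh l_k\sin\vartheta_i,
\]
in hyperbolic geometry, where $\mathcal A$ is symmetric in $i,j,k$ by the hyperbolic sine rule; both come from differentiating $\cos\vartheta_i$ and using the law of cosines with the projection formula $l_j=l_i\cos\vartheta_k+l_k\cos\vartheta_i$.

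First I would prove the two symmetry relations. Applying the variation formula to $\vartheta_i$ with respect to $r_j$ and using $\partial l_j/\partial r_j=0$ gives, in the hyperbolic case,
\[
\sinh r_j\,\frac{\partial\vartheta_i}{\partial r_j}=\frac{\sinh l_i}{\mathcal A}\Big(\sinh r_j\,\frac{\partial l_i}{\partial r_j}-\cos\vartheta_j\,\sinh r_j\,\frac{\partial l_k}{\partial r_j}\Big),
\]
and substituting the expressions above for $\sinh r_j\,\partial l_i/\partial r_j$, $\sinh r_j\,\partial l_k/\partial r_j$ together with $\cos\vartheta_j=(\cosh l_i\cosh l_k-\cosh l_j)/(\sinh l_i\sinh l_k)$ turns the claimed equality $\sinh r_j\,\partial\vartheta_i/\partial r_j=\sinh r_i\,\partial\vartheta_j/\partial r_i$ into a polynomial identity in $\cosh l_\eta,\cosh r_\eta$ that collapses after using $\sinh^2 l_k=\cosh^2 l_k-1$. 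The Euclidean case is the same computation run with $l_i^2=r_j^2+r_k^2+2r_jr_k\cos\Theta_i$; there the analogous identity reduces, with $P=l_i^2-l_j^2$ and $Q=r_j^2-r_i^2$, to the trivial $(l_k^2+P)(l_k^2+Q)-(l_k^2-P)(l_k^2-Q)=2l_k^2(P+Q)$. (Geometrically, $r_j\,\partial l_i/\partial r_j$ is the signed distance from the center of radius $r_j$ to the foot of the radical axis of the two circles on that edge, which makes the underlying potential transparent, but the computation is self-contained.) No positivity hypothesis is needed here.

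Next I would pin down the signs, and this is where $\mathbf{(C1)}$ — that is, $\xi_i,\xi_j,\xi_k\ge 0$ — enters, and where the obtuse case genuinely departs from the tangential one of \cite{T1,Marden-Rodin,ChowLuo-jdg}. For $\partial\vartheta_i/\partial r_i<0$: since $\partial l_i/\partial r_i=0$, the variation formula gives $\partial\vartheta_i/\partial r_i=\tfrac{l_i}{2\,\mathrm{Area}}\big(-\cos\vartheta_k\,\partial l_j/\partial r_i-\cos\vartheta_j\,\partial l_k/\partial r_i\big)$ (and likewise with $\sinh l_i/\mathcal A$ in the hyperbolic case), and clearing denominators using the explicit $\partial l_j/\partial r_i,\partial l_k/\partial r_i,\cos\vartheta_j,\cos\vartheta_k$ reduces the claim to the positivity of an expression which, re-expanded in the radii and $\cos\Theta_\eta$, is a combination of the $\xi_\eta$ and manifestly nonnegative terms with positive coefficients — in the spirit of the inequality $\sum\sin^2\Theta_\eta(\cdots)+2\sum\xi_\eta(\cdots)>0$ that drove the proof of Lemma~\ref{L-1-1}. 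For $r_j\,\partial\vartheta_i/\partial r_j\ge 0$ one argues analogously, showing the bracket $\sinh r_j\,\partial l_i/\partial r_j-\cos\vartheta_j\,\sinh r_j\,\partial l_k/\partial r_j$ (hyperbolic, and similarly Euclidean) is $\ge 0$. I expect this sign-chasing to be the main obstacle: the bookkeeping is heavier than in Lemma~\ref{L-1-1}, since individual factors such as $\cos\vartheta_j$ or $r_j+r_k\cos\Theta_i$ may now be negative, so one must keep the whole expression grouped until the $\xi$-positive form emerges rather than arguing factor by factor.

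Finally, for $\partial\,\mathrm{Area}(\bigtriangleup_{ijk})/\partial r_i>0$ in the hyperbolic setting I would invoke Gauss--Bonnet, $\mathrm{Area}(\bigtriangleup_{ijk})=\pi-\vartheta_i-\vartheta_j-\vartheta_k$, so that $\partial\,\mathrm{Area}/\partial r_i=-\sum_m\partial\vartheta_m/\partial r_i$. Summing the three variation formulas, the coefficient of $dl_j$ in $\mathcal A\sum_m d\vartheta_m$ is $\sinh l_j-\sinh l_i\cos\vartheta_k-\sinh l_k\cos\vartheta_i$, and similarly for $dl_k$, while the $dl_i$ term is killed by $\partial l_i/\partial r_i=0$; substituting the explicit $\partial l_j/\partial r_i,\partial l_k/\partial r_i$ and simplifying via the radius cosine law reduces $-\mathcal A\,\partial\,\mathrm{Area}/\partial r_i$ to a negative multiple of a $\xi$-positive expression, giving the strict inequality. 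Throughout, the hyperbolic and Euclidean computations proceed in lockstep, so it suffices to carry the hyperbolic one out in full and indicate the (uniformly simpler) Euclidean specialization.
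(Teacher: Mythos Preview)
Your plan is sound and would succeed, but the paper organizes the computation more economically, and the comparison is worth noting. You propose to establish the symmetry $r_j\,\partial\vartheta_i/\partial r_j=r_i\,\partial\vartheta_j/\partial r_i$ and the nonnegativity by separate manipulations, and then to attack $\partial\vartheta_i/\partial r_i<0$ and $\partial\,\mathrm{Area}/\partial r_i>0$ by two further direct expansions into $\xi$-positive form. The paper instead carries out a single chain-rule computation, arriving at the closed form
\[
r_j\,\frac{\partial\vartheta_i}{\partial r_j}\;=\;\frac{\sin^2\Theta_k\,r_i^2r_j^2+(\xi_i r_i+\xi_j r_j)\,r_ir_jr_k}{\Gamma\,l_k^{2}}
\]
(and its hyperbolic analogue), which is visibly symmetric in $i,j$ and nonnegative under $\mathbf{(C1)}$ at one stroke. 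The paper then \emph{recycles} this off-diagonal information rather than computing the diagonal directly: in the Euclidean case the constraint $\vartheta_i+\vartheta_j+\vartheta_k=\pi$ gives $r_i\,\partial\vartheta_i/\partial r_i=-(r_j\,\partial\vartheta_i/\partial r_j+r_k\,\partial\vartheta_i/\partial r_k)\le 0$, with strictness because the two summands cannot vanish together; in the hyperbolic case the paper first gets $\partial\,\mathrm{Area}/\partial r_i>0$ by a short synthetic argument (the edge of length $l_i$ is fixed while the base angles $\vartheta_j,\vartheta_k$ weakly increase and cannot both stay put, so the triangle opens outward), and then Gauss--Bonnet yields $\partial\vartheta_i/\partial r_i=-\partial\,\mathrm{Area}/\partial r_i-\partial\vartheta_j/\partial r_i-\partial\vartheta_k/\partial r_i<0$. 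Your route is legitimate, but it is precisely the ``heavier bookkeeping'' you flagged; the paper's use of the angle-sum and Gauss--Bonnet constraints to derive the diagonal sign and the area monotonicity from the already-established off-diagonal signs avoids that work entirely.
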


\begin{proof}
We follow the arguments in \cite{Zhou-arxiv} and \cite{Xu-arxiv} and use the same notations in the proof of Lemma \ref{L-1-1}.
In Euclidean geometry, it follows from the cosine law that
\[
\cos\vartheta_i\,=\,\frac{l_j^2+l_k^2-l_i^2}{2l_jl_k}.
\]
Taking the derivative with respect to $l_i$, we have
\[
\frac{\partial\vartheta_i}{\partial l_i}\,=\,\frac{l_i}{ \Gamma_{ijk}},
\]
where $\Gamma_{ijk}=\sin\vartheta_i l_jl_k$. By the sine law of Euclidean triangles, it is easy to see
\[
\Gamma_{ijk}\,=\,\Gamma_{jki}\,=\,\Gamma_{kij}\,=:\,\Gamma.
\]
Similarly, we obtain
\[
\frac{\partial\vartheta_i}{\partial l_k}\,=\,-\frac{l_i\cos\vartheta_j}{\Gamma_{ijk}}\,=\,-\frac{l_i\cos\vartheta_j}{\Gamma}.
\]
Note that $
l_i^2=r_j^2+r_k^2+2r_jr_kI_i$,
 we have
 \[
\frac{\partial l_i}{\partial r_j}\,=\,\frac{r_j+r_kI_i}{l_i}.
 \]
 Similarly,
 \[
\frac{\partial l_k}{\partial r_j}\,=\,\frac{r_j+r_iI_k}{l_k}.
 \]
 A direct calculation gives
\begin{equation*}
\begin{aligned}
r_j\frac{\partial\vartheta_i}{\partial r_j}\,=\,&\,r_j\,\Bigg(\,\frac{\partial\vartheta_i}{\partial l_i}\frac{\partial l_i}{\partial r_j}+\frac{\partial\vartheta_i}{\partial l_k}\frac{\partial l_k}{\partial r_j}\,\Bigg)\\
\,=\,&\,\frac{\sin^2\Theta_k r_i^2r_j^2+(\xi_{i}r_i+\xi_{j}r_j)r_ir_jr_k}{\Gamma l_k^{2}}.
\end{aligned}
\end{equation*}
It is easy to see
\[
r_j\frac{\partial\vartheta_i}{\partial r_j}\,=\,r_i\frac{\partial\vartheta_j}{\partial r_i}\,\geq\,0.
\]
Moreover, $ \partial\vartheta_i/\partial r_j=0$ holds if and only if $\Theta_k=0$ and $\Theta_i+\Theta_j=\pi$. Consequently,
$$r_i\frac{\partial\vartheta_i}{\partial r_i}\,=\,-r_i\,\Bigg(\frac{\partial\vartheta_j}{\partial r_i}+\frac{\partial\vartheta_k}{\partial r_i}\Bigg)
\,=\,-\Bigg(\,r_j\frac{\partial\vartheta_i}{\partial r_j}+r_k\frac{\partial\vartheta_i}{\partial r_k}\,\Bigg)\,\leq\,0.$$
The equality holds if and only if $\Theta_i=\pi$, $\Theta_j=\Theta_k=0$, which contradicts to the assumption that $\Theta_i\in[0,\pi)$. We thus finish the proof of the Euclidean case.

In hyperbolic geometry, due to the cosine and sine laws of hyperbolic triangles, one obtains
\[
\frac{\partial\vartheta_i}{\partial l_i}\,=\,\frac{\sinh l_i}{\Upsilon},\,\quad\quad \frac{\partial\vartheta_i}{\partial l_k}\,=\,-\frac{\sinh l_i\cos\vartheta_j}{\Upsilon},
\]
 where $\Upsilon=\sin\vartheta_i\sinh l_j\sinh l_k$. Moreover, note that
\[
\cosh l_i\,=\,\cosh r_j\cosh r_k+\cos\Theta_i\sinh r_j\sinh r_k\ =\ a_ja_k+I_ix_jx_k.
\]
It follows that
\begin{equation*}
\frac{\partial l_i}{\partial r_j}\ =\ \frac{a_k x_j+I_ia_jx_k}{\sinh l_i}.
\end{equation*}
Similarly,
\[
\frac{\partial l_k}{\partial r_j}\,=\,\frac{a_i x_j+I_ka_jx_i}{\sinh l_k}.
\]
We have
\begin{equation*}
\begin{aligned}
\sinh r_j\frac{\partial\vartheta_i}{\partial r_j}
\,=&\,\sinh r_j\,\Bigg(\,\frac{\partial\vartheta_i}{\partial l_i}\frac{\partial l_i}{\partial r_j}+\frac{\partial\vartheta_i}{\partial l_k}\frac{\partial l_k}{\partial r_j}\,\Bigg)\\
=&\, \frac{\sin^2 \Theta_k a_k^2x_i^2x_j^2+(\xi_{i}a_jx_i+\xi_{j}a_ix_j)x_ix_jx_k}{\Upsilon\sinh^2 l_k}.
\end{aligned}
\end{equation*}
which implies
\[
\sinh r_j\frac{\partial\vartheta_i}{\partial r_j}\,=\,\sinh r_i\frac{\partial\vartheta_j}{\partial r_i}\,\geq\,0.
\]
Similarly, $\partial\vartheta_i/\partial r_j=0$ holds if and only if $\Theta_k=0$ and $\Theta_i+\Theta_j=\pi$.

Fix $r_j$, $r_k$, and let $r_i$ vary. Then $l_i$ stays constant. Because $\partial\vartheta_j/\partial r_i\geq 0$ and $\partial\vartheta_k/\partial r_i\geq 0$, that means the other two edges of the triangle $\triangle_{ijk}$ move outwards or remain unchanged as $r_i$ increases. Furthermore, it is easy to see that these two edges can not stay unchanged simultaneously. Hence the area $\mathrm{Area}(\bigtriangleup_{ijk})$ is a strictly increasing function of $r_i$. Namely,
\[
\frac{\partial \mathrm{Area}(\bigtriangleup_{ijk})}{\partial r_i}\,>\,0.
\]
Due to the Gauss-Bonnet formula, we have
\[
\frac{\partial\vartheta_i}{\partial r_i}\,=\,-\frac{\partial \mathrm{Area}(\triangle_{ijk})}{\partial r_i}-\frac{\partial\vartheta_j}{\partial r_i}-\frac{\partial\vartheta_k}{\partial r_i}\,<\,0.
\]
\end{proof}

\begin{lemma}\label{L-1-4}
In Euclidean geometry,
\[
0\,<\,\vartheta_i\,<\,\pi-\Theta_i, \quad\vartheta_i+\vartheta_j+\vartheta_k\,=\,\pi.
\]
In hyperbolic geometry,
\begin{equation*}
0\,<\,\vartheta_i\,<\,\pi-\Theta_i,\quad \vartheta_i+\vartheta_j+\vartheta_k\,<\,\pi.
\end{equation*}
\end{lemma}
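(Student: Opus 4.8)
The plan is to dispose of the positivity of the angles and the angle–sum relations at once, and then to extract the bound $\vartheta_i<\pi-\Theta_i$ from the strict monotonicity $\partial\vartheta_i/\partial r_i<0$ recorded in Lemma~\ref{L-1-3}, by letting $r_i\to 0^+$.

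By Lemma~\ref{L-1-1} the data $(r_i,r_j,r_k)$ and $(\Theta_i,\Theta_j,\Theta_k)$ produce a genuine, nondegenerate triangle $\bigtriangleup_{ijk}$ in the chosen background geometry, so each of its inner angles lies in $(0,\pi)$; in particular $\vartheta_i,\vartheta_j,\vartheta_k>0$. For the angle sums, in the Euclidean case $\vartheta_i+\vartheta_j+\vartheta_k=\pi$ is the usual triangle angle sum, while in the hyperbolic case Gauss-Bonnet gives $\vartheta_i+\vartheta_j+\vartheta_k=\pi-\mathrm{Area}(\bigtriangleup_{ijk})<\pi$, the inequality being strict because a nondegenerate hyperbolic triangle has positive area.

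For the upper bound, fix $r_j$ and $r_k$ and view $\vartheta_i$ as a function of $r_i\in(0,\infty)$. By Lemma~\ref{L-1-3} one has $\partial\vartheta_i/\partial r_i<0$ in both geometries, so this function is strictly decreasing; being also bounded (valued in $(0,\pi)$), it has a limit $L=\lim_{r_i\to 0^+}\vartheta_i$, and $\vartheta_i<L$ for every $r_i>0$. To identify $L$, I would pass to the limit in the side lengths: in the Euclidean case $l_j^2=r_i^2+r_k^2+2r_ir_k\cos\Theta_j\to r_k^2$ and $l_k^2=r_i^2+r_j^2+2r_ir_j\cos\Theta_k\to r_j^2$ while $l_i$ is independent of $r_i$, and the $\cosh$-formula gives the same conclusion $l_j\to r_k$, $l_k\to r_j$ in the hyperbolic case (geometrically, the disk $D_i$ collapses to its center, which must then lie on both $C_j$ and $C_k$). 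The limiting triangle has side lengths $(l_i,r_j,r_k)$ and is still nondegenerate, since $\cos\Theta_i\in(-1,1]$ forces the strict triangle inequalities $|r_j-r_k|<l_i<r_j+r_k$ (respectively $\cosh(r_j-r_k)<\cosh l_i<\cosh(r_j+r_k)$) coming from $l_i^2=r_j^2+r_k^2+2r_jr_k\cos\Theta_i$ (respectively $\cosh l_i=\cosh r_j\cosh r_k+\sinh r_j\sinh r_k\cos\Theta_i$). Applying the law of cosines to this limiting triangle and comparing with the defining formula for $l_i$ gives $\cos L=-\cos\Theta_i$, i.e. $L=\pi-\Theta_i$; hence $\vartheta_i<\pi-\Theta_i$.

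The step I expect to require the most care is the passage $r_i\to 0^+$: one must make sure that $\vartheta_i$ extends continuously there with the stated value, which reduces to the short verification above that the collapsed configuration is still an honest triangle (using only $-1<\cos\Theta_i\le 1$). If one prefers to avoid limits, the inequality $\vartheta_i<\pi-\Theta_i$ is equivalent to $\cos\vartheta_i+\cos\Theta_i>0$, hence, via the law of cosines, to the polynomial inequality $l_i^2<l_j^2+l_k^2+2l_jl_k\cos\Theta_i$ and its $\cosh$-analogue; this can be checked by substituting the explicit formulas for $l_i,l_j,l_k$ and arguing with $\xi_i,\xi_j,\xi_k\ge 0$ in the manner of the proof of Lemma~\ref{L-1-1}, but it is a more involved and less transparent computation, so I would present the monotonicity argument, which reuses Lemma~\ref{L-1-3} with essentially no extra work.
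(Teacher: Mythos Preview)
Your argument is correct in spirit and, since the paper omits the proof of this lemma entirely (``The proof is simple and hence we omit the details''), there is no approach to compare against; what you do is natural and fits the surrounding material, as you are essentially combining Lemma~\ref{L-1-3} with the first limit of Lemma~\ref{L-1-5} (which you re-derive independently).

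One small slip to fix: you assert that the limiting triangle with side lengths $(l_i,r_j,r_k)$ is nondegenerate because $\cos\Theta_i\in(-1,1]$ forces the strict inequalities $|r_j-r_k|<l_i<r_j+r_k$. The lower bound is fine, but the upper bound $l_i<r_j+r_k$ fails precisely when $\Theta_i=0$, since then $l_i=r_j+r_k$ (Euclidean) or $\cosh l_i=\cosh(r_j+r_k)$ (hyperbolic), and the limiting configuration is degenerate. This does not damage the conclusion: you do not actually need a limiting \emph{triangle}, only the limit of $\cos\vartheta_i$, and the law-of-cosines expression
\[
\cos\vartheta_i=\frac{l_j^2+l_k^2-l_i^2}{2l_jl_k}\quad\text{(resp. }\ \frac{\cosh l_j\cosh l_k-\cosh l_i}{\sinh l_j\sinh l_k}\text{)}
\]
passes to the limit $-\cos\Theta_i$ directly once $l_j\to r_k$ and $l_k\to r_j$, with no nondegeneracy hypothesis needed (the denominator stays positive). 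So simply drop the nondegeneracy claim and compute the limit of $\cos\vartheta_i$; then strict monotonicity in $r_i$ gives $\vartheta_i<\pi-\Theta_i$ in all cases, including $\Theta_i=0$.
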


The proof is simple and hence we omit the details.

\begin{lemma}\label{L-1-5}
Given three positive numbers $a,b,c$, in both Euclidean and hyperbolic geometries, we have
\begin{equation*}
\begin{aligned}
&\lim_{(r_i,r_j,r_k)\to(0,a,b)}\vartheta_i\;=\;\pi-\Theta_i, \\
&\lim_{(r_i,r_j,r_k)\to(0,0,c)}\vartheta_i+\vartheta_j\;=\;\pi, \\
&\lim_{(r_i,r_j,r_k)\to(0,0,0)}\vartheta_i+\vartheta_j+\vartheta_k\;=\;\pi. \\
\end{aligned}
\end{equation*}
\end{lemma}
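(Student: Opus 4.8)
The plan is to reduce every assertion to the explicit edge–length formulas of the center triangle $\triangle_{ijk}$ together with the law of cosines, using in addition the Gauss--Bonnet relation $\vartheta_i+\vartheta_j+\vartheta_k=\pi-\mathrm{Area}(\triangle_{ijk})$ in the hyperbolic case. Recall from the construction preceding Lemma~\ref{L-1-1} that, writing $l_i,l_j,l_k$ for the sides of $\triangle_{ijk}$ with $l_i$ opposite the center of circle $i$, these lengths are smooth functions of $(r_i,r_j,r_k)$ given by $l_i=\sqrt{r_j^2+r_k^2+2r_jr_k\cos\Theta_i}$ in Euclidean geometry and by $\cosh l_i=\cosh r_j\cosh r_k+\sinh r_j\sinh r_k\cos\Theta_i$ in hyperbolic geometry (and cyclically), while the angle $\vartheta_i$ is recovered from $\cos\vartheta_i=(l_j^2+l_k^2-l_i^2)/(2l_jl_k)$, respectively $\cos\vartheta_i=(\cosh l_j\cosh l_k-\cosh l_i)/(\sinh l_j\sinh l_k)$.

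For the first limit, as $(r_i,r_j,r_k)\to(0,a,b)$ one reads off directly that $l_j\to b$ and $l_k\to a$, while $l_i$ tends to the length determined by $l_i^2\to a^2+b^2+2ab\cos\Theta_i$ (resp. $\cosh l_i\to\cosh a\cosh b+\sinh a\sinh b\cos\Theta_i$). Substituting into the law of cosines gives $\cos\vartheta_i\to-\cos\Theta_i$ in both geometries, and since $\vartheta_i\in(0,\pi)$ and $\cos$ is a homeomorphism of $[0,\pi]$ onto $[-1,1]$, this forces $\vartheta_i\to\pi-\Theta_i$.

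For the second limit, as $(r_i,r_j,r_k)\to(0,0,c)$ the same formulas yield $l_i\to c$, $l_j\to c$, and $l_k\to 0$. Plugging this into the law of cosines for the angle $\vartheta_k$ opposite the vanishing side gives $\cos\vartheta_k\to 1$, hence $\vartheta_k\to 0$. In Euclidean geometry $\vartheta_i+\vartheta_j+\vartheta_k=\pi$ by Lemma~\ref{L-1-4}, so $\vartheta_i+\vartheta_j\to\pi$ at once. In hyperbolic geometry I would instead estimate the area: for all radii close enough to $(0,0,c)$ the two sides $l_i,l_j$ of $\triangle_{ijk}$ meeting at the center of circle $k$ have length at most $D:=c+1$, so (convexity of the distance function in $\mathbb{H}^2$ placing the far endpoint of each segment from the apex to the opposite side at a vertex) the triangle is contained in the hyperbolic circular sector of radius $D$ and angle $\vartheta_k$ at that vertex, whence $\mathrm{Area}(\triangle_{ijk})\le(\cosh D-1)\vartheta_k\to 0$; then $\vartheta_i+\vartheta_j=\pi-\mathrm{Area}(\triangle_{ijk})-\vartheta_k\to\pi$. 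For the third limit, as all three radii tend to $0$ every side length tends to $0$; in Euclidean geometry the angle sum is identically $\pi$, and in hyperbolic geometry $\triangle_{ijk}$ lies in a disk of radius $\max(l_i,l_j,l_k)$, so $\mathrm{Area}(\triangle_{ijk})\le 2\pi(\cosh\max(l_i,l_j,l_k)-1)\to 0$ and Gauss--Bonnet gives $\vartheta_i+\vartheta_j+\vartheta_k\to\pi$.

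The computations are routine limit evaluations; the only step that is not completely mechanical is the hyperbolic second limit, where the law of cosines alone produces a $0/0$ expression for $\vartheta_i$ and $\vartheta_j$ individually, so one must route the argument through the sector area estimate together with $\vartheta_k\to 0$ rather than differentiating the two summands directly. (One could alternatively invoke the monotonicity of $\mathrm{Area}(\triangle_{ijk})$ in the radii from Lemma~\ref{L-1-3}, but the explicit sector bound is cleaner.)
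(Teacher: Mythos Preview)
Your argument is correct. For the first limit and the third limit you do exactly what the paper does (direct substitution into the cosine law, and Gauss--Bonnet plus vanishing area, respectively). The only genuine divergence is in the hyperbolic second limit. The paper computes $\cos\vartheta_i+\cos\vartheta_j$ directly, using the identity
\[
\cos\vartheta_i+\cos\vartheta_j=\frac{\sinh(l_i+l_j)\bigl(\cosh l_k-\cosh(l_i-l_j)\bigr)}{\sinh l_i\,\sinh l_j\,\sinh l_k},
\]
and observes that the right-hand side tends to $0$ as $l_i,l_j\to c$, $l_k\to 0$; this forces $\vartheta_i+\vartheta_j\to\pi$ without ever mentioning the area. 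You instead first show $\vartheta_k\to 0$ from the cosine law at the vertex $k$ (where there is no $0/0$ issue), then bound $\mathrm{Area}(\triangle_{ijk})$ by the area $(\cosh D-1)\vartheta_k$ of the circumscribing sector at $k$, and conclude via Gauss--Bonnet. Both routes avoid the indeterminate form at the vertices $i,j$; the paper's is a purely algebraic one-liner once the identity is in hand, while yours is more geometric and reuses the same Gauss--Bonnet mechanism as the third limit, at the cost of invoking the convexity of the hyperbolic distance function to justify the sector containment.
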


\begin{proof}
{In Euclidean geometry, the first assertion is straightforward. To prove the second assertion, it suffices to show that $\vartheta_k\to 0$ as $(r_i,r_j,r_k)\to(0,0,c)$. By taking the limit in the following equality
\[
\cos\vartheta_k\,=\,\frac{r_{k}^2+r_{j}r_{k}I_{i}+r_{i}r_{k}I_{j}-r_{i}r_{j}I_{k}}{\sqrt{r_{j}^2+r_{k}^2+2r_{j}r_{k}I_{i}}
\sqrt{r_{i}^2+r_{k}^2+2r_{i}r_{k}I_{j}}},
\]
it is easy to check that the statement holds. The third assertion is trivial.}

In hyperbolic geometry, note that
\[
\cos\vartheta_i\,=\,\frac{\cosh l_j\cosh l_k-\cosh l_i}{\sinh l_j\sinh l_k}.
\]
As $(r_i,r_j,r_k)\rightarrow(0,a,b)$, we have $l_j\to b$, $l_k\to a$ and
\[
\cosh l_i\,\to\, \cosh a\cosh b+\cos \Theta_i\sinh a\sinh b.
\]
Hence
\[
\cos\vartheta_i\,\to\, -\cos \Theta_i,
\]
which implies the first assertion. To prove the second one, a routine computation shows
\begin{equation*}
\begin{aligned}
0\,<\,\cos\vartheta_i+\cos\vartheta_j&=\,\frac{\sinh (l_i+l_j)\big[\cosh l_k-\cosh(l_i-l_j)\big]}{\sinh l_i\sinh l_j\sinh l_k}\\
&\leq\,\frac{\sinh (l_i+l_j)(\cosh l_k-1)}{\sinh l_i\sinh l_j\sinh l_k}\\
&=\,\frac{\sinh (l_i+l_j)\sinh(l_k/2)}{\sinh l_i\sinh l_j\cosh(l_k/2)}.
\end{aligned}
\end{equation*}
As $(r_i,r_j,r_k)\rightarrow(0,0,c)$, we have $l_i\rightarrow c$, $l_j\rightarrow c$ and $l_k\rightarrow0$. Therefore,
\[
\cos\vartheta_i+\cos\vartheta_j\,\rightarrow\,0,
\]
which implies $\vartheta_i+\vartheta_j\rightarrow\pi$. For the third assertion, observe that all the three lengths $l_i,l_j,l_k$ tend to zero as $(r_i,r_j,r_k)\rightarrow(0,0,0)$. It follows that $\mathrm{Area}(\bigtriangleup_{ijk})$ tends to zero. Due to Gauss-Bonnet formula, the statement follows.
\end{proof}

\begin{lemma}\label{L-1-6}
In hyperbolic geometry, given $\Theta_i,\Theta_j,\Theta_k\in [0,\pi)$, for any $\epsilon>0$, there exists a positive number $L$ such that for any positive $r_i,r_j,r_k$ satisfying $r_i>L,$
$$\vartheta_i\,<\,\epsilon.$$
As a consequence, we have
\begin{equation*}
\displaystyle{\lim_{r_i\to+\infty}\vartheta_i\;=\;0.}
\end{equation*}
\end{lemma}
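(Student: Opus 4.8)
The plan is to estimate the inner angle $\vartheta_i$ directly from the hyperbolic cosine law in terms of the side lengths $l_i, l_j, l_k$, and to track how these lengths behave as $r_i \to +\infty$ with $r_j, r_k$ arbitrary positive reals. First I would recall from the definition of $l_j$ and $l_k$ that
\[
\cosh l_k = \cosh r_i \cosh r_j + \cos\Theta_k \sinh r_i \sinh r_j, \qquad \cosh l_j = \cosh r_i \cosh r_k + \cos\Theta_j \sinh r_i \sinh r_k,
\]
so both $l_j$ and $l_k$ are bounded below by increasing functions of $r_i$ that tend to $+\infty$ as $r_i \to +\infty$ (using $\cos\Theta_j, \cos\Theta_k > -1$, say $\cosh l_k \geq \frac{1+\cos\Theta_k}{2}\, e^{r_i} \cdot (\text{something positive in } r_j)$ after a crude bound, or more simply $\cosh l_k \geq \cosh r_i \cosh r_j - \sinh r_i \sinh r_j = \cosh(r_i - r_j) \to \infty$ only if $r_j$ is bounded — so instead use $\cosh l_k \geq (1+\cos\Theta_k)\cosh r_i \cosh r_j - \text{lower order}$, which does go to $\infty$). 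Meanwhile $l_i$ stays bounded: $\cosh l_i = \cosh r_j \cosh r_k + \cos\Theta_i \sinh r_j \sinh r_k$ does not depend on $r_i$ at all. So the picture is a triangle with one fixed side $l_i$ and two sides $l_j, l_k$ both becoming very large.

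Next I would plug into
\[
\cos\vartheta_i = \frac{\cosh l_j \cosh l_k - \cosh l_i}{\sinh l_j \sinh l_k}
\]
and show the right-hand side tends to $1$. Since $l_i$ is bounded, $\cosh l_i$ is bounded by some constant $M = M(r_j, r_k)$, and $\cosh l_j \cosh l_k / (\sinh l_j \sinh l_k) = \coth l_j \coth l_k \to 1$ as $l_j, l_k \to \infty$, while $\cosh l_i / (\sinh l_j \sinh l_k) \to 0$. Hence $\cos\vartheta_i \to 1$, i.e. $\vartheta_i \to 0$. The uniformity claim — that there is a single $L$ depending only on $\epsilon$ and the three angles, not on $r_j, r_k$ — requires a little more care, and I expect this to be the main (though mild) obstacle: I need the bound on $\vartheta_i$ to hold for \emph{all} $r_j, r_k > 0$ once $r_i > L$. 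The cleanest route is the comparison $\cosh l_i \leq \cosh(r_j + r_k) \leq \cosh l_j$ and $\cosh l_i \leq \cosh l_k$ — indeed $\cosh l_k = \cosh r_i \cosh r_j + \cos\Theta_k \sinh r_i \sinh r_j \geq \cosh(r_i - r_j)$ is not quite what I want; instead note $\cosh l_j \geq \cosh r_i$ and $\cosh l_k \geq \cosh r_i$ directly (since $\cosh r_i \cosh r_k \geq \cosh r_i$ and the correction term, even if $\cos\Theta_k < 0$, satisfies $\cosh r_i \cosh r_k + \cos\Theta_k \sinh r_i \sinh r_k \geq \cosh r_i(\cosh r_k - \sinh r_k) = \cosh r_i\, e^{-r_k}$) — that last bound degenerates for large $r_k$, so I should instead keep $l_i \le l_j$ and $l_i \le l_k$ by the triangle inequality, which gives $\cosh l_i \le \cosh l_j$ and then
\[
\cos\vartheta_i = \coth l_j \coth l_k - \frac{\cosh l_i}{\sinh l_j \sinh l_k} \geq \coth l_j \coth l_k - \frac{\cosh l_j}{\sinh l_j \sinh l_k} = \coth l_j\Big(\coth l_k - \frac{1}{\sinh l_k}\Big) = \coth l_j\, \tanh(l_k/2).
\]
By symmetry one also gets $\cos\vartheta_i \geq \coth l_k \tanh(l_j/2) \geq \tanh(l_j/2)\tanh(l_k/2)$, and since $\min(l_j, l_k) \to \infty$ uniformly as $r_i \to \infty$ — using, e.g., $\cosh l_j = \cosh r_i \cosh r_k + \cos\Theta_j \sinh r_i \sinh r_k \geq \cosh r_i$, hence $l_j \geq r_i$, and likewise $l_k \geq r_i$ — we conclude $\cos\vartheta_i \geq \tanh^2(r_i/2)$, so choosing $L$ with $\tanh^2(L/2) > \cos\epsilon$ forces $\vartheta_i < \epsilon$ for all $r_j, r_k > 0$ whenever $r_i > L$. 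The limit statement $\lim_{r_i \to +\infty}\vartheta_i = 0$ is then immediate.

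In summary: (1) use the triangle inequality to get $l_i \le l_j, l_k$; (2) from the cosine law derive $\cos\vartheta_i \geq \tanh(l_j/2)\tanh(l_k/2)$; (3) observe $l_j, l_k \geq r_i$ from the cosine-law expressions; (4) combine to get $\cos\vartheta_i \geq \tanh^2(r_i/2)$, yielding both the uniform estimate and the limit. The only place demanding attention is making sure all the elementary hyperbolic-trig inequalities in step (2)–(3) hold for every choice of the angles in $[0,\pi)$ and every $r_j, r_k > 0$; everything else is a short computation.
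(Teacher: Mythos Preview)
Your overall strategy (use the hyperbolic cosine law to show $\cos\vartheta_i\to 1$) matches the paper's, but the execution of the uniformity in steps (1) and (3) of your summary contains genuine errors.

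First, step (1): the triangle inequality does \emph{not} give $l_i\le l_j$ or $l_i\le l_k$. It only gives $l_i<l_j+l_k$. In fact $l_i\le l_j$ can fail: if $r_j$ is much larger than $r_i$ (both large), then $\cosh l_i=\cosh r_j\cosh r_k+\cos\Theta_i\sinh r_j\sinh r_k$ can be far larger than $\cosh l_j=\cosh r_i\cosh r_k+\cos\Theta_j\sinh r_i\sinh r_k$. So the derivation $\cos\vartheta_i\ge\coth l_j\,\tanh(l_k/2)$ based on $\cosh l_i\le\cosh l_j$ is unsupported.

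Second, step (3): the claim $\cosh l_j\ge\cosh r_i$ (hence $l_j\ge r_i$) fails when $\Theta_j$ is obtuse and $r_k$ is small. Writing it out, the inequality is equivalent to $\tanh(r_k/2)\ge|\cos\Theta_j|\tanh r_i$, which for large $r_i$ requires $\tanh(r_k/2)\gtrsim|\cos\Theta_j|$; this is violated for small $r_k$. (You actually anticipated a version of this problem a few lines earlier, then forgot it.) So the final bound $\cos\vartheta_i\ge\tanh^2(r_i/2)$ is not established.

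The paper repairs exactly this difficulty by using the cruder but always-valid estimate $\sinh r_i\sinh r_k\le\cosh r_i\cosh r_k$, which gives
\[
\cosh l_j\ \ge\ (1+c_j)\cosh r_i\cosh r_k,\qquad c_j:=\min\{\cos\Theta_j,0\}>-1,
\]
and similarly for $l_k$, together with $\cosh l_i\le 2\cosh r_j\cosh r_k$. Writing $\cos\vartheta_i=\dfrac{1+\delta-2\rho}{1-\delta}$ with $\delta=\dfrac{\cosh(l_j-l_k)}{\cosh(l_j+l_k)}$ and $\rho=\dfrac{\cosh l_i}{\cosh(l_j+l_k)}$, these bounds give $\delta,\rho\to0$ uniformly in $r_j,r_k$ as $r_i\to\infty$, because the $\cosh r_j,\cosh r_k$ factors in numerator and denominator cancel. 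Your argument can be salvaged along the same lines, but as written the two claimed inequalities do not hold.
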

\begin{proof}
The non-obtuse case was observed by Chow-Luo \cite{ChowLuo-jdg}. The following proof is similar to those in \cite{GeXu-imrn,GeJiang-jfa2,Zhou-arxiv}.
By the hyperbolic cosine law, we have
\begin{equation*}
\cos \vartheta_i
\,=\,\frac{\cosh(l_{j}+l_{k})+\cosh(l_{j}-l_{k})-2\cosh l_{i}}{\cosh(l_{j}+l_{k})-\cosh(l_{j}-l_{k})}
\,=\,\frac{1+\delta-2\rho}{1-\delta},
\end{equation*}
where
\[
\delta\,=\,\frac{\cosh(l_{j}-l_{k})}{\cosh(l_{j}+l_{k})},\quad \rho\,=\,\frac{\cosh l_{i}}{\cosh(l_{j}+l_{k})}.
\]
We shall prove $\delta,\, \rho\rightarrow 0$ uniformly as $r_i\rightarrow +\infty$. For $\eta=i,j,k$, setting $c_\eta=\min\{\cos\Theta_\eta, 0\}$, we have
$0<1+c_\eta\leq 1$. Then
\begin{equation*}
\begin{aligned}
\cosh l_k\,&=\,\cosh r_i\cosh r_j+\sinh r_i\sinh r_j\cos\Theta_k\\
&\geq\,\cosh r_i\cosh r_j+c_k\cosh r_i\cosh r_j\\
&\geq\,(1+c_k)\cosh r_i\cosh r_j\\
&\geq\,(1+c_k)\cosh r_i.
\end{aligned}
\end{equation*}
Similarly,
\[
\cosh l_j\,\geq\,(1+c_j)\cosh r_i\cosh r_k\geq(1+c_j)\cosh r_i.
\]
We obtain
\[
0\,<\,\delta\,<\,\frac{\max\{\cosh l_{j},\cosh l_{k}\}}{\cosh (l_{j}+l_{k})}\,<\,\frac{1}{\min\{\cosh l_{j},\cosh l_{k}\}}\,\leq\,\frac{1}{(1+c_j)(1+c_k)\cosh r_i},
\]
we have $\delta\rightarrow 0$ uniformly as $r_i\rightarrow +\infty$. Furthermore, note that
\[
\cosh l_i\,\leq\, 2\cosh r_j\cosh r_k.
\]
We get
\[
0\,<\,\rho\,\leq\,\frac{\cosh l_i}{\cosh l_j\cosh l_k}\,\leq\,\frac{2}{(1+c_j)(1+c_k)\cosh^2r_i}.
\]
Then $\rho\rightarrow0$ uniformly as $r_i\rightarrow +\infty$.
\end{proof}

\section{Hyperbolic background geometry}\label{sec:hyperbolic}

\subsection{Curvature map and energy function}
Recall that $\mathcal T$ is a triangulation of $S$ with the sets of vertices, edges and triangles $V,E,F$. Given a non-empty subset $A$ of $V$, let $F(A)\subset F$ be the set of triangles that have at least one vertex in $A$. For a vertex $v_i\in V$ and a triangle $\bigtriangleup\in F$ incident to $v_i$, we denote $\vartheta_i^{\triangle}$ by the inner angle at $v_i$ of the triangle $\bigtriangleup$. Therefore, the cone angle $\sigma(v_i)$ at $v_i$ can be expressed as
\[
\sigma(v_i)\,=\,\sum\nolimits_{\triangle\in F(\{v_i\})} \vartheta_i^{\triangle}.
\]
For $i=1,2,\cdots,|V|$, using the change of variables $u_i=\ln \tanh (r_i/2)$, one regards the vertex curvatures as smooth functions of  $u=(u_1,\cdots,u_{|V|})$. This induces the smooth curvature map in terms of $u$:
\[
\begin{aligned}
Th(\cdot):\quad\qquad &\,\mathbb{R}_{-}^{|V|} \qquad
&\,\to \,\qquad \qquad \quad &\,\mathbb{R}^{|V|} \\
\big(u_1,u_2,&\cdots,\,u_{|V|}\big)&\mapsto\;\,\quad\big(K_1,\, K_2,&\cdots,\,K_{|V|}\big).\\
 \end{aligned}
\]

\begin{lemma}\label{L-2-1}
The Jacobian matrix of $Th$ in terms of $u$ is symmetric and positive definite.
\end{lemma}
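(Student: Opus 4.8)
The plan is to express the Jacobian of $Th$ as an assembly of contributions from individual triangles of $\mathcal T$ and to show that each such local contribution is symmetric and negative definite; symmetry and positive definiteness of the global Jacobian then follow formally. First I would record the change-of-variables identity
\[
\frac{\partial}{\partial u_i}\;=\;\sinh r_i\,\frac{\partial}{\partial r_i},
\]
which is immediate from $u_i=\ln\tanh(r_i/2)$. Since $K_i=2\pi-\sigma(v_i)$ or $\pi-\sigma(v_i)$ with $\sigma(v_i)=\sum_{\triangle\ni v_i}\vartheta_i^{\triangle}$, differentiation kills the additive constants and the $(i,j)$-entry of the Jacobian is $\partial K_i/\partial u_j=-\sum_{\triangle\ni v_i}\partial\vartheta_i^{\triangle}/\partial u_j$, which vanishes unless $j=i$ or $[v_i,v_j]$ is an edge. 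Organising the associated quadratic form by triangles yields, for $x=(x_1,\dots,x_{|V|})$,
\[
\sum_{i,j}x_ix_j\,\frac{\partial K_i}{\partial u_j}\;=\;\sum_{\triangle=\triangle(v_iv_jv_k)}\big(-(x|_{\triangle})^{\top}\Lambda^{\triangle}(x|_{\triangle})\big),
\]
where $\Lambda^{\triangle}=\big(\partial\vartheta_a^{\triangle}/\partial u_b\big)_{a,b\in\{i,j,k\}}$ and $x|_{\triangle}=(x_i,x_j,x_k)$. So it suffices to prove that each $-\Lambda^{\triangle}$ is symmetric and positive definite.

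Symmetry of $\Lambda^{\triangle}$ is immediate from Lemma~\ref{L-1-3}(ii): $\Lambda^{\triangle}_{ab}=\sinh r_b\,\partial\vartheta_a/\partial r_b=\sinh r_a\,\partial\vartheta_b/\partial r_a=\Lambda^{\triangle}_{ba}$. For definiteness I would show that $-\Lambda^{\triangle}$ is strictly diagonally dominant with positive diagonal. The diagonal entries $-\Lambda^{\triangle}_{aa}=-\sinh r_a\,\partial\vartheta_a/\partial r_a$ are positive and the off-diagonal entries $-\Lambda^{\triangle}_{ab}$ ($a\neq b$) are nonpositive, again by Lemma~\ref{L-1-3}(ii). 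The crux is the sign of the row sums: using symmetry of $\Lambda^{\triangle}$ together with the hyperbolic Gauss--Bonnet relation $\vartheta_i+\vartheta_j+\vartheta_k=\pi-\mathrm{Area}(\triangle_{ijk})$, one gets for each $a\in\{i,j,k\}$
\[
\sum_{b\in\{i,j,k\}}\Lambda^{\triangle}_{ab}\;=\;\sum_{b\in\{i,j,k\}}\Lambda^{\triangle}_{ba}\;=\;\sinh r_a\,\frac{\partial}{\partial r_a}\big(\vartheta_i+\vartheta_j+\vartheta_k\big)\;=\;-\,\sinh r_a\,\frac{\partial\,\mathrm{Area}(\triangle_{ijk})}{\partial r_a}\;<\;0,
\]
the last inequality being precisely the area monotonicity in Lemma~\ref{L-1-3}(ii). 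Consequently $-\Lambda^{\triangle}_{aa}-\sum_{b\neq a}|\Lambda^{\triangle}_{ab}|=-\sum_b\Lambda^{\triangle}_{ab}>0$ for every row $a$, which is strict diagonal dominance; a symmetric, strictly diagonally dominant matrix with positive diagonal is positive definite (Gershgorin), so $-\Lambda^{\triangle}$ is positive definite. (Alternatively one could verify the $2\times2$ principal minors and the determinant directly via the formulas in the proof of Lemma~\ref{L-1-3}, but the diagonal-dominance route is shorter.)

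Feeding this back, $\sum_{i,j}x_ix_j\,\partial K_i/\partial u_j=\sum_{\triangle}(x|_{\triangle})^{\top}(-\Lambda^{\triangle})(x|_{\triangle})\geq 0$, with equality only if $x|_{\triangle}=0$ for every triangle $\triangle$, hence $x=0$ since every vertex of $\mathcal T$ lies in some triangle; boundary and interior vertices require no separate treatment because, as noted, the constants $2\pi$ and $\pi$ drop out under differentiation. Together with the symmetry already established, this proves that the Jacobian of $Th$ in terms of $u$ is symmetric and positive definite. I expect the only genuine content to be the row-sum computation, where the intrinsic scale of hyperbolic geometry enters through the strict monotonicity of the triangle area in each radius — forcing the angle sum to strictly decrease; in the Euclidean setting this row sum vanishes, reflecting the scaling degeneracy, which is why the Euclidean analogue (Theorem~\ref{T-0-5}) can only assert injectivity up to scalings.
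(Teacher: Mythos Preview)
Your proof is correct and follows essentially the same route as the paper: symmetry from Lemma~\ref{L-1-3}(ii), and positive definiteness via strict diagonal dominance, with the key row-sum computation reducing to the area monotonicity $\partial\,\mathrm{Area}(\triangle)/\partial r_a>0$. The only organisational difference is that you establish diagonal dominance for each $3\times3$ triangle block $-\Lambda^{\triangle}$ and then assemble the quadratic form, whereas the paper verifies diagonal dominance of the global Jacobian directly; the underlying computation is identical.
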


\begin{proof}
If $v_i,v_j$ is a pair of non-adjacent vertices, then
\begin{equation*}
\frac{\partial K_i}{\partial u_j}\,=\,\frac{\partial K_j}{\partial u_i}\,=\,0.
\end{equation*}
Otherwise, suppose that $[v_i,v_j]=e\in E$. In case that $e$ is not a boundary edge, then there exist two triangles $\triangle_1,\triangle_2$ adjacent to $e$. By Lemma \ref{L-1-3}, a simple computation gives
\begin{equation*}
\frac{\partial K_i}{\partial u_j}\,=\,-\frac{\partial \vartheta_{i}^{\triangle_1}}{\partial u_j}-\frac{\partial \vartheta_{i}^{\triangle_2}}{\partial u_j}
\,=\,-\frac{\partial \vartheta_{j}^{\triangle_1}}{\partial u_i}-\frac{\partial \vartheta_{j}^{\triangle_2}}{\partial u_i}
\,=\,\frac{\partial K_j}{\partial u_i}\,<\,0.
\end{equation*}
In case that $e$ is a boundary edge, a similar argument implies
\[
\frac{\partial K_i}{\partial u_j}\,=\,\frac{\partial K_j}{\partial u_i}\,<\,0.
\]
This yields that the Jacobian matrix of $Th$ is symmetric.

Moreover, applying Lemma \ref{L-1-3}, we have
\[
\frac{\partial K_i}{\partial u_i}\,=\,-\sum\nolimits_{\triangle\in F(\{v_i\})}\frac{\partial \vartheta_{i}^{\triangle}}{\partial u_i}\,>\,0.
\]
Combining these, one obtains
\begin{equation}\label{E-11}
\Big|\frac{\partial K_i}{\partial u_i}\Big|-\sum\nolimits_{j\neq i}\Big|\frac{\partial K_i}{\partial u_j}\Big|\,=\,\sum\nolimits_{j=1}^{|V|}\frac{\partial K_i}{\partial u_j}\,=\,\sum\nolimits_{j=1}^{|V|}\frac{\partial K_j}{\partial u_i}\,=\,\sum\nolimits_{\triangle\in F(\{v_i\})}\frac{\partial\mathrm{Area}(\triangle)}{\partial u_i}\,>\,0.
\end{equation}
The Jacobian matrix is diagonally dominant and hence is positive definite.
\end{proof}

Let us consider the $1$-form $\omega=\sum\nolimits_{i=1}^{|V|}K_i du_i$. Because $\partial K_i/\partial u_j=\partial K_j/\partial u_i$, it is easy to see that $\omega$ is closed. Following Colin de Verdi\`{e}re \cite{Colin-invent}, the following energy function
\begin{equation}\label{E-12}
\Phi(u)\,=\,\int_{u(0)}^u \omega,
\end{equation}
is well-defined and is independent on the choice of
piecewise smooth paths in $\mathbb R^{|V|}_{-}$ from $u(0)$ to $u$. Here $u(0)\in\mathbb R^{|V|}_{-}$ is an arbitrary initial point.

\begin{lemma}\label{L-2-2}
The energy function $\Phi$ in terms of $u$ is strictly convex in $\mathbb{R}^{|V|}_{-}$.
\end{lemma}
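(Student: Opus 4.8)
The plan is to recognise the Hessian of $\Phi$ as the Jacobian matrix of $Th$ in terms of $u$, and then to read off strict convexity from Lemma~\ref{L-2-1}. Since $\omega=\sum_{i=1}^{|V|}K_i\,du_i$ is closed and $\mathbb{R}^{|V|}_{-}=(-\infty,0)^{|V|}$ is simply connected, the line integral~\eqref{E-12} defining $\Phi$ is path-independent, so $\Phi$ is a well-defined smooth function on $\mathbb{R}^{|V|}_{-}$ with $\partial\Phi/\partial u_i=K_i$ for every $i$. Differentiating once more, the Hessian of $\Phi$ at any point $u$ is
\[
\mathrm{Hess}\,\Phi(u)\,=\,\Big(\frac{\partial^2\Phi}{\partial u_i\partial u_j}(u)\Big)_{i,j}\,=\,\Big(\frac{\partial K_i}{\partial u_j}(u)\Big)_{i,j},
\]
which is exactly the Jacobian matrix of the curvature map $Th$ at $u$.

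By Lemma~\ref{L-2-1}, this matrix is symmetric and positive definite at every $u\in\mathbb{R}^{|V|}_{-}$; in particular $\mathrm{Hess}\,\Phi$ is positive definite throughout the domain. Since $\mathbb{R}^{|V|}_{-}$ is an open convex subset of $\mathbb{R}^{|V|}$ (being a product of open intervals), a smooth function whose Hessian is positive definite at every point of it is strictly convex: for distinct $u^{(0)},u^{(1)}\in\mathbb{R}^{|V|}_{-}$ the function $t\mapsto\Phi\big((1-t)u^{(0)}+tu^{(1)}\big)$ has strictly positive second derivative on $[0,1]$, hence is strictly convex. This yields the claim.

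There is essentially no obstacle here: the lemma is a direct corollary of Lemma~\ref{L-2-1}, the only point to be careful about being that the domain $\mathbb{R}^{|V|}_{-}$ is convex, so that pointwise positive definiteness of the Hessian genuinely upgrades to global strict convexity of $\Phi$.
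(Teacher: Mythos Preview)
Your proof is correct and follows exactly the paper's approach: identify $\mathrm{Hess}\,\Phi$ with the Jacobian of $Th$ and invoke Lemma~\ref{L-2-1}. The paper's own proof is the one-line version of what you wrote; your additional remarks about path-independence and convexity of the domain are accurate elaborations but not substantively different.
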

\begin{proof}
Note that the Hessian of $\Phi$ is equal to the Jacobian of the curvature map, which is positive definite by Lemma \ref{L-2-1}. Hence $\Phi$ is strictly convex.
\end{proof}

\begin{corollary}\label{C-2-3}
The  curvature map $Th$ in terms of $u$ is injective.
\end{corollary}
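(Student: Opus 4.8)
The plan is to obtain the injectivity of $Th$ as an immediate consequence of the strict convexity of the energy function $\Phi$ proved in Lemma \ref{L-2-2}, together with the basic fact that the gradient of a strictly convex $C^1$ function on a convex open domain is injective. The first thing to record is that, by the very definition $(\ref{E-12})$ of $\Phi$ as the line integral of the closed $1$-form $\omega=\sum_{i=1}^{|V|}K_i\,du_i$ over paths in $\mathbb{R}^{|V|}_{-}$, one has
\[
\nabla\Phi(u)\,=\,\big(K_1(u),\dots,K_{|V|}(u)\big)\,=\,Th(u)
\]
for every $u\in\mathbb{R}^{|V|}_{-}$; thus injectivity of $Th$ is the same as injectivity of $\nabla\Phi$.

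Next I would argue as follows. Let $u,u'\in\mathbb{R}^{|V|}_{-}$ with $u\neq u'$. Since $\mathbb{R}^{|V|}_{-}$ is convex, the segment joining $u'$ to $u$ lies in the domain, so the function $g(t)=\Phi\big(u'+t(u-u')\big)$ is defined and smooth on $[0,1]$. Its second derivative is the quadratic form of the Hessian of $\Phi$ evaluated on $u-u'$, which is strictly positive by Lemma \ref{L-2-1} (the Hessian of $\Phi$ equals the Jacobian of $Th$, which is positive definite). Hence $g$ is strictly convex in $t$, so $g'(1)>g'(0)$; unwinding this by the chain rule gives
\[
\big\langle\, \nabla\Phi(u)-\nabla\Phi(u'),\ u-u'\,\big\rangle\;>\;0,
\]
and in particular $\nabla\Phi(u)\neq\nabla\Phi(u')$, i.e. $Th(u)\neq Th(u')$.

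I do not expect any genuine obstacle here: the substantive work is already contained in Lemmas \ref{L-2-1} and \ref{L-2-2}. The only two points that need a word of care are the identification $\nabla\Phi=Th$ (which is immediate from the path-integral construction of $\Phi$ and the closedness of $\omega$) and the convexity of the domain $\mathbb{R}^{|V|}_{-}$, which legitimizes the one-variable reduction above. With these in hand the corollary follows in a couple of lines.
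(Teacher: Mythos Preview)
Your argument is correct and matches the paper's approach exactly: the paper also notes that $\nabla\Phi=Th$, invokes the strict convexity of $\Phi$ from Lemma~\ref{L-2-2}, and then appeals to the general fact (stated separately as Lemma~\ref{L-2-4}) that the gradient of a strictly convex smooth function on a convex domain is injective. The only cosmetic difference is that you prove this analysis lemma inline via the one-variable reduction $g(t)=\Phi(u'+t(u-u'))$, whereas the paper records it as a standalone lemma.
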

\begin{proof}
Note that $\nabla \Phi =Th$ and $\Phi$ is strictly convex. The statement follows from the following Lemma \ref{L-2-4} from analysis.
\end{proof}

\begin{lemma}\label{L-2-4}
Suppose that $\Omega\subset \mathbb{R}^n$ is convex and the smooth function $h:\Omega\rightarrow \mathbb{R}$
is strictly convex. Then the gradient map $\nabla h:\Omega\rightarrow \mathbb{R}^n$ is injective.
\end{lemma}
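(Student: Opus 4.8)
The statement is the elementary fact that the gradient of a strictly convex function is strictly monotone, hence injective; I would prove it by restricting to line segments. The plan is to argue by contradiction: suppose $x,y\in\Omega$ with $x\neq y$ and $\nabla h(x)=\nabla h(y)$. Since $\Omega$ is convex, the segment $[x,y]$ lies in $\Omega$, so the function
\[
g(t)\,=\,h\big((1-t)x+ty\big),\qquad t\in[0,1],
\]
is well-defined and smooth. First I would observe that $g$ is strictly convex on $[0,1]$: indeed $g''(t)=\langle \mathrm{Hess}\,h\big((1-t)x+ty\big)(y-x),\,y-x\rangle>0$ because $h$ is strictly convex (equivalently, one may invoke directly that the restriction of a strictly convex function to a line segment is strictly convex, without reference to the Hessian). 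Consequently $g'$ is strictly increasing on $[0,1]$.

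Next I would compute the endpoint derivatives by the chain rule:
\[
g'(0)\,=\,\langle \nabla h(x),\,y-x\rangle,\qquad g'(1)\,=\,\langle \nabla h(y),\,y-x\rangle.
\]
The assumption $\nabla h(x)=\nabla h(y)$ forces $g'(0)=g'(1)$, which contradicts the strict monotonicity of $g'$ together with $0<1$. Hence no such pair $x\neq y$ exists, and $\nabla h$ is injective. (One could phrase the same computation as $\langle\nabla h(x)-\nabla h(y),\,x-y\rangle=-\int_0^1 g''(t)\,dt<0$ when $x\neq y$, contradicting $\nabla h(x)=\nabla h(y)$; this is just the integrated form of the above.)

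I do not anticipate a genuine obstacle here — the only points requiring a word of care are that convexity of $\Omega$ is exactly what guarantees $(1-t)x+ty\in\Omega$ for all $t\in[0,1]$ so that $g$ makes sense, and that "strictly convex" is used in the sense that makes $g''>0$ (which is how it arises in Lemma~\ref{L-2-2}, the Hessian being the positive definite Jacobian of $Th$). With these understood, the argument is a two-line application of the mean value theorem.

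\begin{proof}
Suppose, for contradiction, that there exist $x,y\in\Omega$ with $x\neq y$ and $\nabla h(x)=\nabla h(y)$. Since $\Omega$ is convex, the segment joining $x$ and $y$ lies in $\Omega$, so
\[
g(t)\,=\,h\big((1-t)x+ty\big),\qquad t\in[0,1],
\]
is a well-defined smooth function. As $h$ is strictly convex, $g$ is strictly convex on $[0,1]$, and therefore $g'$ is strictly increasing. On the other hand, the chain rule gives
\[
g'(0)\,=\,\big\langle \nabla h(x),\,y-x\big\rangle,\qquad g'(1)\,=\,\big\langle \nabla h(y),\,y-x\big\rangle,
\]
so $\nabla h(x)=\nabla h(y)$ implies $g'(0)=g'(1)$, contradicting the strict monotonicity of $g'$. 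Hence $\nabla h$ is injective.
\end{proof}
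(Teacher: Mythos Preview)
Your proof is correct and is the standard argument. Note that the paper itself states Lemma~\ref{L-2-4} without proof, treating it as a well-known fact from analysis; your segment-restriction argument is exactly the expected justification, so there is nothing to compare.
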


\subsection{Image of the curvature map}

It is ready to characterize the image of the curvature map via the continuity method.

\begin{proof}[\textbf{Proof of Theorem \ref{T-0-4}}]
Let $Z\subset \mathbb R^{|V|}$ be the convex set characterized by the systems of inequalities (\ref{E-1}) and (\ref{E-2}). Here we assume that the curvature map $Th$ is in terms of $r$. It suffices to show that $Th(\mathbb R^{|V|}_{+})=Z$. We have the following claims:
\begin{itemize}
\item[$(i)$] $Th$ is continuous.
\item[$(ii)$] $Th$ is injective. Because the change of variables $r\mapsto u(r)$ is one-to-one, this follows from Corollary \ref{C-2-3}.
\item[$(iii)$] $Th(\mathbb R^{|V|}_{+})\subset Z$. Given any $r\in \mathbb R^{|V|}_{+}$, we need to show that $(K_1(r),\cdots,K_{|V|}(r))$ satisfies the two systems of inequalities. The verification of (\ref{E-1}) is trivial. To check (\ref{E-2}), assume that $A\subset V$ is a non-empty set. Let $E(A), F(A)$ be the sets of edges and triangles of $\mathcal T$ that have at least one vertex in $A$. For $j=1,2,3$, let $F_j(A)$  denote the set of triangles having $j$ vertices in $A$. Note that
\[
|F_1(A)|\,=\,|Lk(A)|.
\]
Using Lemma \ref{L-1-4}, one obtains
\[
\begin{aligned}
\sum\nolimits_{v_i\in A}\sigma(v_i)(r)\,<\,&\sum\nolimits_{(e,v)\in Lk(A)}\big(\pi-\Theta(e)\big)+\pi\big(\,|F_2(A)|+|F_3(A)|\,\big)\\
\,=\,&\sum\nolimits_{(e,v)\in Lk(A)}\big(\pi-\Theta(e)\big)+\pi|F(A)|-\pi|Lk(A)|.
\end{aligned}
\]
It follows that
\[
\sum\nolimits_{v_i\in A}K_i(r)
\,>\,-\sum\nolimits_{(e,v)\in Lk(A)}\big(\pi-\Theta(e)\big)+2\pi|A|-\pi|F(A)|+\pi|Lk(A)|-\pi| A\cap V_\partial|.
\]
Combining it with following Lemma \ref{L-2-5}, we deduce
\[
\sum\nolimits_{v_i\in A}K_i(r)
\,>\,-\sum\nolimits_{(e,v)\in Lk(A)}\big(\pi-\Theta(e)\big)+2\pi\chi(G(A)\setminus \partial S)+\pi\chi( G(A)\cap\partial S).
\]
\item[$(iv)$] The map $Th: \mathbb R^{|V|}_{+}\to Z$ is proper. It suffices to check that one of the inequalities in (\ref{E-1}) and (\ref{E-2}) becomes an equality as some of the radii tend to infinity or zero. To this end, let $\{r^n\}\subset \mathbb{R}_{+}^{|V|}$ be a sequence. In case that $r^n_i\to +\infty$ for some fixed $i\in\{1,2,\cdots,|V|\}$, we need to check that
    \[
    \sigma(v_i)(r^n)\,\to\,0,
    \]
 which is an immediate consequence of  Lemma \ref{L-1-6}. In case that $r^n_i\to 0$ for some  $i\in\{1,2,\cdots,|V|\}$, by Lemma \ref{L-2-5}, it suffices to show
 \[
 \sum\nolimits_{v_i\in A}K_i(r^n)
\,\to \,-\sum\nolimits_{(e,v)\in Lk(A)}\big(\pi-\Theta(e)\big)+2\pi|A|-\pi|F(A)|+\pi|Lk(A)|-\pi|A\cap V_\partial|,
 \]
where $A\subset V$ consists of all vertices $v_i\in V$ such that $r^n_i\to 0$. Equivalently, we need to check that
 \[
 \sum\nolimits_{v_i\in A}\sigma(v_i)(r^n)\,\to \,\sum\nolimits_{(e,v)\in Lk(A)}\big(\pi-\Theta(e)\big)+\pi\big(\,|F_2(A)|+|F_3(A)|\,\big)
 \]
Using Lemma \ref{L-1-5}, the claim follows.
\end{itemize}

By Brouwer's theorem on invariance of domain, the first three claims imply that $Th(\mathbb{R}_{+}^{|V|})$ is a non-empty open set of $Z$. Combining with the fourth claim, $Th(\mathbb{R}_{+}^{|V|})$ is both open and closed in $Z$. Because $Z$ is connected, we have $Th(\mathbb{R}_{+}^{|V|})=Z$.
\end{proof}

\begin{lemma} \label{L-2-5}
For any non-empty subset $A$ of $V$, we have
\[
2|A|-|F(A)|+|Lk(A)|-|A\cap V_\partial|\,=\,2\chi(G(A)\setminus \partial S)+\chi(G(A)\cap\partial S).
\]
\end{lemma}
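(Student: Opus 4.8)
The identity is purely combinatorial, so the plan is to evaluate both sides as alternating counts of cells of $\mathcal T$. First I would unwind the definitions: $G(A)$ is the union of the open cells of $\mathcal T$ having a vertex in $A$, i.e. the vertices lying in $A$, the edges in $E(A)$, and the triangles in $F(A)$; and $\chi$ is the combinatorial Euler characteristic, which for such a union of open cells is $\sum_\sigma(-1)^{\dim\sigma}$ and is additive over decompositions into unions of open cells. Writing $E_\partial$ for the set of boundary edges, the subcomplex $\partial S$ meets each open cell of $G(A)$ either in the whole cell or not at all, so $G(A)\cap\partial S$ is the union of the vertices of $A\cap V_\partial$ and the edges of $E(A)\cap E_\partial$, while $G(A)\setminus\partial S$ is the union of the vertices of $A\setminus V_\partial$, the edges of $E(A)\setminus E_\partial$, and all triangles of $F(A)$. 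Counting cells gives
\[
\chi(G(A)\cap\partial S)=|A\cap V_\partial|-|E(A)\cap E_\partial|,\qquad
\chi(G(A)\setminus\partial S)=|A\setminus V_\partial|-|E(A)\setminus E_\partial|+|F(A)|.
\]

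The one substantive step is a double count of edge--triangle incidences. I would prove
\[
2\,|E(A)\setminus E_\partial|+|E(A)\cap E_\partial|\;=\;2\,|F_1(A)|+3\,|F_2(A)|+3\,|F_3(A)|,
\]
where $F_j(A)$ is the set of triangles with exactly $j$ vertices in $A$. The left-hand side counts the pairs $(e,\triangle)$ with $e\in E(A)$ a side of $\triangle$, since an interior edge is a side of two triangles and a boundary edge of exactly one. Every triangle occurring in such a pair has a vertex in $A$, hence lies in $F(A)$; and the decisive observation is that a triangle in $F_1(A)$ has exactly two of its three sides in $E(A)$ (the side opposite the $A$-vertex is not), whereas a triangle in $F_2(A)$ or $F_3(A)$ has all three sides in $E(A)$. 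Summing each triangle's contribution over $F(A)=F_1(A)\sqcup F_2(A)\sqcup F_3(A)$ yields the displayed identity.

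Finally I would substitute the three formulas into $2\chi(G(A)\setminus\partial S)+\chi(G(A)\cap\partial S)$; using $2|A\setminus V_\partial|+|A\cap V_\partial|=2|A|-|A\cap V_\partial|$ and $|F(A)|=|F_1(A)|+|F_2(A)|+|F_3(A)|$, the terms telescope to $2|A|-|A\cap V_\partial|-|F_2(A)|-|F_3(A)|$, which is $2|A|-|F(A)|+|F_1(A)|-|A\cap V_\partial|$. Since $|F_1(A)|=|Lk(A)|$ via the bijection $\triangle\mapsto(e,v)$ with $v$ the vertex of $\triangle$ in $A$ and $e$ the opposite edge (already recorded in the proof of Theorem \ref{T-0-4}), this is exactly the right-hand side of the lemma.

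I expect the only real difficulty to be bookkeeping rather than content: one must read $G(A)$ as the open star of $A$ and $\chi$ as the combinatorial Euler characteristic, so that $\chi(G(A)\setminus\partial S)$ and $\chi(G(A)\cap\partial S)$ are genuinely alternating cell counts and behave additively; with a ``closed star / ordinary Euler characteristic'' reading the asserted equality simply fails (for instance when $A$ is a single interior vertex whose link meets $\partial S$). Once these conventions are fixed, the entire lemma reduces to the small fact that the defect $3-\#\{\text{sides of }\triangle\text{ in }E(A)\}$ equals $1$ precisely for the triangles counted by $|Lk(A)|$ and $0$ for all other triangles in $F(A)$, which is what produces the $+|Lk(A)|$ term.
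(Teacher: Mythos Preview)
Your proof is correct and follows essentially the same route as the paper. The paper's key identity $2|E_O(A)|+|E_\partial(A)|+|Lk(A)|=3|F(A)|$ is exactly your double count $2|E(A)\setminus E_\partial|+|E(A)\cap E_\partial|=2|F_1(A)|+3|F_2(A)|+3|F_3(A)|$ rewritten using $|Lk(A)|=|F_1(A)|$, and both proofs then reduce to the same cell-count formulas $\chi(G(A)\setminus\partial S)=|A\setminus V_\partial|-|E(A)\setminus E_\partial|+|F(A)|$ and $\chi(G(A)\cap\partial S)=|A\cap V_\partial|-|E(A)\cap E_\partial|$ followed by identical algebra; your remarks on the open-star reading of $G(A)$ and the combinatorial Euler characteristic are a helpful clarification of conventions the paper leaves implicit.
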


We postpone the proof to Appendix.

\subsection{Completeness}
Let us consider the combinatorial Ricci flow (\ref{E-5}). Using the substitution $u_i=\ln \tanh (r_i/2)$ for $i=1,2,\cdots,|V|$, one
rewrites it as an autonomous ODE system
\begin{equation}\label{E-13}
\frac{du_i}{dt}\,=\,-K_i.
\end{equation}
For simplicity, we shall not distinguish the $r$-flow (\ref{E-5}) and the $u$-flow (\ref{E-13}), since there is no substantial difference between them.

Because each $K_i$ depends on $u$ smoothly, $(K_1,K_2,\cdots,K_{|V|})$ is locally Lipschitz continuous in $\mathbb{R}^{|V|}_{-}$.  By the well-known Picard theorem in classical ODE theory, the $u$-flow (\ref{E-13}) has a unique solution $u(t)$, where $t\in[0, \epsilon)$ for some $\epsilon>0$. Hence for any initial value $u(0)\in \mathbb{R}^{|V|}_{-}$, the solution to the $u$-flow (\ref{E-13}) uniquely exists in a maximal time interval $[0,T_0)$ with $0<T_0\leq +\infty$. As a result, the solution to the $r$-flow (\ref{E-5}) also uniquely exists in a maximal interval $[0,T_0)$. The next lemma shows that $T_0=+\infty$.

\begin{lemma}\label{L-2-6}
The flow (\ref{E-5}) has a unique solution $r(t)$ which exists for all time $t\geq0$. Moreover, each $r_i(t)$ is  bounded from above in $[0,+\infty)$.
\end{lemma}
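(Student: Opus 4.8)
The plan is to analyse the autonomous $u$-flow (\ref{E-13}) --- equivalent to the $r$-flow (\ref{E-5}), as noted above --- and to establish two things: that each $u_i(t)$ stays below a fixed negative constant along the flow (which is exactly the claimed upper bound on $r_i$), and that the trajectory cannot leave $\mathbb{R}^{|V|}_-$ in finite time (which forces the maximal time $T_0$ to be $+\infty$). Recall, from the discussion preceding the lemma, that a unique solution $u(t)$ already exists on a maximal interval $[0,T_0)$ with $0 < T_0 \le +\infty$; all the work lies in controlling it.

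For the upper bound I would run a maximum-principle argument, with Lemma \ref{L-1-6} as the essential input. Applying that lemma to each of the finitely many triangles incident to a vertex $v_i$, say with $\epsilon = \pi/\big(2|F(\{v_i\})|\big)$, yields a threshold $L_i>0$ such that $r_i > L_i$ forces $\sigma(v_i) < \pi/2$, and hence $K_i > 0$ --- for interior and for boundary vertices alike. Taking a maximum over $i$ and passing to the $u$-variable, there is a constant $u^\ast < 0$ such that $u_i > u^\ast$ implies $du_i/dt = -K_i < 0$. I would then fix any constant $C_0$ with $\max\{\,u^\ast,\ \max_i u_i(0)\,\} < C_0 < 0$ and claim that $\max_i u_i(t) < C_0$ throughout $[0,T_0)$: at a hypothetical first time $t_1$ at which $t\mapsto\max_i u_i(t)$ reaches the level $C_0$, the maximizing coordinate $u_{i_0}$ would satisfy $du_{i_0}/dt\,(t_1) \ge 0$ (it approaches $C_0$ from below), while $u_{i_0}(t_1) = C_0 > u^\ast$ gives $du_{i_0}/dt\,(t_1) < 0$ --- a contradiction. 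Converting back, $r_i(t) < 2\,\mathrm{arctanh}(e^{C_0})$ for all $i$ and all $t$, which is the asserted bound.

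With the upper bound secured, completeness is soft. By Lemma \ref{L-1-4} every inner angle is strictly positive, so $\sigma(v_i)>0$ and hence $K_i < 2\pi$ for every radius vector; therefore $du_i/dt = -K_i > -2\pi$, giving $u_i(t) \ge u_i(0) - 2\pi t$. Combined with $u_i(t) < C_0 < 0$, on every interval $[0,T]$ with $T < T_0$ the trajectory $u(t)$ stays in a compact subset of $\mathbb{R}^{|V|}_-$, so the standard ODE escape principle yields $T_0 = +\infty$; the statement for the $r$-flow follows since $r \mapsto \ln\tanh(r/2)$ is a diffeomorphism of $\mathbb{R}^{|V|}_+$ onto $\mathbb{R}^{|V|}_-$. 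The only genuine obstacle is the upper bound, and within it the reliance on Lemma \ref{L-1-6}: the asymptotics $\vartheta_i \to 0$ as $r_i \to \infty$ is precisely what stops the largest radius from running off to infinity. The one point that needs care is the strict placement of $C_0$ between $u^\ast$ and $0$ (and above the initial data), which is what makes the contradiction in the maximum-principle step strict.
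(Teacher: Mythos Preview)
Your proof is correct and follows essentially the same route as the paper: Lemma~\ref{L-1-6} is the key input for the upper bound (once $r_i$ exceeds a threshold, $K_i>0$ forces $r_i$ to decrease), and a crude bound on $K_i$ handles the lower escape in finite time. The paper carries out the upper-bound step as a direct contradiction in the $r$-variable (assuming $\overline{\lim}\,r_i=+\infty$ and finding an interval on which $r_i'<0$), while you phrase it as a barrier/maximum-principle argument at a fixed level $C_0$ in the $u$-variable; these are cosmetic variants of the same idea.
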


\begin{proof}
Observe that each $|K_i|$ is uniformly bounded by a positive constant $c$, which depends only on the combinatorial information of the triangulation $\mathcal{T}.$ Using (\ref{E-13}), for each vertex $v_i\in V$, we derive
\[
-ct\,\leq\, u_i(t)-u_i(0)\,\leq\, ct.
\]
 It follows that
\[
r_i(t)\,\geq\,\ln \frac{1+m_ie^{-ct}}{1-m_ie^{-ct}}\,>\,0,
\]
where $m_i=\tanh (r_i(0)/2)\in(0,1)$.
This means  $r_i(t)$ never touches the $0$-boundary of $\mathbb{R}^{|V|}_{+},$ i.e. the topological boundary $\partial \mathbb{R}^{|V|}_{+}$, in any finite time interval.

It remains to show $r_i(t)$ is  bounded from above in $[0, T_0)$ for $i=1,2,\cdots,|V|$. Assume that it is not true. Then there exists at least one vertex $v_i\in V$ such that $\overline{\lim\nolimits}_{t\uparrow T_0}~r_i(t)=+\infty$.
For such $i$, by Lemma \ref{L-1-6}, there exists $L>0$ sufficiently large such that $K_i>\pi/2$ whenever $r_i>L$. Since $\overline{\lim\nolimits}_{t\uparrow T_0}~r_i(t)=+\infty$, we can choose $b\in(0,T_0)$ such that $r_i(b)>L$. Set
\[
a\,=\,\inf_{s}\,\{\,s\in [0,b)\,|\,r_i(t)>L,\,\forall t\in [s,b]\,\}.
\]
One is ready to see that $r_i(a)=L$. Moreover, for $t\in [a,b]$, note that
\[
r'_i(t)\,=\,-K_i\sinh r_i\,<\,0.
 \]
Hence $r_i(b)\leq r_i(a)=L$, which contradicts to $r_i(b)>L$. As a result, each $r_i(t)$ has an  upper bound in $[0,T_0)$. It follows that $r(t)$ never touches the $+\infty$-boundary of $\mathbb{R}^{|V|}_{+}$.

To summarise, $r(t)$ stays in a compact set in $\R^{|V|}_+$. The classical ODE theory then implies $T_0=+\infty$. Consequently, the flow (\ref{E-5}) exists for all time $t\geq 0$ and each $r_i(t)$ is bounded from above in $[0,+\infty)$.
\end{proof}

\subsection{Convergence}
\begin{proposition}\label{P-2-7}
 If the solution $r(t)$ to the flow (\ref{E-5}) converges to a vector $r^\ast\in \mathbb R^{|V|}_{+}$, then  $r^\ast$ produces a smooth hyperbolic metric on $S$.
\end{proposition}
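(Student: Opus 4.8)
The plan is to show that the limit radius vector $r^\ast$ lies in the open orthant $\mathbb R^{|V|}_{+}$ (so that no radius degenerates to $0$ or escapes to $+\infty$) and that the vertex curvatures all vanish at $r^\ast$; together these give a smooth constant-curvature $-1$ cone metric with no cone singularities, i.e. an honest hyperbolic metric on $S$. The first half is essentially already in hand: by Lemma~\ref{L-2-6} the flow stays in a compact subset of $\mathbb R^{|V|}_{+}$ for all $t\ge0$, so if $r(t)$ converges at all, its limit $r^\ast$ has all coordinates strictly positive and finite. Hence it remains only to prove $K_i(r^\ast)=0$ for every $i$.

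For this I would use the standard Lyapunov-function argument built on the energy $\Phi$ from \eqref{E-12}. Since $\nabla\Phi = Th$ in the $u$-coordinates and the $u$-flow \eqref{E-13} reads $\dot u_i=-K_i$, we have
\[
\frac{d}{dt}\,\Phi(u(t))\;=\;\sum\nolimits_{i=1}^{|V|} K_i\,\dot u_i\;=\;-\sum\nolimits_{i=1}^{|V|} K_i^2\;\le\;0,
\]
so $\Phi$ is non-increasing along the flow. If $r(t)\to r^\ast$, then $u(t)\to u^\ast$ where $u_i^\ast=\ln\tanh(r_i^\ast/2)\in\mathbb R_{-}$, and $\Phi(u(t))\to\Phi(u^\ast)$ by continuity of $\Phi$. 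Now consider the smooth function $t\mapsto \sum_i K_i(u(t))^2 = -\tfrac{d}{dt}\Phi(u(t))$; its integral over $[0,\infty)$ equals $\Phi(u(0))-\Phi(u^\ast)<\infty$. Because $u(t)$ converges and the $K_i$ are smooth (hence the right-hand side of the ODE is smooth), $u(t)$ and $\dot u(t)=-K(u(t))$ both converge, which forces $\sum_i K_i(u(t))^2$ to converge; an integrable function with a limit must have that limit equal to $0$. Therefore $K_i(u^\ast)=0$ for all $i$, i.e. $K_i(r^\ast)=0$.

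Finally I would translate this back to geometry: with all $K_i(r^\ast)=0$, the cone angle at each interior vertex is $2\pi$ and at each boundary vertex is $\pi$, so gluing the hyperbolic triangles determined by $r^\ast$ (which are non-degenerate by Lemma~\ref{L-1-1}, using $\mathbf{(C1)}$) produces a hyperbolic metric $\mu$ on $S$ with no cone singularities and with totally geodesic boundary. The main obstacle, such as it is, is the ODE-theoretic step of passing from the finiteness of $\int_0^\infty \sum_i K_i^2\,dt$ to the vanishing of the limit; this needs the observation that convergence of $u(t)$ together with smoothness of the vector field makes $\dot u(t)$ converge as well (otherwise an integrable function need not tend to $0$). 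Everything else is either cited (Lemma~\ref{L-2-6}, Lemma~\ref{L-1-1}) or a direct unwinding of definitions.
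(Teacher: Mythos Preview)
Your argument is correct, but it takes a longer route than the paper's. The paper dispatches the whole thing in two lines with the mean value theorem: since $u(t)\to u^\ast$, for each $n$ there is $\zeta_n\in(n,n+1)$ with $u_i(n+1)-u_i(n)=u_i'(\zeta_n)=-K_i(u(\zeta_n))$; the left side tends to $0$ and the right side tends to $-K_i(u^\ast)$ by continuity, so $K_i(u^\ast)=0$. Your detour through the energy function $\Phi$ to obtain $\int_0^\infty \sum_i K_i^2\,dt<\infty$, followed by the observation that an integrable function with a limit must have limit zero, reaches the same conclusion but uses more machinery than the situation demands---indeed, once you note that $K_i(u(t))\to K_i(u^\ast)$ by continuity (which you do), the energy computation is already redundant. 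Two minor remarks: the appeal to Lemma~\ref{L-2-6} is unnecessary, since the hypothesis already places $r^\ast$ in the open orthant; and the closing geometric paragraph about totally geodesic boundary, while correct, goes slightly beyond what Proposition~\ref{P-2-7} asserts.
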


\begin{proof}
Obviously, $u(t)$ converges to $u^\ast$, where $u^\ast\in\mathbb R^{|V|}_{-}$ is the vector corresponding to $r^\ast$.
For any positive integer $n$, by the mean value theorem there exists $\zeta_n\in(n,n+1)$ such that
\[
u_i(n+1)-u_i(n)\,=\,u'_i(\zeta_n)\,=\,-K_i(u(\zeta_n)).
\]
As $n\rightarrow+\infty$, we have $u_i(n+1)-u_i(n)\to 0$ and $K_i(u(\zeta_n))\to K_i(u^\ast)$. Thus
 \[
 K_i(u^\ast)\,=\,0,
 \]
 which concludes the proposition.
\end{proof}

The following lemma is a standard result from the analysis. See, e.g., \cite{Ge-thesis,Ge-Hua}.
\begin{lemma}\label{L-2-8}
Let $h$ be a strictly convex smooth function defined in a convex set $\Omega\subset\R^n$ with a critical point $p\in \Omega$. Then the following two properties hold:
\begin{itemize}
\item[$(i)$] $p$ is the unique global minimum point of $h$.
\item[$(ii)$] If $\Omega$ is unbounded, then $
\lim\limits_{\|x\|\to+\infty}h(x)\,=\,+\infty$.
\end{itemize}
\end{lemma}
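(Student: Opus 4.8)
The plan is to use the elementary machinery of convex functions on convex sets, together with the energy-function technology already in place. For part $(i)$, first I would recall that for a strictly convex smooth $h$, the gradient inequality holds: for any $x,y\in\Omega$ with $x\ne y$,
\[
h(y)\,>\,h(x)+\langle\nabla h(x),\,y-x\rangle.
\]
Applying this at $x=p$ and using $\nabla h(p)=0$ gives $h(y)>h(p)$ for every $y\ne p$, so $p$ is the unique global minimum. (Alternatively, one restricts $h$ to the segment from $p$ to an arbitrary $y$, obtaining a strictly convex function of one variable whose derivative vanishes at the endpoint corresponding to $p$; this derivative is then strictly positive on $(p,y]$, forcing $h$ to be strictly increasing along the segment.)

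For part $(ii)$, the idea is that strict convexity forces at least linear growth away from $p$ once one leaves a fixed bounded neighbourhood. Concretely, fix the global minimum $p$ from part $(i)$, fix any radius $\rho>0$ small enough that the closed ball $\overline{B}(p,\rho)\subset\Omega$, and set $m=\min_{\|x-p\|=\rho}h(x)-h(p)>0$, the minimum being attained and strictly positive by part $(i)$ and compactness of the sphere. Now for any $x\in\Omega$ with $\|x-p\|>\rho$, the point $z=p+\rho\,\tfrac{x-p}{\|x-p\|}$ lies on that sphere and on the segment $[p,x]$, so writing $z$ as the convex combination $z=(1-\lambda)p+\lambda x$ with $\lambda=\rho/\|x-p\|\in(0,1)$, convexity yields
\[
h(z)\,\le\,(1-\lambda)h(p)+\lambda h(x),
\]
hence $h(x)\ge h(p)+\lambda^{-1}\big(h(z)-h(p)\big)\ge h(p)+\tfrac{\|x-p\|}{\rho}\,m$. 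As $\|x\|\to+\infty$ inside $\Omega$ we have $\|x-p\|\to+\infty$, so $h(x)\to+\infty$, which is the claim. Note that since $\Omega$ is unbounded this last step is vacuously compatible; the convexity of $\Omega$ guarantees $z\in\Omega$, so the inequality is legitimate.

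The only mild subtlety — and the step I would flag as the one needing care — is making sure every auxiliary point used (the sphere point $z$, the segment $[p,x]$) genuinely lies in $\Omega$; this is exactly where convexity of $\Omega$ is invoked, and it is what allows the one-variable restriction argument and the convex-combination estimate to go through without boundary issues. Everything else is the standard first-order characterisation of strict convexity. This lemma will then be combined with Lemma \ref{L-2-2} (strict convexity of the energy $\Phi$) and the completeness of the flow (Lemma \ref{L-2-6}) to run the convergence argument for \eqref{E-5}.
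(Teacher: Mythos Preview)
Your argument is correct and is the standard proof; the paper itself does not prove Lemma~\ref{L-2-8} at all, but simply records it as ``a standard result from the analysis'' with references to \cite{Ge-thesis,Ge-Hua}. So there is nothing to compare against, and your write-up would serve perfectly well as the omitted proof.

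One small point to tighten in part $(ii)$: the existence of $\rho>0$ with $\overline{B}(p,\rho)\subset\Omega$ requires $p$ to be an \emph{interior} point of $\Omega$, which follows if $\Omega$ is open (as it is in all of the paper's applications, e.g.\ $\Omega=\mathbb{R}^{|V|}_{-}$) but not from convexity alone. You flag the containment of the sphere point $z$ as the delicate step and attribute it to convexity of $\Omega$, but convexity only guarantees the segment $[p,x]\subset\Omega$; it is openness (or interiority of $p$) that puts the entire sphere $\{\,\|x-p\|=\rho\,\}$ inside $\Omega$ and makes the uniform lower bound $m>0$ available.
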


\begin{proof}[\textbf{Proof of Theorem \ref{T-0-7}}]
By Proposition \ref{P-2-7}, the implication $``H_1\Rightarrow H_2"$ is straightforward.

To see $``H_2\Rightarrow H_1"$, let $r^\ast$ be the radius vector such that $K_i(r^\ast)=0$ for $i=1,2,\cdots,|V|$ and let $u^\ast$ be the vector corresponding to $r^\ast$. Consider the energy function $\Phi$ defined as (\ref{E-12}). Then $\nabla \Phi|_{u^\ast}=0$. That means $u^\ast$ is a critical point of $\Phi$. Because $\Phi$ is strictly convex, by Lemma \ref{L-2-8}, $u^\ast$ is a global minimal point of $\Phi$. Meanwhile,
\[
\frac{d\Phi(u(t))}{dt}\,=\,-\sum\nolimits_{i=1}^{|V|}\frac{\partial\Phi}{\partial u_i} u_i'(t)\,=\,-\sum\nolimits_{i=1}^{|V|}K_i^2\,\leq\,0.
\]
It follows that $\Phi(u(t))$ is descending and bounded from below. Hence $\Phi(u(+\infty))$ exists.

We claim that $\{u(t):t\in[0,+\infty)\}$ is compact in $\mathbb{R}^{|V|}_-$. On the one hand, each $u_i(t)$ is bounded from below. Otherwise, there exists $t_n\to+\infty$ such that $\|u(t_n)\|\to +\infty$. Due to Lemma \ref{L-2-8},
\[
\lim_{n\to\infty}\Phi(u(t_n))\,=\,+\infty,
\]
which contradicts to the convergence of $\Phi(u(t))$. On the other hand, for each $u_{i}(t)$, by Lemma \ref{L-2-6}, there exists a constant $L>0$ such that
\[
u_{i}(t)\,=\,\ln\tanh \frac{r_{i}(t)}{2}\,<\,\ln\tanh \frac{L}{2}.
\]
Because $u(t)$ is bounded from below and above, we prove the claim.

It remains to show that $u(t)$ converges as $t\to+\infty$. For each positive integer $n$, one can choose $\zeta_n\in(n,n+1)$ satisfying
\[
\Phi(u(n+1))-\Phi(u(n))\,=\,\Phi'(u(\zeta_n))\,=\,-\sum\nolimits_{i=1}^{|V|}K_i^2(u(\zeta_n).
\]
As $n\to\infty$, $\Phi(u(n+1))-\Phi(u(n))\to 0$, we obtain
\[
K_i(u(\zeta_n))\,\to\, 0
\]
for $i=1,2,\cdots,|V|$. Let us pick up a subsequence $\{u(\zeta_{n_k})\}$ convergent to $\tilde{u}\in\mathbb{R}^{|V|}_{-}$. It is easy to see that $\tilde{u}$ is also a critical point. Using Lemma \ref{L-2-8}, we have  $\tilde{u}=u^\ast$, which implies $u(\zeta_{n_k})\to u^\ast$. Moreover, a similar argument shows that any convergent subsequence of $u(t)$ tends to $u^\ast$. As a result, $u(t)\to u^\ast$.

In summary, we show that $``H_1\Leftrightarrow H_2"$. Meanwhile, $``H_2\Leftrightarrow H_3"$ is an immediate consequence of Theorem \ref{T-0-4}. Finally, the relation $``H_3\Leftrightarrow H_4"$ follows from  Gauss-Bonnet inequality (\ref{E-4}) and the following Proposition \ref{P-2-9}.

It remains to consider the exponential convergence of the flow. Using (\ref{E-13}), we have
\[
\frac{dK_i}{d t}\,=\,-\sum\nolimits_{j=1}^{|V|}\frac{\partial K_i}{\partial u_j}K_j
\]
Set
\[
M(t)\,=\,\max\big\{\,K_1(r(t)),\cdots,K_{|V|}(r(t))\,\big\}.
\]
Applying (\ref{E-11}), a routine computation gives
\[
\frac{d M(t)}{dt}\,\leq\,-\;M(t) \sum\nolimits_{\triangle\in F(\{v_l\})}\frac{\partial\mathrm{Area}(\triangle)}{\partial u_l},
\]
where $l\in \{1,2,\cdots,|V|\}$ satisfies $K_l(r(t))=M(t)$. Since $\{u(t):t\in[0,+\infty)\}$ is compact, standard arguments imply that there exists a positive number $c_1$ such that
\[
M(t)\leq\,M(0)e^{-c_1t}.
\]
Hence there exists $c_2>0$ satisfying
\[
\frac{d r_i}{d t}\,=\,-K_i\sinh r_i\,\geq\,-c_2e^{-c_1t},
\]
It follows that
\[
r_i(t)-r_i^\ast\,=\,-\int\nolimits_{t}^{+\infty}r'_i(s)\,ds\,\leq \frac{c_2}{c_1}e^{-c_1t}.
\]
Similarly, there exist positive numbers $c_3,c_4$ such that
\[
r_i(t)-r_i^\ast\,\geq \frac{c_4}{c_3}e^{-c_3t}.
\]
We thus finish the proof.
\end{proof}

The following combinatorial fact is a special case of Proposition \ref{P-4-3} in Section \ref{Further}.
\begin{proposition}\label{P-2-9}
$H_3$  holds for all proper non-empty subset $A$ of $V$ if and only if the conditions $\mathbf{(C2)}$, $\mathbf{(C3)}$ are satisfied.
\end{proposition}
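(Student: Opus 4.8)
The plan is to read $H_3$ as a family of inequalities, one for each non-empty proper $A\subsetneq V$, and to match them against $\mathbf{(C2)}$ and $\mathbf{(C3)}$ after reducing to those $A$ for which $G(A)$ is a topological disk or half-disk. For brevity set
\[
\Psi(A)\ :=\ \sum\nolimits_{(e,v)\in Lk(A)}\big(\pi-\Theta(e)\big)\ -\ \pi\big(2\chi(G(A)\setminus\partial S)+\chi(G(A)\cap\partial S)\big),
\]
so that $H_3$ for $A$ is precisely $\Psi(A)>0$. Since every term $\pi-\Theta(e)$ is strictly positive, $\Psi(A)\le 0$ already forces $2\chi(G(A)\setminus\partial S)+\chi(G(A)\cap\partial S)\ge 1$. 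Two further facts will carry the argument. First, $\Psi$ is superadditive under decomposition: if $G(A)$ falls into several components, or is pinched at a non-manifold vertex (necessarily outside $A$), then partitioning $A$ accordingly expresses $\Psi(A)$ as the sum of the $\Psi$ of the pieces plus a non-negative multiple of $\pi$; this follows by splitting the sum over $Lk$ and using additivity of $\chi$, together with the combinatorial identity of Lemma~\ref{L-2-5}. Second, $\mathbf{(C1)}$ yields, for the three exterior angles of any combinatorial triangle, the inequality $\Theta_i+\Theta_j-\Theta_k\le\pi$: it is trivial when $\Theta_i+\Theta_j\le\pi$, and when $\Theta_i+\Theta_j>\pi$ one checks from $\mathbf{(C1)}$ that the three angles cannot all exceed $\pi/2$, after which the claim reduces to the cosine addition formula combined with $\mathbf{(C1)}$.

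For the implication $\mathbf{(C2)},\mathbf{(C3)}\Rightarrow H_3$, I would suppose $\Psi(A)\le 0$ for some non-empty proper $A$ and take one with $|A|$ minimal. For this minimal choice superadditivity forces $G(A)$ to be connected and free of pinch points (else a smaller counterexample appears), hence a genuine compact surface; since $2\chi(G(A))\ge 2\chi(G(A)\setminus\partial S)+\chi(G(A)\cap\partial S)\ge 1$ and $\chi(S)\le 0$ rules out a sphere component, $G(A)$ must be a topological disk meeting $\partial S$ in either no arc or exactly one arc. Its frontier in $S$ is a closed edge-path $e_1\cup\cdots\cup e_s$; in the first case this is a pseudo-Jordan curve whose enclosing set contains $A$, in the second a semi pseudo-Jordan arc whose semi enclosing set contains $A$, so in both cases it is non-vacant. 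By construction every triangle of $G(A)$ has a vertex in $A$, so writing $e_\triangle$ for the edge of a triangle $\triangle\in F_1(A)$ opposite its vertex in $A$, each boundary edge $e_l$ equals $e_\triangle$ for a unique $\triangle\in F_1(A)$; hence $\sum_{(e,v)\in Lk(A)}(\pi-\Theta(e))\ge\sum_{l=1}^{s}(\pi-\Theta(e_l))$. Substituting into $\Psi(A)\le 0$ and using $2\chi(G(A)\setminus\partial S)+\chi(G(A)\cap\partial S)=2$ (resp.\ $=1$) gives $\sum_{l}\Theta(e_l)\ge (s-2)\pi$ (resp.\ $\ge (s-1)\pi$), contradicting $\mathbf{(C2)}$ (resp.\ $\mathbf{(C3)}$).

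For the converse, assume $\mathbf{(C2)}$ fails and fix a non-vacant pseudo-Jordan curve with angle sum $\ge(s-2)\pi$ whose bounded disk $\overline{\mathbb K}$ uses the fewest triangles of $\mathcal T$; let $A$ be the set of vertices interior to $\overline{\mathbb K}$ (non-empty by non-vacancy; proper because $\chi(S)\le 0$ forbids $\overline{\mathbb K}=S$). The crux is to show $G(A)=\overline{\mathbb K}$ and that $\overline{\mathbb K}$ has no diagonal edge joining two boundary vertices. Any triangle of $\overline{\mathbb K}$ all of whose vertices lie on the boundary carries such a diagonal $c$ (unless it is all of $\overline{\mathbb K}$, excluded by non-vacancy); cutting $\overline{\mathbb K}$ along $c$ produces two triangulated disks with boundary-edge counts $s_1+s_2=s+2$, and since $\Theta(c)\ge 0$ and the original angle sum is $\ge(s-2)\pi$, at least one piece again has angle sum $\ge(s_i-2)\pi$. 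If that piece still contains an interior vertex, minimality is contradicted at once; if not, it is a triangulated polygon, which has an ear triangle not incident to $c$, and deleting this ear from $\overline{\mathbb K}$ — using $\Theta_i+\Theta_j-\Theta_k\le\pi$ to see the new, shorter boundary curve still has angle sum $\ge(s-3)\pi$ and remains non-vacant — again contradicts minimality. Once $G(A)=\overline{\mathbb K}$ has no diagonals, $\triangle\mapsto e_\triangle$ is a bijection from $F_1(A)$ onto $\{e_1,\dots,e_s\}$, so $\sum_{(e,v)\in Lk(A)}(\pi-\Theta(e))=s\pi-\sum_l\Theta(e_l)\le 2\pi$, while the Euler-characteristic count for a triangulated disk gives $2\chi(G(A)\setminus\partial S)+\chi(G(A)\cap\partial S)=2$; hence $\Psi(A)\le 0$. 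The case in which $\mathbf{(C3)}$ fails is parallel: $\overline{\mathbb K}$ is a half-disk meeting $\partial S$ in one arc $\lambda$, $A$ is a non-empty semi enclosing set, the relevant count equals $1$, and the bound becomes $\sum_l\Theta(e_l)\ge(s-1)\pi$.

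The step I expect to be the genuine obstacle is the claim in the converse that the minimal bad region satisfies $G(A)=\overline{\mathbb K}$ with no diagonals: the chord-and-ear reduction must be arranged so that each step strictly lowers the triangle count while simultaneously preserving non-vacancy and the bad angle inequality, and the combinatorics become delicate when the region meets $\partial S$ — so that diagonals and ears may have endpoints on the arc $\lambda$, and one must further show that a minimal witness to $\mathbf{(C2)}$ (resp.\ $\mathbf{(C3)}$) bounds a region disjoint from $\partial S$ (resp.\ meeting $\partial S$ in exactly one arc) — and when the pseudo-Jordan curve is not simple. A secondary technical point, used on the $\mathbf{(C2)},\mathbf{(C3)}\Rightarrow H_3$ side, is the explicit handling of non-manifold points of $G(A)$ in the superadditivity and minimality arguments; this is routine but cannot be skipped, since $G(A)$ need not be a surface.
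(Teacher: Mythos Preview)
Your forward implication $\mathbf{(C2)},\mathbf{(C3)}\Rightarrow H_3$ is essentially the paper's argument: reduce (the paper says ``without loss of generality'', you justify via superadditivity) to the case where $G(A)\setminus\partial S$ is connected, case-split on its topological type, and in the only non-trivial case---the simply-connected one---identify the edges of $Lk(A)$ with a pseudo-Jordan curve (or semi pseudo-Jordan arc) having $A$ as enclosing set.

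Your converse, however, is far more elaborate than the paper's. The paper's proof of $H_3\Rightarrow\mathbf{(C2)}$ is one line: given \emph{any} non-vacant pseudo-Jordan curve $\gamma$ with enclosing set $A_\gamma$, it simply asserts that $G(A_\gamma)\setminus\partial S$ is the simply-connected region with boundary $\gamma$ (hence $\chi=1$, $G(A_\gamma)\cap\partial S=\emptyset$, and $Lk(A_\gamma)$ is exactly $\{(e_l,v_{i_l})\}_{l=1}^s$), and reads off $\sum_l(\pi-\Theta(e_l))>2\pi$ from attainability; the $\mathbf{(C3)}$ case is parallel. There is no minimality, no chord-and-ear reduction, and no use of $\mathbf{(C1)}$ via $\Theta_i+\Theta_j-\Theta_k\le\pi$. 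What your longer route buys is robustness against the situation you flagged: if $\overline{\mathbb K}$ contains a triangle with all three vertices on $\gamma$, then $G(A_\gamma)\subsetneq\overline{\mathbb K}$ and the paper's identification of $Lk(A_\gamma)$ with the edges of $\gamma$ is not literally correct. The paper glosses over this, while your minimality-plus-ear argument (powered by the $\mathbf{(C1)}$ inequality) is precisely what would close that gap. So the approaches differ exactly at the point you identified as the obstacle: the paper treats it as a non-issue, and if you accept that, the converse collapses to two sentences.
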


\section{Euclidean background geometry}\label{sec:euclidean}

\subsection{Curvature map and energy function}
For $i=1,2,\cdots,|V|$, using the change of variables $u_i=\ln r_i$, one regards the vertex curvatures as smooth functions of $u=(u_1,\cdots,u_{|V|})$.

\begin{lemma}\label{L-3-1}
The Jacobian matrix of  the curvature map in terms of $u$ is symmetric and positive semi-definite. Moreover, when restricted in $\Xi_d=\{u\in\mathbb R^{|V|}:\sum_{i=1}^{|V|} u_i= d\}$ ($d$ is a given real number), it is  positive definite.
\end{lemma}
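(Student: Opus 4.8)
The plan is to mimic the structure of the hyperbolic case (Lemma \ref{L-2-1}), since the Euclidean three-circle estimates in Lemma \ref{L-1-3}(i) are set up in exactly parallel form. First I would establish symmetry: for a pair of non-adjacent vertices $v_i,v_j$ the mixed partials $\partial K_i/\partial u_j$ and $\partial K_j/\partial u_i$ both vanish, since no triangle contains both; for an adjacent pair $[v_i,v_j]=e$ one expands $\partial K_i/\partial u_j$ as a sum of $-\partial\vartheta_i^{\triangle}/\partial u_j$ over the one or two triangles $\triangle$ incident to $e$ (one triangle if $e\subset\partial S$, two otherwise), and invokes the symmetry relation $r_j\,\partial\vartheta_i/\partial r_j = r_i\,\partial\vartheta_j/\partial r_i$ from Lemma \ref{L-1-3}(i). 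Under the change of variables $u_i=\ln r_i$ one has $\partial/\partial u_i = r_i\,\partial/\partial r_i$, so this relation says precisely $\partial\vartheta_i^{\triangle}/\partial u_j = \partial\vartheta_j^{\triangle}/\partial u_i$, giving $\partial K_i/\partial u_j=\partial K_j/\partial u_i\le 0$. Hence the Jacobian $J$ is symmetric.

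Next I would show $J$ is positive semi-definite and identify its kernel. By Lemma \ref{L-1-4}, in Euclidean geometry $\vartheta_i+\vartheta_j+\vartheta_k=\pi$ for every triangle, so the cone angle $\sigma(v_i)=\sum_{\triangle\in F(\{v_i\})}\vartheta_i^{\triangle}$ is homogeneous of degree $0$ in $r$, i.e. invariant under simultaneous scaling $r\mapsto\lambda r$. Therefore $\sigma(v_i)$, and hence $K_i$, depends only on the differences $u_j-u_k$; equivalently the all-ones vector $\mathbf 1$ lies in $\ker J$, and $\sum_j \partial K_i/\partial u_j = 0$ for every $i$. Combined with $\partial K_i/\partial u_j\le 0$ for $j\ne i$, this forces $\partial K_i/\partial u_i = -\sum_{j\ne i}\partial K_i/\partial u_j\ge 0$, and moreover $J$ is weakly diagonally dominant with nonpositive off-diagonal entries and zero row sums — a (negative-of-a-)graph-Laplacian-type matrix. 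Such a symmetric matrix is positive semi-definite, and its kernel consists exactly of the vectors constant on each connected component of the ``positive-coupling graph'' whose edges are the pairs $(i,j)$ with $\partial K_i/\partial u_j<0$. Since $\mathcal T$ is connected and $\partial\vartheta_i/\partial r_j<0$ strictly except in the degenerate case $\Theta_k=0,\ \Theta_i+\Theta_j=\pi$, I need to argue that the positive-coupling graph is still connected: each edge of $\mathcal T$ either contributes a strict coupling, or, when it does not, the degeneracy condition at that edge together with the other edges of the incident triangles still links the relevant vertices. This connectivity argument is the step I expect to be the main obstacle — one must rule out that the whole triangulation degenerates in a way that disconnects the coupling graph, using that each $\Theta\in[0,\pi)$ and examining the constraints $\Theta_k=0$, $\Theta_i+\Theta_j=\pi$ can impose on all triangles simultaneously; I would handle it by a short combinatorial case analysis showing that any potential disconnection forces an incident triangle to violate \textbf{(C1)} or forces some $\Theta=\pi$, a contradiction. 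The cleaner alternative, which I would actually use, is to bypass the exact kernel computation: it suffices that $\ker J$ is at most one-dimensional, and since $J$ restricted to $\Xi_d$ is what we need, I only have to show $v^{T}Jv>0$ for every $v\ne 0$ with $\sum_i v_i=0$.

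Finally, for the restriction to the hyperplane $\Xi_d=\{\sum_i u_i=d\}$: its tangent space is $\mathbf 1^{\perp}$, and since $\mathbf 1\in\ker J$ while $J$ is symmetric PSD, $J$ maps $\mathbf 1^{\perp}$ into itself, and the claim is that $J|_{\mathbf 1^{\perp}}$ is positive definite, i.e. $\ker J=\mathbb R\mathbf 1$ exactly. Here I would use the row-sum identity together with $\sum_{j}\partial K_i/\partial u_j = \sum_{\triangle\in F(\{v_i\})}\partial\,\mathrm{Area}(\triangle)/\partial u_i$, exactly as in \eqref{E-11}, but note that in Euclidean geometry the area of a triangle \emph{is} homogeneous of degree $2$, so this expression need not vanish; what matters is that for the quadratic form, $v^{T}Jv = \sum_{\triangle}(\text{a positive-semidefinite form in the }u\text{-variables of }\triangle\text{'s vertices})$, and for a single triangle this local form, by Lemma \ref{L-1-3}(i), has kernel exactly the diagonal $u_i=u_j=u_k$. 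Summing over the connected triangulation $\mathcal T$, the global kernel is the common refinement, namely $\mathbb R\mathbf 1$. Hence on $\mathbf 1^{\perp}$ the form is strictly positive, i.e. $J|_{\Xi_d}$ is positive definite, completing the proof.
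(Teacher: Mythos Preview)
Your proposal is correct and follows the same route as the paper: use Lemma~\ref{L-1-3}(i) to get $\partial K_i/\partial u_i>0$, $\partial K_i/\partial u_j=\partial K_j/\partial u_i\le 0$ for $j\ne i$, and $\sum_j\partial K_i/\partial u_j=0$, then read off that $J$ is a symmetric Laplacian-type matrix, hence positive semi-definite with $\mathbf 1\in\ker J$. The paper's own proof stops there and simply asserts that positive definiteness on $\Xi_d$ ``follows from the linear algebra''; your triangle-by-triangle decomposition of the quadratic form, together with the observation that each local $3\times 3$ block has kernel exactly the diagonal (since at most one off-diagonal entry can vanish without forcing some $\Theta=\pi$), is a clean and rigorous way to fill in precisely the step the paper leaves implicit. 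One minor remark: the parenthetical about Euclidean triangle area being homogeneous of degree~$2$ is a red herring---the identity \eqref{E-11} relied on the hyperbolic Gauss--Bonnet relation $\mathrm{Area}=\pi-(\vartheta_i+\vartheta_j+\vartheta_k)$, and in the Euclidean case the correct replacement is simply $\sum_j\partial K_i/\partial u_j=0$, which you already have and use.
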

\begin{proof}
By Lemma \ref{L-1-3}, we have
\begin{equation*}
\frac{\partial K_i}{\partial u_i} \,>\, 0,\;\;\;\;\frac{\partial K_i}{\partial u_j}\,=\,\frac{\partial K_j}{\partial u_i} \,\leq\, 0\, (j\neq i),\;\;\;\; \sum\nolimits_{j=1}^{|V|}\frac{\partial K_i}{\partial u_j}\,=\,0.
\end{equation*}
Similar arguments to the proof of Lemma \ref{L-2-1} yield the first part. And it follows from the linear algebra that the Jacobian matrix of the curvature map is positive definite when restricted in $\Xi_d$.
\end{proof}

Let us consider the $1$-form $\omega=\sum\nolimits_{i=1}^{|V|}(K_i-K_{av}) du_i$. Similarly, $\omega$ is closed and the following energy function
\begin{equation}\label{E-14}
\Phi(u)\,=\,\int_{u(0)}^u \omega,
\end{equation}
is well-defined and is independent on the choice of
piecewise smooth paths in $\mathbb R^{|V|}$ from an initial point $u(0)$ to $u$.

\begin{lemma}\label{L-3-2}
The energy function $\Phi$ in terms of $u$ is convex in $\mathbb{R}^{|V|}$ and is strictly convex when restricted in $\Xi_d$.
\end{lemma}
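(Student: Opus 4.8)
The plan is to mimic the argument used for Lemma~\ref{L-2-2} in the hyperbolic setting. First I would observe that, since $K_{av}$ is a constant, the coefficient matrix of the differential of the $1$-form $\omega=\sum_{i=1}^{|V|}(K_i-K_{av})\,du_i$ coincides with that of $\sum_{i=1}^{|V|}K_i\,du_i$; concretely, the Hessian of the energy function $\Phi$ defined in (\ref{E-14}) is exactly the Jacobian matrix of the curvature map $u\mapsto(K_1,\dots,K_{|V|})$ in terms of $u$. (Closedness of $\omega$, already noted before (\ref{E-14}), is what makes $\Phi$ well defined, so this identification of the Hessian is legitimate.)

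Next I would invoke Lemma~\ref{L-3-1}: this Jacobian is symmetric and positive semi-definite on all of $\mathbb{R}^{|V|}$, which immediately yields that $\Phi$ is convex on $\mathbb{R}^{|V|}$. For the strict convexity on the affine slice $\Xi_d$, I would note that the Hessian of the restriction $\Phi|_{\Xi_d}$ is the restriction of the Hessian of $\Phi$ to the linear subspace $\{v\in\mathbb{R}^{|V|}:\sum_{i=1}^{|V|} v_i=0\}$ of directions tangent to $\Xi_d$; by the second part of Lemma~\ref{L-3-1}, this restricted quadratic form is positive definite, hence $\Phi|_{\Xi_d}$ is strictly convex.

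I do not expect any serious obstacle here: all the analytic content is already packaged in Lemma~\ref{L-3-1}, which in turn rests on the monotonicity statements of Lemma~\ref{L-1-3}, and what remains is the elementary fact that a smooth function whose Hessian is everywhere positive semi-definite is convex, and is strictly convex along directions in which the Hessian is positive definite. The only point that warrants a line of care is to make explicit that the degeneracy of the global Hessian is caused precisely by the direction $(1,\dots,1)$ — reflecting the scaling invariance of Euclidean circle patterns — so that passing to the hyperplane $\Xi_d$ genuinely removes the degeneracy and produces honest strict convexity.
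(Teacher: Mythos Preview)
Your proposal is correct and follows essentially the same approach as the paper: identify the Hessian of $\Phi$ with the Jacobian of the curvature map and invoke Lemma~\ref{L-3-1}. The paper's proof is a one-line version of exactly this argument.
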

\begin{proof}
Because the Hessian of $\Phi$ is equal to the Jacobian  of the curvature map, the statement follows from Lemma \ref{L-3-1}.
\end{proof}

\begin{corollary}\label{C-3-3}
The curvature map in terms of $u$ is injective when restricted in $\Xi_d$.
\end{corollary}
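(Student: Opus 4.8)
The plan is to reduce the statement to Lemma~\ref{L-2-4}, exactly as in the proof of Corollary~\ref{C-2-3}, while taking care of two minor discrepancies: the energy $\Phi$ is defined on all of $\mathbb{R}^{|V|}$ but is only strictly convex when restricted to the hyperplane $\Xi_d$, and $\nabla\Phi$ is the curvature map shifted by the constant vector $(K_{av},\dots,K_{av})$ rather than the curvature map itself.

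First I would record that $\nabla\Phi(u)=\big(K_1(u)-K_{av},\dots,K_{|V|}(u)-K_{av}\big)$, so that for $u,u'\in\Xi_d$ the equality $Th(u)=Th(u')$ is equivalent to $\nabla\Phi(u)=\nabla\Phi(u')$. Next, I would fix an affine isomorphism $\iota\colon\mathbb{R}^{|V|-1}\to\Xi_d$ with (constant) linear part $L\colon\mathbb{R}^{|V|-1}\to\mathbb{R}^{|V|}$, an injective linear map whose image is the direction space $\{v\in\mathbb{R}^{|V|}:\sum_i v_i=0\}$ of $\Xi_d$. Setting $\widetilde\Phi=\Phi\circ\iota$, Lemma~\ref{L-3-2} shows $\widetilde\Phi$ is a strictly convex smooth function on the convex set $\mathbb{R}^{|V|-1}$, and the chain rule gives $\nabla\widetilde\Phi(x)=L^{T}\nabla\Phi(\iota(x))$.

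Then I would apply Lemma~\ref{L-2-4} to $\widetilde\Phi$ to conclude that $\nabla\widetilde\Phi$ is injective on $\mathbb{R}^{|V|-1}$. To finish, suppose $u,u'\in\Xi_d$ satisfy $Th(u)=Th(u')$ and write $u=\iota(x)$, $u'=\iota(x')$. By the first step $\nabla\Phi(u)=\nabla\Phi(u')$, hence $\nabla\widetilde\Phi(x)=L^{T}\nabla\Phi(u)=L^{T}\nabla\Phi(u')=\nabla\widetilde\Phi(x')$, and injectivity of $\nabla\widetilde\Phi$ forces $x=x'$, i.e.\ $u=u'$.

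There is essentially no obstacle here, since all the analytic content was already packaged into Lemma~\ref{L-3-2} (ultimately Lemma~\ref{L-1-3}) and Lemma~\ref{L-2-4}. The only point deserving a word of caution is that $L^{T}$ has a one-dimensional kernel (the normal direction to $\Xi_d$), so one may not run the implication backwards from $\nabla\widetilde\Phi(x)=\nabla\widetilde\Phi(x')$ to $\nabla\Phi(u)=\nabla\Phi(u')$; fortunately only the forward direction is used. Alternatively one could bypass the coordinate change and argue directly: for $u\ne u'$ in $\Xi_d$ the segment joining them lies in $\Xi_d$, so $t\mapsto\langle\nabla\Phi(u+t(u'-u)),u'-u\rangle$ is strictly increasing by the strict convexity of $\Phi|_{\Xi_d}$, which already precludes $\nabla\Phi(u)=\nabla\Phi(u')$.
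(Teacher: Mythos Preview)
Your proof is correct and follows essentially the same approach as the paper, which simply states that the corollary ``is an immediate consequence of Lemma~\ref{L-2-4} and Lemma~\ref{L-3-2}.'' You have merely filled in the details the paper leaves implicit: the constant shift by $K_{av}$ between $\nabla\Phi$ and $Th$, and the passage to an affine chart on $\Xi_d$ so that Lemma~\ref{L-2-4} applies literally.
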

\begin{proof}
It is an immediate consequence of Lemma \ref{L-2-4} and Lemma \ref{L-3-2}.
\end{proof}

\subsection{Image of the curvature map}
\begin{proof}[\textbf{Proof of Theorem \ref{T-0-5}}]
Let $Y\subset \mathbb R^{|V|}$ be the convex set characterized by the systems of inequalities (\ref{E-1}) and (\ref{E-3}). Set
\[
\Lambda_d\,=\,\Big\{\,r\in\mathbb R^{|V|}_{+}:\prod\nolimits_{i=1}^{|V|} r_i= e^d\,\Big\}.
\]
Apparently, $Th(\mathbb R^{|V|}_{+})=Th(\Lambda_d)$. Let us consider the restriction map
\[
Th:\,\Lambda_d\,\to\, Y.
\]
It is easy to see
\[
\dim (\Lambda_d)\,=\,\dim (Y)\,=\,|V|-1.
\]
Moreover, we have the following claims:
\begin{itemize}
\item[$(i)$] $Th$ is continuous.
\item[$(ii)$] $Th$ is injective, which is a result of Corollary \ref{C-3-3}.
\item[$(iii)$] $Th(\Lambda_d)\subset Y$. For any $r\in \Lambda_d$, Lemma \ref{L-1-4} implies that
\[
\begin{aligned}
\sum\nolimits_{v_i\in A}\sigma(v_i)(r)\,\leq&\,\sum\nolimits_{(e,v)\in Lk(A)}\big(\pi-\Theta(e)\big)+\pi\big(\,|F_2(A)|+|F_3(A)|\,\big)\\
\,=&\,\sum\nolimits_{(e,v)\in Lk(A)}\big(\pi-\Theta(e)\big)+\pi|F(A)|-\pi|Lk(A)|.
\end{aligned}
\]
Hence
\[
\sum\nolimits_{v_i\in A}K_i(r)
\,\geq\,-\sum\nolimits_{(e,v)\in Lk(A)}\big(\pi-\Theta(e)\big)+2\pi|A|-\pi|F(A)|+\pi|Lk(A)|-\pi| A\cap V_\partial|,
\]
where the equality holds if and only if $F(A)=F_3(A)$. Noting that $F(A)=F_3(A)$ is equivalent to $V=A$, the statement follows from Lemma \ref{L-2-5}.

\item[$(iv)$] The map $Th: \Lambda_d\to Y$ is proper. Because of Remark \ref{R-3-4}, it remains to check that (\ref{E-3}) becomes an equality when the radii for those vertices in $A$ tend to zero, where $A$ is an arbitrary proper non-empty subset of $V$. Due to Lemma \ref{L-1-5}, the claim holds.
\end{itemize}

By continuity method, similar arguments to the proof of Theorem \ref{T-0-4} imply that $Th(\Lambda_d)=Y$. Thus the theorem is proved.
\end{proof}

\begin{remark}\label{R-3-4}
For each $v_i\in V_\partial$, set $A_i=V\setminus \{v_i\}$. It follows from (\ref{E-3})  that
\[
\sum\nolimits_{j\neq i}K_j\,>\,2\pi\chi(S\setminus \partial S)+\pi\chi(\partial S\setminus \{v_i\})\,=\,2\pi\chi(S)-\pi.
\]
Combining with the Gauss-Bonnet equality in (\ref{E-4}), we obtain $K_i<\pi$. Similarly, $K_i<2\pi$ if $v_i\in V\setminus V_\partial$. This means that (\ref{E-1}) is implied by (\ref{E-3}).
\end{remark}

\subsection{Completeness}
Let us consider the flow (\ref{E-6}). Similarly, one rewrites it as an autonomous ODE system
\begin{equation}\label{E-15}
\frac{du_i}{dt}\,=\,K_{av}-K_i.
\end{equation}
And we shall not distinguish the $r$-flow (\ref{E-6}) and the $u$-flow (\ref{E-15}).

Picard's theorem implies that the flow (\ref{E-15}) has a unique solution $u(t)$, where $t\in[0, \epsilon)$ for some $\epsilon>0$. Therefore, for any initial value $u(0)\in \mathbb{R}^{|V|}$, the solution to the $u$-flow (\ref{E-15}) uniquely exists in a maximal time interval $[0,T_0)$ with $0<T_0\leq +\infty$. And the solution to the $r$-flow (\ref{E-6}) also uniquely exists in a maximal interval $[0,T_0)$. Similarly, we have $T_0=+\infty$.

\begin{lemma}\label{L-3-5}
For any $r(0)\in\mathbb{R}^{|V|}_{+}$, the combinatorial Ricci flow (\ref{E-6}) has a unique solution $r(t)$ which exists for all time $t\geq0$. Moreover, $\prod\nolimits_{i=1}^{|V|} r_i(t)$ is invariant as $t$ varies.
\end{lemma}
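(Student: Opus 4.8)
The plan is to follow the pattern of the proof of Lemma~\ref{L-2-6}, working with the autonomous $u$-flow (\ref{E-15}) and transporting the conclusions back to the $r$-flow. First I would extract the conservation law. Summing (\ref{E-15}) over $i$ gives
\[
\frac{d}{dt}\sum\nolimits_{i=1}^{|V|}u_i\,=\,|V|K_{av}-\sum\nolimits_{i=1}^{|V|}K_i\,=\,2\pi\chi(S)-\sum\nolimits_{i=1}^{|V|}K_i .
\]
The key point is the \emph{pointwise} combinatorial Gauss--Bonnet identity $\sum_{i=1}^{|V|}K_i(r)=2\pi\chi(S)$, valid for \emph{every} radius vector $r\in\mathbb{R}^{|V|}_{+}$: each Euclidean triangle has angle sum exactly $\pi$, so $\sum_i\sigma(v_i)=\pi|F|$, whence $\sum_iK_i=2\pi|V\setminus V_\partial|+\pi|V_\partial|-\pi|F|$, and this equals $2\pi\chi(S)$ by Euler's formula (this is the $A=V$ instance of the Gauss--Bonnet equality recorded in Remark~\ref{R-0-6}). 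Therefore $\frac{d}{dt}\sum_i u_i\equiv 0$, so $\sum_i u_i(t)$ is constant along the flow, i.e. $\prod_{i=1}^{|V|}r_i(t)$ is invariant in $t$.

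Next I would establish long-time existence by the standard escape argument. As in the proof of Lemma~\ref{L-2-6}, each $|K_i|$ is bounded by a constant depending only on the combinatorics of $\mathcal T$ (each $\sigma(v_i)$ is a sum of finitely many inner angles, each lying in $(0,\pi)$ by Lemma~\ref{L-1-4}), hence $|K_{av}-K_i|\le c$ for some combinatorial constant $c$. Since $(K_1,\dots,K_{|V|})$ is locally Lipschitz (indeed smooth) in $u$ on all of $\mathbb{R}^{|V|}$, Picard's theorem gives a unique maximal solution $u(t)$ on $[0,T_0)$, and integrating (\ref{E-15}) yields the a priori bound $|u_i(t)-u_i(0)|\le c\,t$ for $t\in[0,T_0)$. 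Thus on any finite subinterval $u(t)$ remains in a compact box, equivalently $r(t)=(e^{u_1(t)},\dots,e^{u_{|V|}(t)})$ stays in a compact subset of $\mathbb{R}^{|V|}_{+}$ bounded away from both the $0$-boundary and the $\infty$-boundary; a maximal solution cannot do this if $T_0<+\infty$, so $T_0=+\infty$. Transporting back, the $r$-flow (\ref{E-6}) has a unique solution for all $t\ge 0$.

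I do not anticipate any real difficulty in this lemma; the only step deserving care is the justification of the pointwise identity $\sum_i K_i(r)=2\pi\chi(S)$, which holds for every cone metric arising from a radius vector precisely because Euclidean triangles have angle sum exactly $\pi$ (in the hyperbolic setting only an inequality holds, and correspondingly $\prod r_i$ is not conserved). This identity is what makes $\prod_{i=1}^{|V|}r_i$ a genuine first integral, and it will also be used later to analyze the convergence of the flow on the slice $\Lambda_d$.
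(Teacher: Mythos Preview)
Your proposal is correct and follows essentially the same route as the paper: Gauss--Bonnet gives the conservation of $\sum_i u_i$, the uniform combinatorial bound $|K_{av}-K_i|\le c$ yields $|u_i(t)-u_i(0)|\le ct$, and standard ODE extension then gives $T_0=+\infty$. The only cosmetic difference is that the paper invokes the conservation of $\prod_i r_i$ to reduce long-time existence to a lower bound on each $r_i$, whereas you extract both upper and lower bounds directly from the same linear growth estimate on $u_i$; the content is the same.
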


\begin{proof}
First, a simple calculation gives
\[
\sum\nolimits_{i=1}^{|V|}\frac{du_i}{dt}\,=\,\sum\nolimits_{i=1}^{|V|}(K_{av}-K_i)\,=\,0.
\]
Hence $\sum\nolimits_{i=1}^{|V|} u_i(t)$ stays constant, which implies that $\prod\nolimits_{i=1}^{|V|} r_i(t)$ is invariant as $t$ varies. To prove that the solution exists for all time $t\geq 0$, one needs to show that $r(t)$ never touches the boundary. Because
\[
\prod\nolimits_{i=1}^{|V|} r_i(t)\,=\,\prod\nolimits_{i=1}^{|V|} r_i(0),
\]
it suffices to check that every $r_i(t)$ can not become zero within any finite time. Observe that each $|K_{av}-K_i|$ is uniformly bounded  by a positive constant $c$, which depends only on the combinatorial information of the triangulation $\mathcal{T}.$ Using (\ref{E-15}),  for each $i$, we have
\[
-ct\,\leq\, u_i(t)-u_i(0)\,\leq\, ct.
\]
 It follows that
\[
r_i(t)\,\geq\,r_i(0)e^{-ct}\,>\,0.
\]
Thus the lemma is proved.
\end{proof}

\subsection{Convergence}

\begin{proposition}\label{P-3-6}
 If the solution to the flow (\ref{E-6}) converges to a vector $r^\ast\in \mathbb R^{|V|}_{+}$, then $r^\ast$ produces an Euclidean cone metric on $S$ so that all vertex curvatures are equal to $K_{av}$.
\end{proposition}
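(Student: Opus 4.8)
The plan is to follow the proof of Proposition~\ref{P-2-7} almost verbatim, substituting the Euclidean flow \eqref{E-6} (equivalently the $u$-flow \eqref{E-15}) for the hyperbolic one. The first step is to transfer the convergence hypothesis to the $u$-variables: since the map $r\mapsto u$ given by $u_i=\ln r_i$ is a homeomorphism of $\mathbb R^{|V|}_+$ onto $\mathbb R^{|V|}$ and $r(t)\to r^\ast\in\mathbb R^{|V|}_+$, the solution $u(t)$ of \eqref{E-15} converges to the (finite) vector $u^\ast\in\mathbb R^{|V|}$ corresponding to $r^\ast$. In particular $u_i(n+1)-u_i(n)\to 0$ as $n\to\infty$ for every $i$.

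The second step is a mean value argument. For each positive integer $n$ and each $i\in\{1,\dots,|V|\}$, there is $\zeta_n\in(n,n+1)$ with
\[
u_i(n+1)-u_i(n)\,=\,u_i'(\zeta_n)\,=\,K_{av}-K_i(u(\zeta_n)).
\]
Because $\zeta_n\to+\infty$ we have $u(\zeta_n)\to u^\ast$, and since each $K_i$ is a continuous function of $u$ on $\mathbb R^{|V|}$, letting $n\to\infty$ yields $0=K_{av}-K_i(u^\ast)$, i.e. $K_i(r^\ast)=K_{av}$ for all $i$.

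The last step is bookkeeping: under $\mathbf{(C1)}$, Lemma~\ref{L-1-1} guarantees that the triangle inequalities hold for every radius vector in $\mathbb R^{|V|}_+$, so $r^\ast$ determines a genuine Euclidean cone metric on $S$ by gluing the associated Euclidean triangles along common edges, and by construction its vertex curvature at $v_i$ is exactly $K_i(r^\ast)$; the previous step shows these all equal $K_{av}$. I do not expect a genuine obstacle here — the only point requiring (minor) care is that the hypothesis $r^\ast\in\mathbb R^{|V|}_+$ keeps $u^\ast$ finite, so that the continuity of $K_i$ in $u$ can legitimately be invoked; otherwise the argument is formally identical to that of Proposition~\ref{P-2-7}.
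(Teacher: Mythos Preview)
Your proof is correct and follows exactly the approach indicated in the paper, which simply states that the argument is verbatim that of Proposition~\ref{P-2-7} and omits the details. The substitutions you make (the $u$-flow \eqref{E-15} in place of \eqref{E-13} and $K_{av}-K_i$ in place of $-K_i$) are precisely what is needed.
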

The proof follows verbatim from Proposition \ref{P-2-7}. We omit the details.

\begin{proof}[\textbf{Proof of Theorem \ref{T-0-8}}]The implication $``E_1\Rightarrow E_2"$ follows from Proposition \ref{P-3-6}.

To show $``E_2\Rightarrow E_1"$, suppose that $r^\ast\in\Lambda(0)$ satisfies $K_i(r^\ast)=K_{av}$ for $i=1,\cdots,|V|$. Let $u^\ast$ correspond to $r^\ast$. Then $u^\ast\in \Xi(0)$, where
\[
\Xi(0)\,=\,\big\{u\in\mathbb R^{|V|}:\,\sum\nolimits_{i=1}^{|V|}u_i=\sum\nolimits_{i=1}^{|V|}\ln r_i(0)\big\}.
\]
Consider the energy function $\Phi$ defined as (\ref{E-14}). Then $\nabla \Phi|_{u^\ast}=0$, which implies that $u^\ast$ is a critical point of $\Phi$. By Lemma \ref{L-3-2} and Lemma \ref{L-2-8}, $u^\ast$ is a global minimal point of $\Phi$ in $\Xi(0)$. It follows  that $\Phi$ is bounded from below. Moreover,
\[
\frac{d\Phi(u(t))}{dt}\,=\,-\sum\nolimits_{i=1}^{|V|}\frac{\partial\Phi}{\partial u_i} u_i'(t)\,=\,-\sum\nolimits_{i=1}^{|V|}(K_i-K_{av})^2\,\leq\,0.
\]
As a result, $\Phi(u(+\infty))$ exists. We claim that $\{u(t):t\in[0,+\infty)\}$ is compact in $\Xi(0)$. It suffices to prove that $u(t)$ is bounded. Suppose that it is not true. Then there exists $t_n\to+\infty$ such that $\|u(t_n)\|\to +\infty$. By Lemma \ref{L-2-8}, it is not hard to see
\[
\lim_{n\to\infty}\Phi(u(t_n))\,=\,+\infty,
\]
which contradicts to the convergence of $\Phi(u(t))$.
Hence $\{u(t):t\in[0,+\infty)\}$ is compact. And similar arguments to the proof of Theorem \ref{T-0-7} imply that $u(t)\to u^\ast$.

To summarise, we show that $``E_1\Leftrightarrow E_2"$. Meanwhile, the relation $``E_2\Leftrightarrow E_3"$ follows from Theorem \ref{T-0-5}. It remains to investigate the exponential convergence of the flow. By Gauss-Bonnet equality, one obtains
\[
\sum\nolimits_{j=1}^{|V|}\frac{\partial K_i}{\partial u_j}\,=\,\sum\nolimits_{j=1}^{|V|}\frac{\partial K_j}{\partial u_i}\,=\,\frac{\partial}{\partial u_i}\big(2\pi\chi(S)\big)\,=\,0.
\]
It follows that
\begin{equation*}
\begin{aligned}
\frac{dK_i}{d t}\,=&\,-\sum\nolimits_{j=1}^{|V|}\frac{\partial K_i}{\partial u_j}(K_j-K_{av})\\
\,=&\,-\sum\nolimits_{j=1}^{|V|}\frac{\partial K_i}{\partial u_j}(K_j-K_{av})+\sum\nolimits_{j=1}^{|V|}\frac{\partial K_i}{\partial u_j}(K_i-K_{av})\\
\,=&\,\sum\nolimits_{j=1}^{|V|}\frac{\partial K_i}{\partial u_j}(K_i-K_j).
\end{aligned}
\end{equation*}
Set $g(t)=\sum\nolimits_{i=1}^{|V|}(K_i-K_{av})^2$. A routine calculation gives
\[
\begin{aligned}
\frac{dg(t)}{dt}\,=&\,\sum\nolimits_{i=1}^{|V|}\sum\nolimits_{j=1}^{|V|}\frac{\partial K_i}{\partial u_j}(K_{i}-K_{av})(K_i-K_j)\\
\,=&\,\sum\nolimits_{i=1}^{|V|}\sum\nolimits_{j=1}^{|V|}\frac{\partial K_j}{\partial u_i}(K_{j}-K_{av})(K_j-K_i)\\
\,=&\,\frac{1}{2}\sum\nolimits_{i=1}^{|V|}\sum\nolimits_{j=1}^{|V|}\frac{\partial K_i}{\partial u_j}(K_i-K_j)^2
\end{aligned}
\]
Recall that the coefficient $\partial K_i/\partial u_j=0$ if the vertices $v_i, v_j$ are not adjacent. Therefore,
\[
\frac{dg(t)}{dt}\,=\,\sum\nolimits_{[v_i,v_j]\in E}\frac{\partial K_i}{\partial u_j}(K_i-K_j)^2.
\]
By the following Lemma \ref{L-3-7}, we show that
\[
\frac{dg(t)}{dt}\,\leq\,-c_1g(t).
\]
As a result,
\[
g(t)\,\leq\, g(0)e^{-c_1t}.
\]
It follows that $|K_i(t)-K_{av}|^2\leq g(t)\leq g(0)e^{-c_1t}$. By taking the integration in (\ref{E-5}), we prove that $r(t)$ converges exponentially fast to $r^\ast$.
\end{proof}

\begin{lemma}\label{L-3-7}
There exists $c_1>0$ such that
\[
\sum\nolimits_{[v_i,v_j]\in E}\frac{\partial K_i}{\partial u_j}(K_i-K_j)^2\,\leq\,-c_1\sum\nolimits_{i=1}^{|V|}(K_i-K_{av})^2.
\]
\end{lemma}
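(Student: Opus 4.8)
The plan is to establish this as a Poincaré-type inequality on the graph underlying $\mathcal T$, exploiting two facts: that $\sum_{i} (K_i - K_{av}) = 0$ along the flow (since $\sum_i K_i = 2\pi\chi(S)$ identically by Gauss--Bonnet, cf. Remark \ref{R-0-6}), and that $-\partial K_i/\partial u_j > 0$ for every edge $[v_i,v_j]\in E$ by Lemma \ref{L-1-3} (the strict sign being available since $\Theta(e)<\pi$, as used in Lemma \ref{L-2-1}). Write $w_{ij} = -\partial K_i/\partial u_j \ge 0$; then the claimed inequality reads
\[
\sum\nolimits_{[v_i,v_j]\in E} w_{ij}(K_i-K_j)^2 \,\ge\, c_1 \sum\nolimits_{i=1}^{|V|}(K_i-K_{av})^2,
\]
i.e. a spectral-gap estimate for the weighted graph Laplacian $L = (L_{ij})$ with $L_{ij} = -w_{ij}$ for $i\ne j$ and $L_{ii} = \sum_{j\ne i} w_{ij}$, acting on the mean-zero subspace. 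Indeed the left side is $(K-K_{av}\mathbf 1)^\top L (K-K_{av}\mathbf 1)$ and the right side is $c_1\|K - K_{av}\mathbf 1\|^2$ with $K - K_{av}\mathbf 1 \perp \mathbf 1$.

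First I would observe that $L$ here is exactly the (negative of the) Jacobian of the curvature map in the $u$-variables restricted to $\Xi_d$, which is positive definite on $\{x : \sum_i x_i = 0\}$ by Lemma \ref{L-3-1}; since $\mathcal T$ is connected the weighted graph is connected, so the kernel of $L$ is precisely $\mathrm{span}(\mathbf 1)$ and the smallest eigenvalue of $L$ on $\mathbf 1^\perp$ is strictly positive. The only subtlety is that $w_{ij}$, hence $L$, depends on $u(t)$, so this "eigenvalue" is a function of $t$, not a fixed constant. This is where compactness enters: by Lemma \ref{L-3-5} and the convergence argument in the proof of Theorem \ref{T-0-8} (more precisely, once we are in the situation where $\Phi(u(+\infty))$ exists and $\{u(t) : t\ge 0\}$ is shown compact in $\Xi(0)$), the trajectory lies in a compact subset $\mathcal K \subset \mathbb R^{|V|}$. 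The functions $w_{ij}(u)$ are continuous and strictly positive on $\mathcal K$, hence bounded below by some $\delta > 0$; therefore the least eigenvalue $\lambda_2(u)$ of $L(u)$ on $\mathbf 1^\perp$ is a continuous positive function on the compact set $\mathcal K$ and attains a positive minimum $c_1 := \min_{u\in\mathcal K}\lambda_2(u) > 0$. With this $c_1$ the inequality holds for every $t\ge 0$, as desired.

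The main obstacle, and the only place requiring care, is making sure the $c_1$ is genuinely uniform in $t$ rather than deteriorating as $t\to\infty$; this is handled entirely by the a priori compactness of the trajectory, which is not circular because in the proof of Theorem \ref{T-0-8} that compactness is established before this lemma is invoked (it uses only Lemma \ref{L-2-8} and the monotonicity of $\Phi$, not the exponential rate). A cleaner alternative, avoiding spectral-gap language, is to argue directly: on the compact set $\mathcal K\cap\{\|K - K_{av}\mathbf 1\| = 1, \ \sum_i(K_i-K_{av})=0\}$ — which is compact and on which the continuous function $u\mapsto \sum_{[v_i,v_j]\in E} w_{ij}(u)(K_i-K_j)^2$ is strictly positive (it vanishes only when $K_i = K_j$ for all adjacent $i,j$, i.e. $K$ constant, i.e. $K - K_{av}\mathbf 1 = 0$, contradicting the normalization) — it attains a positive minimum $c_1$; homogeneity of degree two in $K - K_{av}\mathbf 1$ then gives the stated inequality. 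Either route is short; I would present the second, since it sidesteps having to name the Laplacian and its spectrum explicitly.
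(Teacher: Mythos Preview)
Your proposal is correct and rests on the same two pillars as the paper's own proof (given in the more general Lemma~\ref{L-4-2}): compactness of the trajectory $\{u(t):t\ge 0\}$ and a discrete Poincar\'e inequality on mean-zero vectors. The routes differ in how the Poincar\'e step is carried out. The paper first uses compactness to bound each off-diagonal coefficient $\partial K_i/\partial u_j$ uniformly from above by a negative constant, thereby reducing to an \emph{unweighted} inequality $\sum_i(K_i-K_{av})^2\le c_2\sum_{[v_i,v_j]\in E}(K_i-K_j)^2$; it then proves this by hand: the mean-zero condition writes $K_i-K_{av}$ as an average of pairwise differences, and each pairwise difference is controlled by telescoping along a path in the $1$-skeleton together with Cauchy--Schwarz, yielding an explicit constant depending on $|V|$ and the combinatorial diameter. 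Your first route instead keeps the weights, invokes Lemma~\ref{L-3-1} for positive definiteness of $L(u)$ on $\mathbf 1^\perp$ at each fixed $u$, and uses compactness of $\mathcal K$ to make the least eigenvalue $\lambda_2(u)$ uniformly positive. This is shorter and more conceptual, while the paper's path argument is more elementary and gives a constant one can in principle compute. Your remark that compactness of the trajectory is established in the proof of Theorem~\ref{T-0-8} \emph{before} this lemma is invoked, so that no circularity arises, matches exactly how the paper deploys it.

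One caveat on your second (``cleaner'') route: as written, the set $\mathcal K\cap\{\|K-K_{av}\mathbf 1\|=1\}$ lives in $u$-space, and since $K=K(u)$ is determined by $u$ you cannot independently rescale $K-K_{av}\mathbf 1$ to invoke homogeneity. The argument you intend works if you instead minimize the quadratic form $(u,x)\mapsto\sum_{[v_i,v_j]\in E}w_{ij}(u)(x_i-x_j)^2$ over the compact product $\mathcal K\times\{x\in\mathbf 1^\perp:\|x\|=1\}$, treating $x$ as a free variable, and \emph{then} substitute $x=K(u)-K_{av}\mathbf 1$. This is a purely cosmetic fix.
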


In Section \ref{Further}, we will prove Lemma \ref{L-4-2}, which generalizes the above lemma.

\begin{proof}[\textbf{Proof of the Theorem \ref{T-0-3}}]
In case that $\chi(S)<0$, the statement is an immediate result of Theorem \ref{T-0-7}. Suppose that $\chi(S)=0$. Then $K_{av}=2\pi\chi(S)/|V|=0$. Applying Theorem \ref{T-0-8}, the vector $(0,0,\cdots,0)$ belongs to the image of the curvature map if and only if $E_3$ holds. Because $\chi(S)=0$, $E_3$ and $H_3$ are the same for any proper non-empty subset $A$ of $V$. Due to  Proposition \ref{P-2-9}, we finish the proof.
\end{proof}

\section{Further discussions}\label{Further}

\subsection{Circle patterns with prescribed curvatures}

To search the circle pattern with other prescribed vertex curvatures, we modify (\ref{E-5}) to  another ODE system
\begin{equation}\label{E-16}
\frac{dr_i}{dt}\,=\,(k_i-K_i)\sinh r_i
\end{equation}
and modify (\ref{E-6}) to the ODE system
\begin{equation}\label{E-17}
\frac{dr_i}{dt}\,=\,(k_i-K_i)r_i
\end{equation}
with an initial radius vector $r(0)\in \mathbb R^{|V|}_{+}$, where
$K=(k_1,\cdots,k_{|V|})\in \mathbb R^{|V|}$ is a prescribed curvature vector. Such a vector $K$ is called hyperbolic (resp. Euclidean)
if it satisfies  (\ref{E-1}) and the Gauss-Bonnet inequality (resp. equality) in (\ref{E-4}). And $K$ is called \textbf{attainable}, if it satisfies (\ref{E-2}) except for the case that $A=V$. Remind that (\ref{E-2}) and (\ref{E-3}) are the same if $A$ is a proper non-empty subset of $V$.

\begin{theorem}\label{T-4-1}
Suppose that $\Theta:E\to [0,\pi)$ satisfies $\mathbf{(C1)}$. For any hyperbolic (resp. Euclidean) curvature vector $K=(k_1,\cdots,k_{|V|})$, the flow (\ref{E-16}) (resp. (\ref{E-17})) has a unique solution $r(t)$ which exists for all time $t\geq 0$. And $r(t)$ converges if and only if $K$ is \textbf{attainable}. Moreover, if $r(t)$ converges, then it converges exponentially fast to a radius vector which produces a hyperbolic (resp. Euclidean) cone metric $\mu$ on $S$, so that $(S,\mu)$ has vertex curvatures assigned by $K$ and supports a $\mathcal T$-type circle pattern $\mathcal P$ whose exterior intersection angle function is $\Theta$.
\end{theorem}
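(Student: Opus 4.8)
The plan is to mimic the arguments already used for Theorems \ref{T-0-7} and \ref{T-0-8}, replacing the curvature $K_i$ by $K_i - k_i$ throughout. First I would rewrite the flows in the right coordinates: in the hyperbolic case set $u_i = \ln\tanh(r_i/2)$, turning \eqref{E-16} into the autonomous system $du_i/dt = -(K_i - k_i)$; in the Euclidean case set $u_i = \ln r_i$, turning \eqref{E-17} into $du_i/dt = -(K_i - k_i)$. Since $K_i-k_i$ is locally Lipschitz on the relevant domain, Picard's theorem gives a unique local solution, and long-time existence follows exactly as in Lemma \ref{L-2-6} (resp. Lemma \ref{L-3-5}): each $|K_i - k_i|$ is bounded by a combinatorial constant, which prevents $r_i(t)$ from reaching $0$ in finite time; and in the hyperbolic case the monotonicity argument (when $r_i$ is large, $K_i$ is near $0$ by Lemma \ref{L-1-6}, so $K_i - k_i < 0$ whenever $k_i>0$... one has to be slightly careful since $k_i$ may be close to $2\pi$, but Lemma \ref{L-1-6} gives $K_i$ as small as we like, so $r_i' = (k_i - K_i)\sinh r_i$ need not be negative — instead I would argue as in Lemma \ref{L-2-6} that if $r_i \to +\infty$ along the flow then eventually $K_i - k_i$ is bounded away from $0$ in sign only if $k_i \ge 0$; the hypothesis \eqref{E-1} gives $k_i < 2\pi$ or $k_i<\pi$ which together with $K_i\to 0$ forces $K_i - k_i$ to be eventually negative, so $r_i$ cannot escape to $+\infty$). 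In the Euclidean case the product $\prod_i r_i(t)$ is again invariant because $\sum_i (K_i - k_i) = \sum_i K_i - 2\pi\chi(S) = 0$ by the Gauss--Bonnet equality satisfied by a Euclidean curvature vector.

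Next I would set up the energy function. In the hyperbolic case let $\omega = \sum_i (K_i - k_i)\, du_i$; this is closed since $\partial K_i/\partial u_j = \partial K_j/\partial u_i$ (Lemma \ref{L-2-1}), so $\Phi(u) = \int_{u(0)}^u \omega$ is well-defined, and its Hessian equals the Jacobian of $Th$, hence is positive definite, so $\Phi$ is strictly convex (Lemma \ref{L-2-2}). In the Euclidean case take $\omega = \sum_i (K_i - k_i)\,du_i$ and restrict attention to the affine slice $\Xi_d$; by Lemma \ref{L-3-1} the restricted Hessian is positive definite, so $\Phi$ is strictly convex on $\Xi_d$. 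Along the flow, $\frac{d\Phi(u(t))}{dt} = -\sum_i (K_i - k_i)^2 \le 0$, so $\Phi$ is nonincreasing. A critical point of $\Phi$ is exactly a zero of $(K_1-k_1,\dots,K_{|V|}-k_{|V|})$, i.e.\ a radius vector realizing the prescribed curvatures; by Lemma \ref{L-2-8} such a critical point, if it exists, is the unique global minimum and $\Phi$ is proper. The equivalence ``$r(t)$ converges $\iff$ $K$ is attainable'' then follows exactly as in the proof of Theorem \ref{T-0-7}: if $r(t) \to r^\ast$ then by the mean value theorem argument of Proposition \ref{P-2-7} we get $K_i(r^\ast) = k_i$, and Theorem \ref{T-0-4} (resp.\ Theorem \ref{T-0-5}) says this forces $K$ to satisfy \eqref{E-2} for proper $A$ and the Gauss--Bonnet relation for $A = V$ — that is, $K$ is attainable; conversely, if $K$ is attainable, Theorem \ref{T-0-4}/\ref{T-0-5} produces $r^\ast$ with $Th(r^\ast) = K$, hence a critical point $u^\ast$ of $\Phi$, and then the compactness-of-the-orbit argument (each $u_i(t)$ bounded above by the completeness lemma, bounded below because $\Phi$ is proper and $\Phi(u(t))$ is bounded) plus the subsequence-extraction argument forces $u(t) \to u^\ast$.

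For exponential convergence I would reuse the two mechanisms already in the paper. In the hyperbolic case set $M(t) = \max_i (K_i - k_i)(r(t))$ and $m(t) = \min_i (K_i - k_i)(r(t))$; using $\frac{d}{dt}(K_i - k_i) = -\sum_j \frac{\partial K_i}{\partial u_j}(K_j - k_j)$ together with the strict diagonal dominance \eqref{E-11}, one gets $\frac{dM}{dt} \le -M \sum_{\triangle \in F(\{v_l\})} \partial \mathrm{Area}(\triangle)/\partial u_l$ at the index $l$ achieving the max, and since the orbit is precompact this coefficient is bounded below by a positive constant $c_1$, giving $M(t) \le M(0) e^{-c_1 t}$ and similarly $m(t) \ge m(0) e^{-c_1 t}$; integrating $r_i' = (k_i - K_i)\sinh r_i$ from $t$ to $+\infty$ then yields two-sided exponential bounds on $r_i(t) - r_i^\ast$. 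In the Euclidean case I would instead use the quadratic functional $g(t) = \sum_i (K_i - k_i)^2$ and the identity (valid because $\sum_j \partial K_i/\partial u_j = 0$) that reduces $\frac{dg}{dt}$ to $\sum_{[v_i,v_j]\in E} \frac{\partial K_i}{\partial u_j}(K_i - K_j)^2$, and then invoke the generalization of Lemma \ref{L-3-7} (stated later as Lemma \ref{L-4-2}) to get $\frac{dg}{dt} \le -c_1 g(t)$, hence exponential decay of $g$ and, after integrating \eqref{E-17}, of $r(t) - r^\ast$. Finally, the limiting radius vector $r^\ast$ produces by Thurston's construction a hyperbolic (resp.\ Euclidean) cone metric with vertex curvatures $k_i$, and drawing the circle of radius $r_i^\ast$ at each $v_i$ gives the required $\mathcal T$-type circle pattern with exterior angles $\Theta$. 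The one genuinely delicate point is the long-time existence in the hyperbolic case with possibly large $k_i$: I would handle it by the same infimum-of-excursion-times trick as in Lemma \ref{L-2-6}, using Lemma \ref{L-1-6} to ensure $K_i < k_i + \epsilon$ is not what we want but rather that once $r_i$ is huge $K_i$ is so small that $K_i - k_i$ is negative (since $k_i>0$ is not assumed — but if $k_i \le 0$ the term $(k_i-K_i)\sinh r_i$ could be positive, so one must also note that when all the \emph{other} radii are bounded, $K_i$ stays bounded away from $0$ unless $r_i$ itself is large, forcing the contradiction); making this dichotomy precise is where most of the work lies.
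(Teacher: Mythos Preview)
Your overall plan is the paper's plan: rewrite the flows in the $u$-coordinates, use the shifted energy $\Phi(u)=\int\sum_i(K_i-k_i)\,du_i$, invoke strict convexity and Lemma~\ref{L-2-8} for the convergence equivalence via Theorems~\ref{T-0-4}/\ref{T-0-5}, and then get exponential decay by the max-principle (hyperbolic) or the quadratic $g(t)$ plus Lemma~\ref{L-4-2} (Euclidean). The paper's own proof is just a few lines referring back to exactly these ingredients.

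There is one genuine confusion, and it is precisely the point you flag as ``genuinely delicate'': you have the asymptotics of $K_i$ reversed. Lemma~\ref{L-1-6} says the \emph{inner angle} $\vartheta_i\to 0$ as $r_i\to+\infty$; hence the cone angle $\sigma(v_i)\to 0$ and therefore $K_i=2\pi-\sigma(v_i)\to 2\pi$ (or $K_i=\pi-\sigma(v_i)\to\pi$ on the boundary), not $K_i\to 0$. Now the hypothesis \eqref{E-1} on a hyperbolic curvature vector gives $k_i<2\pi$ (resp.\ $k_i<\pi$), so there exists $L>0$ with $K_i(r)>k_i$ whenever $r_i>L$, and then $r_i'=(k_i-K_i)\sinh r_i<0$. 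The infimum-of-excursion-times trick of Lemma~\ref{L-2-6} then goes through verbatim, with no case analysis on the sign of $k_i$ and no appeal to the other radii. Your worry about $k_i\le 0$ and the attempted dichotomy are unnecessary once the sign is right.

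One smaller slip: in the Euclidean exponential step the identity for $dg/dt$ produces $(K_i-k_i-K_j+k_j)^2$, not $(K_i-K_j)^2$; these agree only when all $k_i$ are equal (the $K_{av}$ case). Lemma~\ref{L-4-2}, which you correctly cite, is stated for the shifted differences.
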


\begin{proof}
{The completeness follows verbatim from  Lemma \ref{L-2-6} and Lemma \ref{L-3-5}. Moreover, using similar arguments to proofs of the relations $``H_1\Leftrightarrow H_2\Leftrightarrow H_3"$ and $``E_1\Leftrightarrow  E_2\Leftrightarrow E_3"$, we prove that $r(t)$ converges if and only if $K$ is \textbf{attainable}}.

It remains to deal with the exponentially convergence part.
In hyperbolic background geometry, using (\ref{E-16}), we have
\[
\frac{dK_i}{d t}\,=\,-\sum\nolimits_{j=1}^{|V|}\frac{\partial K_i}{\partial u_j}(K_j-k_j).
\]
Set
\[
M(t)\,=\,\max\big\{\,K_1(r(t))-k_1,\cdots,K_{|V|}(r(t))-k_{|V|}\,\big\}.
\]
By applying (\ref{E-11}), a routine computation gives
\[
\frac{d M(t)}{dt}\,\leq\,-\;M(t) \sum\nolimits_{\triangle\in F(\{v_l\})}\frac{\partial\mathrm{Area}(\triangle)}{\partial u_l},
\]
where $l\in \{1,2,\cdots,|V|\}$ satisfies $K_l(r(t))-k_l=M(t)$. By combining it with the last step of the proof of Theorem \ref{T-0-7}, the statement follows.

In Euclidean background geometry, let $g(t)=\sum\nolimits_{i=1}^{|V|}(K_i-k_i)^2$. It is easy to see
\[
\frac{dg(t)}{dt}\,=\,\frac{1}{2}\sum\nolimits_{i=1}^{|V|}\sum\nolimits_{j=1}^{|V|}\frac{\partial K_i}{\partial u_j}(K_i-k_i-K_j+k_j)^2.
\]
Note that the coefficient $\partial K_i/\partial u_j=0$ if the vertices $v_i, v_j$ are not adjacent. Therefore,
\[
\frac{dg(t)}{dt}\,=\,\sum\nolimits_{[v_i,v_j]\in E}\frac{\partial K_i}{\partial u_j}(K_i-k_i-K_j+k_j)^2.
\]
By the following Lemma \ref{L-4-2} and similar arguments to the proof of Theorem \ref{T-0-8}, we prove that $r(t)$ converges exponentially fast to the desired radius vector.
\end{proof}

\begin{lemma}\label{L-4-2}
Let $K=(k_1,\cdots,k_{|V|})$ be an Euclidean curvature vector. In Euclidean background geometry, there exists a positive number $c_1$ such that
\[
\sum\nolimits_{[v_i,v_j]\in E}\frac{\partial K_i}{\partial u_j}(K_i-k_i-K_j+k_j)^2\,\leq\,-c_1\sum\nolimits_{i=1}^{|V|}(K_i-k_i)^2.
\]
\end{lemma}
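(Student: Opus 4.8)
The plan is to recognise the left-hand side as (minus) a Dirichlet-type quadratic form of the Jacobian of the curvature map, and then to reduce the inequality to a spectral-gap estimate that is made uniform by the precompactness of the flow trajectory. Write $\Sigma(u)=\big(\partial K_i/\partial u_j\big)$ for the Jacobian in the variables $u=(\ln r_1,\dots,\ln r_{|V|})$ and put $x_i=K_i-k_i$. Using the symmetry $\partial K_i/\partial u_j=\partial K_j/\partial u_i$ and the vanishing row sums $\sum_j\partial K_i/\partial u_j=0$ from Lemma \ref{L-3-1} (together with the fact that $\partial K_i/\partial u_j=0$ for non-adjacent $v_i,v_j$), a routine expansion yields the identity
\[
\sum\nolimits_{[v_i,v_j]\in E}\frac{\partial K_i}{\partial u_j}\,(x_i-x_j)^2\;=\;-\,\langle x,\Sigma(u)\,x\rangle .
\]
Thus the claim is equivalent to the coercivity bound $\langle x,\Sigma(u)\,x\rangle\ge c_1\|x\|^2$ for the specific vector $x=K(u)-k$.

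Next I would place $x$ in the subspace on which $\Sigma(u)$ is positive definite. By Lemma \ref{L-3-1}, $\Sigma(u)$ is symmetric, positive semi-definite, annihilates $\mathbf 1=(1,\dots,1)$, and is positive definite on the hyperplane $\mathbf 1^{\perp}=\{v:\sum_i v_i=0\}$; in particular $\ker\Sigma(u)=\mathbb R\mathbf 1$. On the other hand, the metric determined by any radius vector obeys the Euclidean Gauss-Bonnet equality $\sum_i K_i=2\pi\chi(S)$ (see (\ref{E-4})), and since $K=(k_1,\dots,k_{|V|})$ is an Euclidean curvature vector one also has $\sum_i k_i=2\pi\chi(S)$. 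Hence $\sum_i x_i=0$, i.e. $x\in\mathbf 1^{\perp}$, and therefore $\langle x,\Sigma(u)\,x\rangle\ge\lambda(u)\|x\|^2$, where $\lambda(u)>0$ is the least eigenvalue of $\Sigma(u)$ restricted to $\mathbf 1^{\perp}$.

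Finally I would upgrade $\lambda(u)>0$ to a constant independent of $u$ along the flow. The solution $u(t)$ of (\ref{E-17}) that we care about converges (this is the setting in which the lemma is invoked, in the exponential-convergence step of Theorem \ref{T-4-1}), so its trajectory has compact closure $\mathcal K$ inside the hyperplane $\{\sum_i u_i=\text{const}\}$. Since $u\mapsto\lambda(u)$ is continuous and strictly positive, it attains a positive minimum $c_1=\min_{\mathcal K}\lambda>0$ on $\mathcal K$; substituting this into the coercivity bound and back into the edge-sum identity proves the lemma, and the differential inequality $g'(t)\le -c_1 g(t)$ for $g=\sum_i(K_i-k_i)^2$ then gives exponential convergence exactly as in Lemma \ref{L-3-7}. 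The hard part here is conceptual rather than computational: $\Sigma(u)$ is genuinely only positive \emph{semi}-definite on all of $\mathbb R^{|V|}$ — the constant direction lies in its kernel, and $\lambda(u)$ really does degenerate as some $r_i\to 0$ — so no global coercivity constant exists; the two facts that rescue the estimate are the Gauss-Bonnet identity, which forces the error $K-k$ to be orthogonal to $\mathbf 1$, and the a priori compactness of the relevant trajectory, which supplies the uniform spectral gap.
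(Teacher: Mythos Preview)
Your argument is correct but follows a genuinely different route from the paper. The paper proceeds in the style of Chow--Luo: it first uses compactness of the trajectory to bound every edge coefficient $\partial K_i/\partial u_j$ from above by a fixed negative constant, thereby reducing the claim to a purely combinatorial Poincar\'e-type inequality
\[
\sum\nolimits_{i}(K_i-k_i)^2\;\le\;c_2\sum\nolimits_{[v_i,v_j]\in E}(K_i-k_i-K_j+k_j)^2,
\]
and then proves the latter by writing each $K_i-k_i$ (using Gauss--Bonnet to subtract the average) as a telescoping sum along an edge path from $v_i$ to $v_j$ and applying Cauchy--Schwarz; the constant involves the combinatorial diameter of the triangulation. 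You instead collapse both steps into a single spectral statement: the Laplacian identity $\sum_{[v_i,v_j]\in E}\Sigma_{ij}(x_i-x_j)^2=-\langle x,\Sigma(u)x\rangle$ together with Lemma~\ref{L-3-1} gives a spectral gap $\lambda(u)>0$ on $\mathbf 1^{\perp}$, Gauss--Bonnet places $x=K-k$ in $\mathbf 1^{\perp}$, and compactness of the trajectory yields a uniform $c_1=\min\lambda(u)$. Your approach is shorter and more conceptual, since the Poincar\'e inequality is absorbed into the positive-definiteness already recorded in Lemma~\ref{L-3-1}; the paper's approach is more elementary, avoids spectral language, and produces an explicit constant in terms of the graph diameter and a bound on the edge weights.
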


\begin{proof}
We follow the method of Chow-Luo \cite{ChowLuo-jdg}. Because $r(t)$ stays in a compact set, the coefficients $\partial K_i/\partial u_j$ in the above inequality are uniformly bounded from above by a negative constant. It suffices to prove
\[
\sum\nolimits_{i=1}^{|V|}(K_i-k_i)^2\,\leq\, c_2\sum\nolimits_{[v_i,v_j]\in E}(K_i-k_i-K_j+k_j)^2
\]
for some positive number $c_2$. By the Gauss-Bonnet equality, we have
\[
\sum\nolimits_{j=1}^{|V|}(K_j-k_j)\,=\,2\pi\chi(S)-2\pi\chi(S)\,=\,0.
\]
It follows that
\[
\begin{aligned}
(K_i-k_i)^2\,=&\,
\Big[K_i-k_i-1/|V|\sum\nolimits_{j=1}^{|V|}\big(K_j-k_j\big)\Big]^2\\
\,=&\,\Big[\sum\nolimits_{j=1}^{|V|}1/|V|\big(K_i-k_i-K_j+k_j\big)\Big]^2\\
\,\leq& \,1/|V|\sum\nolimits_{j=1}^{|V|}(K_i-k_i-K_j+k_j)^2.
\end{aligned}
\]
Hence
\[
\sum\nolimits_{i=1}^{|V|}(K_i-k_i)^2\,\leq\,1/|V|\sum\nolimits_{i=1}^{|V|}\sum\nolimits_{j=1}^{|V|}(K_i-k_i-K_j+k_j)^2.
\]
Meanwhile, there exists a sequence of vertices $v_{l_0}=v_i,
v_{l_1},\cdots,v_{l_q} = v_j$ such that $v_{l_h}$ and $v_{l_{h+1}}$ are adjacent, where $q$ depends
only on $v_i,v_j$ and the triangulation $\mathcal{T}.$ By Cauchy's inequality,
\[
\begin{aligned}
(K_i-k_i-K_j+k_j)^2\,=&\, \Big[\sum\nolimits_{h=0}^{q-1}\big(K_{l_h} -k_{l_h}- K_{l_{h+1}}+k_{l_{h+1}}\big)\Big]^2\\
\leq&\, q \sum\nolimits_{h=0}^{q-1}\big(K_{l_h} -k_{l_h}- K_{l_{h+1}}+k_{l_{h+1}}\big)^2\\
\leq&\,q\sum\nolimits_{[v_\alpha,v_\beta]\in E}\big(K_\alpha-k_\alpha-K_\beta+k_\beta\big)^2.
\end{aligned}
\]
Choose an upper bound $D$ for all such $q$. Combining the above relations, we have
\[
\sum\nolimits_{i=1}^{|V|}(K_i-k_i)^2\,\leq\, D|V|\sum\nolimits_{[v_i,v_j]\in E}(K_i-k_i-K_j+k_j)^2.
\]
Thus the lemma is proved.
\end{proof}

\begin{proof}[\textbf{Proof of Theorem \ref{T-0-9}}]
The theorem is an immediate consequence of Theorem \ref{T-4-1} and the following Proposition \ref{P-4-3}.
\end{proof}

Given a function $\varphi: V_\partial\to [0,\pi)$, let us define a vector $K[\varphi]=(k_1,\cdots,k_{|V|})$ by
\[
k_i\,=\,
\begin{cases}
\; \varphi(v_i), & \text{if}\; v_i\in V_\partial;\\
\; 0, & \text{if}\; v_i\in V\setminus V_\partial.
\end{cases}
\]
The following result plays a crucial role in relating Theorem \ref{T-0-9} to Theorem \ref{T-4-1}.
\begin{proposition}\label{P-4-3}
$K[\varphi]$ is \textbf{attainable} if and only if the conditions $\mathbf{(C2)}$, $\mathbf{(BV2)}$ are satisfied.
\end{proposition}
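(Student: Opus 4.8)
My plan is to strip the notion of being \textbf{attainable} down to a family of purely combinatorial inequalities, and then to recognise $\mathbf{(C2)}$ and $\mathbf{(BV2)}$ as precisely the members of that family indexed by simple edge-loops and simple edge-arcs of $\mathcal T$. Write $\sigma_0(v)=\pi-\varphi(v)$ for $v\in V_\partial$ and $\sigma_0(v)=2\pi$ for $v\in V\setminus V_\partial$. Substituting Lemma~\ref{L-2-5} into (\ref{E-2}) and simplifying (the term $\pi|Lk(A)|$ cancels), one checks that, for any non-empty $A\subseteq V$, the inequality (\ref{E-2}) for the vector $K[\varphi]$ is equivalent to
\begin{equation*}
\pi\,|F(A)|\;>\;\sum\nolimits_{v\in A}\sigma_0(v)\;+\;\sum\nolimits_{(e,v)\in Lk(A)}\Theta(e).\tag{$\diamond_A$}
\end{equation*}
Hence $K[\varphi]$ is \textbf{attainable} if and only if $(\diamond_A)$ holds for every proper non-empty $A$. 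I will show that this assertion, and also the conjunction of $\mathbf{(C2)}$ and $\mathbf{(BV2)}$, are both equivalent to the single statement: \emph{$(\diamond_A)$ holds whenever $A$ is a \textbf{(semi) enclosing set}.}

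\emph{Step 1: special sets are exactly $\mathbf{(C2)}$ and $\mathbf{(BV2)}$.} Let $\gamma=e_1\cdots e_s$ be a \textbf{pseudo-Jordan} curve bounding a disk $\mathbb K$ with \textbf{enclosing set} $A$ (the vertices interior to $\mathbb K$). Since the triangulated closed disk $\overline{\mathbb K}$ has Euler characteristic $1$, a cell count—via Lemma~\ref{L-2-5} applied to this disk—evaluates $|F(A)|$ and $|Lk(A)|$, while the $\Theta$-sum in $(\diamond_A)$ collapses onto the boundary edges, so that $(\diamond_A)$ becomes exactly $\sum_{l=1}^s\Theta(e_l)<(s-2)\pi$, i.e. $\mathbf{(C2)}$ for $\gamma$. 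For a \textbf{semi pseudo-Jordan} arc $\gamma=e_1\cdots e_s$ with \textbf{semi enclosing set} $W$, the bounded region $\lambda\cup\mathbb K$ again has Euler characteristic $1$ but meets $\partial S$ along the arc $\lambda$; the same bookkeeping turns $(\diamond_W)$ into $\sum_{v\in W\cap V_\partial}\varphi(v)+\sum_{l=1}^s(\pi-\Theta(e_l))>\pi$, i.e. $\mathbf{(BV2)}$. Run backwards, these identities show $\mathbf{(C2)}$ and $\mathbf{(BV2)}$ give $(\diamond_A)$ on every \textbf{(semi) enclosing set}. One caveat: an ``ear'' (a triangle of $\overline{\mathbb K}$ with all three vertices on $\gamma$, more generally a chord of $\gamma$) splits $\gamma$ into strictly shorter \textbf{pseudo-Jordan} curves, and is handled by induction on $s$; likewise in the arc case.

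\emph{Step 2: a general $A$ reduces to special sets.} Conversely, assume $(\diamond_A)$ for all \textbf{(semi) enclosing sets}; I must deduce $(\diamond_A)$ for an arbitrary proper non-empty $A$. Each of $|F(A)|$, $\sum_{v\in A}\sigma_0(v)$ and $\sum_{(e,v)\in Lk(A)}\Theta(e)$ is additive over the connected components of the subsurface $\overline{G(A)}$ (partition $A$ accordingly), so a failure of $(\diamond_A)$ already occurs on one component, and we may assume $\overline{G(A)}$ connected. Now read $(\diamond_A)$ in the form of (\ref{E-2}): it can fail only when $2\pi\chi(G(A)\setminus\partial S)+\pi\chi(G(A)\cap\partial S)\ge 0$, since otherwise the right-hand side is negative while $\sum_{v\in A\cap V_\partial}\varphi(v)+\sum_{(e,v)\in Lk(A)}(\pi-\Theta(e))\ge 0$. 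By the classification of surfaces this pins $\overline{G(A)}$ down to a disk or a half-disk (after cutting off ears/chords as above), whose interface circle or arc is then a \textbf{pseudo-Jordan} curve or a \textbf{semi pseudo-Jordan} arc with \textbf{(semi) enclosing set} exactly $A$ (or $V\setminus A$ on the other side); $(\diamond_A)$ then follows from the corresponding instance of Step 1. The one remaining degenerate configuration, $\overline{G(A)}=S$ with the full subcomplex on $V\setminus A$ a graph, is disposed of directly from the global cell identity $\pi|F|-\sum_{v\in V}\sigma_0(v)=\sum_{v\in V_\partial}\varphi(v)-2\pi\chi(S)$ together with $\sigma_0>0$ and $\Theta<\pi$; the symmetric case $\overline{G(V\setminus A)}=S$ is the same.

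\emph{Expected main obstacle.} The technical heart is Step 2: matching the decomposition of an arbitrary $A$ along the interface of $\overline{G(A)}$ against the list of (semi) pseudo-Jordan curves and arcs so that the (semi) enclosing sets come out correctly and the triangle, link and Euler-characteristic counts balance \emph{exactly} into $(\diamond_A)$, while verifying that every piece produced is still proper and non-empty so that $\mathbf{(C2)}$, $\mathbf{(BV2)}$ apply. The low-dimensional degenerate cases, and the ear/chord induction of Step 1, are where the bookkeeping is most delicate.
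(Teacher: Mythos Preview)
Your proposal is correct and follows essentially the same route as the paper: both arguments reduce ``attainable'' to the single inequality \eqref{E-2} (equivalently your $(\diamond_A)$), pass to connected $G(A)$, and then use an Euler-characteristic trichotomy to see that the only non-trivial cases are exactly those where the link edges form a \textbf{pseudo-Jordan} curve or a \textbf{semi pseudo-Jordan} arc, i.e.\ the situations governed by $\mathbf{(C2)}$ and $\mathbf{(BV2)}$.

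The main organisational difference is that the paper never rewrites \eqref{E-2} as your $(\diamond_A)$; it keeps the topological right-hand side $2\pi\chi(G(A)\setminus\partial S)+\pi\chi(G(A)\cap\partial S)$ and invokes Proposition~\ref{P-5-1} to read $\chi(G(A)\cap\partial S)$ as minus the number of boundary arcs. This lets the case analysis run cleanly on the pair $(g_0,n_0)$ and the count $n_\partial(A)$: if $g_0\ge 1$ or $n_0\ge 2$ or $n_\partial(A)\ge 2$ the inequality is automatic, while $(g_0,n_0,n_\partial(A))=(0,1,0)$ and $(0,1,1)$ are precisely $\mathbf{(C2)}$ and $\mathbf{(BV2)}$. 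Because the paper always argues with $G(A)$ and $Lk(A)$ themselves (rather than with a fixed curve $\gamma$ and its enclosed disk), it does not need your ear/chord induction: the boundary of $G(A)\setminus\partial S$ is by construction the curve formed by the link edges, so no ``extra'' triangles have to be peeled off. Likewise, the paper does not single out your degenerate configuration $\overline{G(A)}=S$; it is absorbed into the case $n_0\ge 1$ (one checks, via Lemma~\ref{L-2-5}, that removing a vertex from $A$ drops $\chi(G(A)\setminus\partial S)$ by one, so this case still falls under the ``$\chi\le 0$'' branch once $A$ is proper). Your reformulation is a perfectly good alternative, but the paper's version is a bit shorter precisely because it avoids these detours.
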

The proof is based on combinatorial arguments and  is postponed to Appendix.

\subsection{Planar circle patterns}

Setting $(g,n)=(0,1)$, Theorem \ref{T-0-8}  gives the following result.
\begin{theorem}\label{T-4-3}
 Let $\mathcal T$ be a triangulation of a topological closed disk. Suppose that $\Theta:E\to [0,\pi)$ and $\varphi: V_\partial\to [0,\pi)$ are two functions such that $\mathbf{(C1)}$, $\mathbf{(C2)}$ and the following conditions are satisfied:
 \begin{itemize}
 \item[$\mathbf{(Q1)}$] The Gauss-Bonnet inequality (resp. equality) holds:
\[
\sum\nolimits_{v\in V_\partial}\varphi(v) \,>\,2\pi \quad \big(\;\text{resp.}\;\sum\nolimits_{v\in V_\partial}\varphi(v)\,=\,2\pi\;\big).
\]
\item[$\mathbf{(Q2)}$] If the edges $e_1,\cdots,e_s$ form a \textbf{semi pseudo-Jordan} arc $\gamma$ joining two distinct boundary vertices, then for any non-empty \textbf{semi enclosing set} $W$ of $\gamma$
\[
\sum\nolimits_{v\in W\cap V_\partial}\varphi(v)+
\sum\nolimits_{l=1}^s\big(\pi-\Theta(e_l)\big)\,>\,\pi.
\]
\end{itemize}
Then there exists a convex hyperbolic (resp. Euclidean) polygon $Q$ which has external angles given by $\varphi$ and supports a $\mathcal T$-type circle pattern $\mathcal P$ with exterior intersection angles given by $\Theta$. Moreover, the pair $(Q,\mathcal P)$ is unique up to isometries (resp. similarities).
\end{theorem}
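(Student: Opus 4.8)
The plan is to read off Theorem~\ref{T-4-3} as the special case $(g,n)=(0,1)$ of Theorem~\ref{T-0-9}, and then to convert the abstract metric it produces into an honest convex polygon by developing. A topological closed disk has $\chi(S)=1$, one boundary curve and no handles; since Theorem~\ref{T-0-9} imposes no constraint on $\chi(S)$, it is applicable once its hypotheses are matched. Conditions $\mathbf{(C1)}$ and $\mathbf{(C2)}$ appear verbatim in both statements, and with $\chi(S)=1$ the Gauss--Bonnet requirement $\mathbf{(BV1)}$ reads $\sum_{v\in V_\partial}\varphi(v)>2\pi$ (resp.\ $=2\pi$), which is precisely $\mathbf{(Q1)}$. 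So the whole reduction comes down to identifying $\mathbf{(BV2)}$ with $\mathbf{(Q2)}$ in the disk.

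For that step I would first establish the purely topological fact that, in a disk, every \textbf{semi pseudo-Jordan} arc $\gamma$ joins two \emph{distinct} boundary vertices. Indeed, if $\gamma\cup\lambda$ is a \textbf{pseudo-Jordan} curve for an open boundary arc $\lambda$, then the endpoints of $\gamma$ coincide with the endpoints of $\overline{\lambda}$; were they equal, $\lambda$ would have to be $\partial S$ minus one point, and one component of $S\setminus(\gamma\cup\lambda)$ would be an annulus, contradicting the requirement that a \textbf{pseudo-Jordan} curve bound a simply-connected region. Hence in a disk the \textbf{semi pseudo-Jordan} arcs are exactly the edge-paths joining two distinct boundary vertices, and each such $\gamma$ cuts the disk into two sub-disks whose vertex sets are its two \textbf{semi enclosing sets}. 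Since $\mathbf{(BV2)}$ only constrains \emph{non-empty} \textbf{semi enclosing sets} (so the ``non-vacant'' qualifier is automatically built in), $\mathbf{(BV2)}$ and $\mathbf{(Q2)}$ then say exactly the same thing.

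Next I would invoke Theorem~\ref{T-0-9}: it supplies a constant-curvature ($0$ or $-1$) metric $\mu$ on $S$ with a single simple piecewise-geodesic boundary curve whose turning angles are the prescribed $\varphi(v)\in[0,\pi)$, carrying a $\mathcal T$-type circle pattern $\mathcal P$ with exterior intersection angles $\Theta$. Because every turning angle is $<\pi$ and every boundary edge is geodesic, $(S,\mu)$ is locally convex; being simply connected and of non-positive curvature, it develops isometrically onto a convex region of $\mathbb{H}^2$ (resp.\ $\mathbb{E}^2$), and, the boundary being piecewise geodesic with finitely many corners, this region is a convex polygon $Q$ with external angles $\varphi$ (the Gauss--Bonnet count $\sum\varphi(v)=2\pi$, resp.\ $2\pi+\mathrm{Area}$, being consistent with $\mathbf{(Q1)}$). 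The image of $\mathcal P$ under the developing map is then the asserted circle pattern in $Q$ realizing $(\mathcal T,\Theta)$.

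Finally, for uniqueness I would use the uniqueness clause of Theorem~\ref{T-0-9}: the pair $(\mu,\mathcal P)$ on $S$ is unique up to isometry in the hyperbolic case and up to similarity in the Euclidean case (where $\mu$ is determined only up to scaling). Since $(Q,\mathcal P)$ is the developed image and the developing map is determined up to post-composition by an isometry of $\mathbb{H}^2$ (resp.\ a similarity of $\mathbb{E}^2$, which absorbs the scaling ambiguity), $(Q,\mathcal P)$ inherits the same uniqueness. I expect the main obstacle to be the combinatorial/topological bookkeeping behind $\mathbf{(BV2)}\Leftrightarrow\mathbf{(Q2)}$ — precisely pinning down which edge-paths of a disk are \textbf{semi pseudo-Jordan} arcs and what their \textbf{semi enclosing sets} are — rather than the developing step, which is the standard fact that a simply-connected, locally convex, constant-curvature surface-with-corners embeds as a convex polygon.
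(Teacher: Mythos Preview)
Your proposal is correct and follows the same route as the paper, which derives Theorem~\ref{T-4-3} as the $(g,n)=(0,1)$ specialization of Theorem~\ref{T-0-9} in a single sentence (the paper's text actually cites Theorem~\ref{T-0-8} here, which appears to be a typo). You supply considerably more detail than the paper does---the identification $\mathbf{(BV2)}\Leftrightarrow\mathbf{(Q2)}$ in the disk and the developing-map step realizing $(S,\mu)$ as an honest convex polygon in the model space---neither of which the paper spells out.
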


Turning to the limiting case that $\varphi\equiv \pi$, the following result is straightforward.
\begin{corollary}\label{C-4-4}
 Let $\mathcal T$ be a triangulation of a topological closed disk. Suppose that  $\Theta:E\to [0,\pi)$ is a function satisfying $\mathbf{(C1)}$, $\mathbf{(C2)}$. Then in the hyperbolic disk there exists a $\mathcal T$-type circle pattern $\mathcal P$  whose exterior intersection angle function is $\Theta$ and whose boundary circles are horocyles. Moreover, $\mathcal P$ is unique up to isometries.
\end{corollary}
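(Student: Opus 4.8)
The plan is to realise Corollary \ref{C-4-4} as the limit $\varphi\uparrow\pi$ of Theorem \ref{T-4-3}. A triangulated closed disk has $|V_\partial|\geq 3$, and $\delta:=\min_{e\in E}\bigl(\pi-\Theta(e)\bigr)>0$; so for every integer $n>\max\{3,\pi/\delta\}$ the constant function $\varphi_n\equiv(1-1/n)\pi$ satisfies the inequality form of $\mathbf{(Q1)}$, since $|V_\partial|(1-1/n)\pi>2\pi$, and also $\mathbf{(Q2)}$: given a semi pseudo-Jordan arc $e_1\cup\cdots\cup e_s$ joining two distinct boundary vertices and a non-empty semi enclosing set $W$, either $W\cap V_\partial\neq\emptyset$, so the left side of $\mathbf{(Q2)}$ is at least $(1-1/n)\pi+\delta>\pi$, or $W\cap V_\partial=\emptyset$, in which case $W$ lies in the interior, the enclosing boundary arc is a single edge $e_0$, and $e_0\cup e_1\cup\cdots\cup e_s$ is a non-vacant pseudo-Jordan curve, so $\mathbf{(C2)}$ yields $\sum_{l=1}^s\bigl(\pi-\Theta(e_l)\bigr)>\pi$. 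Hence Theorem \ref{T-4-3} produces, for each such $n$, a convex hyperbolic polygon $Q_n$ with external angles $\varphi_n$ carrying a $\mathcal T$-type circle pattern $\mathcal P_n$ realising $\Theta$, unique up to isometry; its radius vector $r^{(n)}$ obeys $K_i(r^{(n)})=0$ for $v_i\notin V_\partial$ and $K_i(r^{(n)})=(1-1/n)\pi$ for $v_i\in V_\partial$.

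Next I would prove compactness: after passing to a subsequence, $r_i^{(n)}\to r_i^\ast\in(0,+\infty)$ for each interior vertex and $r_i^{(n)}\to+\infty$ for each boundary vertex. The boundary radii must diverge because $K_i(r^{(n)})\to\pi$ forces $\sigma(v_i)(r^{(n)})\to0$, hence every inner angle at $v_i$ tends to $0$; but, by inspecting the hyperbolic cosine law as the neighbouring radii are sent to $0$, to a positive limit, or to $+\infty$ (using Lemmas \ref{L-1-5} and \ref{L-1-6}), an inner angle $\vartheta_i^{\triangle}$ is bounded below by a positive constant whenever $r_i$ stays in a compact subset of $(0,+\infty)$. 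No interior radius diverges, since by Lemma \ref{L-1-6} that would give $\sigma(v_i)(r^{(n)})\to0$, hence $K_i(r^{(n)})\to2\pi\neq0$. Finally, if the set $A$ of interior vertices whose radii tend to $0$ were non-empty, then $A$ is a proper non-empty subset of $V$, and Lemma \ref{L-1-5} (whose limits persist even when the remaining radii diverge) together with $K_i(r^{(n)})=0$ on $A$ and the combinatorial identity of Lemma \ref{L-2-5} would give
\[
\sum\nolimits_{(e,v)\in Lk(A)}\bigl(\pi-\Theta(e)\bigr)\,=\,2\pi\chi\bigl(G(A)\setminus\partial S\bigr)+\pi\chi\bigl(G(A)\cap\partial S\bigr),
\]
contradicting the strict inequality $(\ref{E-2})$ for $A$, which holds because the curvature $K[\varphi_n]$ realised by $\mathcal P_n$ lies in the image of the curvature map (Theorem \ref{T-0-4}).

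With the radii controlled I would pass to the geometric limit: the triangles assembling $Q_n$ converge to hyperbolic triangles whose vertices in $V_\partial$ are ideal, gluing to an ideal polygon in $\mathbb H^2$; the circle of radius $r_i^{(n)}$ at an interior vertex converges to a genuine circle, while the circle at a boundary vertex, whose radius diverges, converges to the horocycle based at the corresponding ideal point. This is the desired $\mathcal T$-type pattern $\mathcal P$ in the hyperbolic disk with horocyclic boundary circles. For uniqueness up to isometry I would observe that such a $\mathcal P$ is determined by its interior radii, governed by the \emph{reduced} curvature map that sends the interior $u$-coordinates ($u_i=\ln\tanh(r_i/2)$) to the interior curvatures while holding all boundary radii at $+\infty$; the inner angles of a hyperbolic triangle with a fixed ideal vertex depend smoothly on the two finite radii, so this map is smooth, and the computation of Lemma \ref{L-2-1} shows its Jacobian — a principal block of the full Jacobian with the (now vanishing) boundary contributions removed — remains symmetric and positive definite, whence the associated energy function is strictly convex and its unique critical point, where all interior curvatures vanish, pins down $\mathcal P$.

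The main difficulty will be the compactness step: one must simultaneously rule out degeneration of the interior radii to $0$ or $+\infty$ and force every boundary radius to escape to $+\infty$, and this is precisely where $\mathbf{(C1)}$, $\mathbf{(C2)}$, the image characterisation of Theorem \ref{T-0-4}, and the limiting behaviours in Lemmas \ref{L-1-5}, \ref{L-1-6}, \ref{L-2-5} all have to be combined. Once convergence of the radii is in hand, the geometric limit and the identification of the limiting boundary circles as horocycles are routine.
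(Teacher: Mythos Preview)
Your approach is exactly the one the paper intends: the paper simply declares Corollary~\ref{C-4-4} to be ``the limiting case $\varphi\equiv\pi$'' of Theorem~\ref{T-4-3} and calls it straightforward, supplying none of the compactness, geometric-limit, or uniqueness details you sketch. Your proposal is thus a faithful and considerably more careful fleshing-out of the paper's one-line remark.
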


Another corollary is a generalization of Marden-Rodin's theorem \cite{Marden-Rodin}.

\begin{theorem}\label{T-4-5}
Let $\mathcal T$ be a triangulation of the sphere. Suppose that $\Theta:E\to [0,\pi)$ satisfies $\mathbf{(C1)}$ and the following conditions:
\begin{itemize}
\item[$(i)$]There exists a triangle of $\mathcal T$ with boundary edges $e_a,e_b,e_c$ such that $\sum_{l=a,b,c}\Theta(e_l)<\pi$.
\item[$(ii)$] If the edges $e_1,e_2,\cdots,e_s$ form a simple closed curve which separates the vertices of $\mathcal T$, then $\sum_{l=1}^s\Theta(e_l)<(s-2)\pi$
\end{itemize}
Then there exists a $\mathcal T$-type circle pattern $\mathcal P$ on the Riemann sphere $\hat{\mathbb C}$ with the exterior intersection angles given by $\Theta$. Moreover, $\mathcal P$ is unique up to M\"{o}bius transformations.
\end{theorem}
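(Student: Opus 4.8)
The plan is to reduce the sphere case to the disk case already treated in Theorem \ref{T-4-3} (equivalently Theorem \ref{T-0-9} with $(g,n)=(0,1)$), by removing one face and realizing its complement as a Euclidean polygon, then gluing back in the missing triangle. Concretely, let $\triangle_0=\triangle(v_av_bv_c)$ be the triangle from hypothesis $(i)$, and let $\mathcal T'$ be the triangulation of the closed disk obtained from $\mathcal T$ by deleting the open face $\triangle_0$; its boundary $\partial$-vertices are exactly $v_a,v_b,v_c$. I would prescribe turning angles $\varphi$ on $\{v_a,v_b,v_c\}$ equal to the inner angles $\pi-\vartheta_a^{\triangle_0},\pi-\vartheta_b^{\triangle_0},\pi-\vartheta_c^{\triangle_0}$ of the Euclidean three-circle configuration attached to $\triangle_0$ with some auxiliary radii $r_a,r_b,r_c$; then $\sum_{v\in V_\partial}\varphi(v)=3\pi-(\vartheta_a^{\triangle_0}+\vartheta_b^{\triangle_0}+\vartheta_c^{\triangle_0})=2\pi$ by Lemma \ref{L-1-4}, so the Euclidean Gauss--Bonnet equality $\mathbf{(Q1)}$ holds. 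The point of hypothesis $(i)$ is precisely that, by Lemma \ref{L-1-4}, each $\varphi(v)=\pi-\vartheta^{\triangle_0}\in(\Theta(e),\pi)$ lies in $[0,\pi)$ and $\pi-\varphi(v)=\vartheta^{\triangle_0}>0$; and since $\sum\Theta(e_l)<\pi$ forces each configuration angle to be positive, the prescription is admissible.

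Next I would check that the remaining hypotheses of Theorem \ref{T-4-3} hold for $(\mathcal T',\Theta|_{E'},\varphi)$. Condition $\mathbf{(C1)}$ is inherited. For $\mathbf{(C2)}$ on $S'$ (the disk): a non-vacant pseudo-Jordan curve in the disk bounds a disk missing at least the outer region, hence corresponds to a simple closed curve on the sphere separating its enclosed vertices from the complement, and hypothesis $(ii)$ gives $\sum\Theta(e_l)<(s-2)\pi$ as required. For $\mathbf{(Q2)}$: a semi pseudo-Jordan arc $\gamma$ of $\mathcal T'$ joining two of $\{v_a,v_b,v_c\}$, completed by a boundary sub-arc $\lambda$, yields on the sphere a closed curve $\gamma\cup\lambda'$ where $\lambda'$ runs through at most one or two edges of $\triangle_0$; applying hypothesis $(ii)$ to this sphere-separating curve and then substituting $\pi-\varphi(v)=\vartheta^{\triangle_0}$ and the Euclidean triangle angle-sum identity converts the sphere inequality into exactly the required $\sum_{v\in W\cap V_\partial}\varphi(v)+\sum_{l=1}^s(\pi-\Theta(e_l))>\pi$. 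This bookkeeping — matching the finitely many ways $\lambda'$ can traverse $\partial\triangle_0$ against the cases of $\mathbf{(Q2)}$ — is the step I expect to be the most delicate, though it is purely combinatorial.

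With all hypotheses verified, Theorem \ref{T-4-3} produces a Euclidean convex polygon $Q$ with external angles $\varphi$ supporting a $\mathcal T'$-type circle pattern with angles $\Theta$, and $Q$ is unique up to similarity. By construction $Q$ has three vertices with interior angles $\vartheta_a^{\triangle_0},\vartheta_b^{\triangle_0},\vartheta_c^{\triangle_0}$; but a Euclidean triangle is determined up to similarity by its angles, so $Q$ is similar to the three-circle configuration triangle $\triangle_{abc}$ — after rescaling, the polygon's three corners are exactly the centers of three mutually intersecting circles of the right radii meeting at angles $\Theta(e_a),\Theta(e_b),\Theta(e_c)$. Gluing the closed face $\triangle_0$ back onto $\partial Q$ (the edge lengths automatically match because both the polygon side and the triangle side equal the distance between the same two circle centers) reconstitutes a Euclidean structure on $S^2$ minus one point, i.e.\ a circle pattern on $\hat{\mathbb C}\setminus\{\ast\}\cong\mathbb C$, which extends to a $\mathcal T$-type circle pattern $\mathcal P$ on $\hat{\mathbb C}$ with exterior intersection angles $\Theta$. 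For uniqueness up to Möbius transformations: any two realizations, normalized by sending $\triangle_0$ to a fixed Euclidean triangle, restrict to $\mathcal T'$-type patterns on a disk with the same $\Theta$ and the same induced $\varphi$, hence coincide up to similarity by the uniqueness in Theorem \ref{T-4-3}; absorbing the normalization and the similarity into a Möbius transformation of $\hat{\mathbb C}$ gives the claim. The main obstacle, as noted, is the careful combinatorial translation of the separating-curve condition $(ii)$ into $\mathbf{(C2)}$ and $\mathbf{(Q2)}$ across the excised face; the geometric gluing and the uniqueness argument are then routine.
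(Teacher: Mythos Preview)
Your route differs from the paper's. You remove a face and invoke the Euclidean polygon result (Theorem~\ref{T-4-3}) with a turning-angle function $\varphi$ manufactured from an auxiliary three-circle configuration; the paper instead removes $\triangle_\infty$ and appeals to Corollary~\ref{C-4-4}, the \emph{hyperbolic} limiting case $\varphi\equiv\pi$ with horocycle boundary circles. Because hyperbolic circles and horocycles in the Poincar\'e model are already Euclidean circles with the same intersection angles, that disk pattern \emph{is} a $\mathcal T$-type pattern on $\hat{\mathbb C}$ with no gluing step, and uniqueness reduces to the fact that M\"obius maps preserving the unit disk are hyperbolic isometries. The paper's path is shorter precisely because Corollary~\ref{C-4-4} needs only $\mathbf{(C1)}$ and $\mathbf{(C2)}$; there is no $\mathbf{(Q2)}$ to verify.

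Your sketch has two genuine gaps. First, in checking $\mathbf{(Q2)}$ you assert that the closed curve $\gamma\cup\lambda'$ is ``sphere-separating'' and then apply $(ii)$, but this can fail: if the region of the disk between $\gamma$ and $\lambda'$ contains no interior vertex (for example a single face of $\mathcal T'$ when $s=2$), then on the sphere one side of $\gamma\cup\lambda'$ consists only of that region together with $\triangle_0$ and carries no vertex, so $(ii)$ is unavailable. In such cases $\mathbf{(Q2)}$ genuinely depends on the value of $\varphi$ at the relevant corner, hence on your \emph{arbitrary} auxiliary radii $r_a,r_b,r_c$; since $\varphi(v_a)+\varphi(v_b)+\varphi(v_c)=2\pi$ you cannot push all three close to $\pi$, and you give no argument that a single choice satisfies every instance of $\mathbf{(Q2)}$ simultaneously. (A miniature version of the same oversight appears in your $\mathbf{(C2)}$ check: the boundary cycle $e_ae_be_c$ is itself a non-vacant pseudo-Jordan curve in the disk whose sphere image bounds the single face $\triangle_0$ and does not separate vertices---this case is exactly where hypothesis $(i)$, not $(ii)$, is needed.) Second, your uniqueness step ``normalize by sending $\triangle_0$ to a fixed Euclidean triangle'' is ill-posed as stated: M\"obius transformations do not send circle centers to circle centers, so they do not act on the triangle of centers. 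What you would need is the separate (true, but unstated and nontrivial) fact that any two triples of circles on $\hat{\mathbb C}$ with prescribed pairwise intersection angles are M\"obius-equivalent; only after invoking this can you align the boundary triples and apply the uniqueness of Theorem~\ref{T-4-3}.
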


A circle pattern $\mathcal P$ on the sphere can be stereographically projected to the plane. Since stereographic projection is conformal, the exterior intersection angles are the same. More precisely, one can construct the circle pattern on the sphere by constructing the corresponding planar pattern and then projecting it back to the sphere. Therefore, to prove Theorem \ref{T-4-5}, one needs only to construct a proper planar circle pattern.

\begin{proof}[\textbf{Proof of Theorem \ref{T-4-5}}]
Let $\bigtriangleup_\infty$ be the triangle of $\mathcal T$ whose boundary edges are $e_a,e_b,e_c$ and let $v_a,v_b,v_c$ be the vertices opposite to $e_a,e_b,e_c$. By stereographic projection, then $\mathcal T'=\mathcal T\setminus \bigtriangleup_\infty$ gives a triangulation of $\triangle_\infty$. Moreover, one can regard $\Theta:E\to [0,\pi)$ as a function defined on the edge set of $\mathcal T'$. For existence part,  due to the stereographic projection construction, it suffices to show that there exists a $\mathcal T'$-type circle pattern $\mathcal P$ on the unit disk  with the exterior intersection angles given by $\Theta$, which is a result of Corollary \ref{C-4-4}. For rigidity part, note that a M\"{o}bius transformation fixes the unit disk if and only if it is an isometry of hyperbolic disk. The statement follows from the rigidity part of Corollary \ref{C-4-4}.
\end{proof}

Finally, let us consider the circle patterns on rectangles.
\begin{proof}[\textbf{Proof of Corollary \ref{C-0-10}}]
Let $V_{*}\subset V_\partial$ be the set of corner vertices. We define a function $\varphi: V_\partial \to [0,\pi)$ by setting
\[
\varphi(v)\,=\,
\begin{cases}
\; \pi/2, & \text{if}\; v\in V_\ast;\\
\; 0, & \text{if}\; v\in V_\partial \setminus V_\ast.
\end{cases}
\]
Under the conditions \textbf{(R1)}, \textbf{(R2)}, it is easy to see that \textbf{(Q1)} and \textbf{(Q2)} are satisfied. By Theorem \ref{T-4-3}, the conclusion holds.
\end{proof}

\section{Appendix}\label{Appendix}

This section is devoted to several combinatorial facts. Specifically, we  prove  Lemma \ref{L-2-5} and Proposition \ref{P-4-3}.

\begin{proof}[\textbf{Proof of Lemma \ref{L-2-5}}]
Let $E_\partial$ be the set of boundary edges of $\mathcal T$. Set
\[
E_\partial (A)\,=\,E(A)\cap E_\partial,\quad E_O(A)\,=\,E(A)\setminus E_\partial(A).
\]
It is easy to see
\[
2|E_O(A)|+|E_\partial(A)|+|Lk(A)|\,=\,3|F(A)|.
\]
We have
\[
\begin{aligned}
&2|A|-|F(A)|+|Lk(A)|-|A\cap V_\partial|\\
=&\,2|A|-|F(A)|+|Lk(A)|-|A\cap V_\partial|+3|F(A)|-\big(\,2|E_O(A)|+|E_\partial(A)|+|Lk(A)|\,\big)\\
=&\,2|V_O(A)|-2|E_O(A)|+2|F(A)|+|V_\partial(A)|-|E_\partial(A)|,
\end{aligned}
\]
where $V_\partial(A)=A\cap V_\partial$, $V_O(A)=A\setminus V_\partial$. Note that  $V_O(A)$, $E_O(A)$ and $F(A)$ are the sets of vertices, edges and triangles of  $G(A)\setminus \partial S$. Hence
\[
|V_O(A)|-|E_O(A)|+|F(A)|\,=\,\chi(G(A)\setminus \partial S).
\]
Similarly,
\[
|V_\partial(A)|-|E_\partial(A)|\,=\,\chi(G(A)\cap \partial S).
\]
Combining the above relations, then
\[
2|A|-|F(A)|+|Lk(A)|-|A\cap V_\partial|\,=\,2\chi(G(A)\setminus \partial S)+\chi(G(A)\cap\partial S).
\]
Thus the proposition is proved.
\end{proof}

Another useful combinatorial fact is Proposition \ref{P-4-3}. Before presenting the proof, let us establish the following proposition.

\begin{proposition}\label{P-5-1}
The Euler characteristic $\chi(G(A)\cap\partial S)$ is equal to the opposite of the number of open arc components of $G(A)\cap\partial S$.
\end{proposition}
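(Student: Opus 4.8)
The plan is to exploit the explicit description of $G(A)\cap\partial S$ that already appears in the proof of Lemma~\ref{L-2-5}: as a subset of $S$ it is the union of the boundary vertices lying in $A$ together with the open boundary edges that have at least one endpoint in $A$, so that $\chi(G(A)\cap\partial S)=|A\cap V_\partial|-|E(A)\cap E_\partial|$, and $\chi(\cdot)$ is additive over disjoint subcomplexes. Since $\partial S$ is a disjoint union of $n$ simple closed curves $C_1,\dots,C_n$, and both the Euler characteristic and the number of open-arc components are additive over the $C_m$, it suffices to prove the identity $\chi(G(A)\cap C)=-(\#\text{ open-arc components of }G(A)\cap C)$ for a single boundary circle $C$.

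The next step is to identify the connected components of $G(A)\cap C$. I would first check that $G(A)\cap C$ is an open subset of $C$: at an interior point of a boundary edge belonging to $E(A)$ this is clear, and at a vertex $v\in A$ the two boundary edges of $C$ incident to $v$ both have an endpoint in $A$, hence lie in $G(A)$, so a whole neighbourhood of $v$ in $C$ lies in $G(A)\cap C$. Each connected component of $G(A)\cap C$ is then a connected open subset of $C$, hence either all of $C$ or an open arc; in particular there are no isolated-point components. If $G(A)\cap C$ is empty or is all of $C$ it contributes $0$ to each side (no open-arc component, and it is either empty or a circle with equally many vertices and edges). In the remaining case I would show that the open arcs are in bijection with the maximal blocks of consecutive vertices of $C$ lying in $A$: distinct blocks give distinct components because they are separated along $C$ by at least one vertex not in $A$, which lies outside $G(A)\cap C$; and a block of $k$ vertices carries exactly $k+1$ boundary edges of $E(A)$, namely its $k-1$ interior edges together with the two end edges whose far endpoints lie outside $A$. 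Hence each such arc contributes $k-(k+1)=-1$ to $\chi(G(A)\cap C)$ and $1$ to the component count; summing over the blocks, and then over $C_1,\dots,C_n$, gives the claim.

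I do not expect a genuine obstacle here: the statement is essentially a bookkeeping reformulation of the identity $\chi(G(A)\cap\partial S)=|A\cap V_\partial|-|E(A)\cap E_\partial|$ in terms of connected components. The only care needed is in the edge count for a single block and in the degenerate configurations — a boundary circle with very few vertices, or one all of whose vertices lie in $A$ — which must be excluded from, or checked separately against, the ``block'' analysis; these verifications are routine.
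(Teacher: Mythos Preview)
Your argument is correct and is essentially a fleshed-out version of the paper's own proof. The paper simply observes that $G(A)\cap\partial S$ is a $1$-manifold, so its components are circles (contributing $\chi=0$) or open arcs (contributing $\chi=-1$), and sums; you supply the combinatorial justification for these two facts via the block-counting $k-(k+1)=-1$ and the openness check, which the paper leaves implicit.
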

\begin{proof}
Observe that $G(A)\cap\partial S$ is an 1-manifold. Let $\lambda_1,\cdots,\lambda_m,\gamma_{m+1},\cdots,\gamma_s$ be all its components, where $\lambda_1,\cdots,\lambda_m$ are open arcs. Then
\[
\chi(G(A)\cap\partial S)\,=\,\sum\nolimits_{i=1}^m\chi(\lambda_i)\,+\sum\nolimits_{j=m+1}^s\chi(\gamma_j)=\,\sum\nolimits_{i=1}^m (-1)\,+\sum\nolimits_{j=m+1}^s 0\,=\,-m.
\]
The statement follows.
\end{proof}

\begin{proof}[\textbf{Proof of Proposition \ref{P-4-3}}]
First, let us consider the ``if" part. To show that $K[\varphi]$ is \textbf{attainable}, for any proper non-empty subset $A$ of $V$, one needs to check that
\begin{equation}\label{E-18}
\sum\nolimits_{v\in A\cap V_\partial}\varphi(v)\,>\,-\sum\nolimits_{(e,v)\in Lk(A)}\big(\pi-\Theta(e)\big)+2\pi\chi(G(A)\setminus \partial S) +\pi\chi(G(A)\cap \partial S).
\end{equation}
Without loss of generality, assume that $G(A)\setminus \partial S$ is connected and is of topological type $(g_0,n_0)$. Then $n_0\geq 1$ and $Lk(A)$ is non-empty. Meanwhile, Proposition \ref{P-5-1} implies that $\chi(G(A)\cap \partial S)$ is equal to $-n_\partial (A)$, where $n_\partial (A)$ is the number of open arc components of $G(A)\cap \partial S$.

If $g_0\geq 1$ or $n_0\geq 2$, then $\chi(G(A)\setminus \partial S)=2-2g_0-n_0\leq 0$. Noting that $\varphi\geq 0$ and $n_\partial (A)\geq 0$, the inequality (\ref{E-18}) trivially holds.

Let $g_0=0$, $n_0=1$. Thus $G(A)\setminus \partial S$ is  simply-connected and $\chi(G(A)\setminus \partial S)=1$. Suppose that $Lk(A)=\{(e_l,v_{i_l}\}_{l=1}^s$. To prove (\ref{E-18}), we need to show
\begin{equation}\label{E-19}
\sum\nolimits_{v\in A\cap V_\partial}\varphi(v)\,>\,-\sum\nolimits_{l=1}^s\big(\pi-\Theta(e_l)\big)+2\pi-\pi n_\partial (A).
\end{equation}
We divide it into the following cases:
\begin{itemize}
\item[$(i)$] $n_\partial (A)\geq 2$. The inequality (\ref{E-19}) trivially holds.
\item[$(ii)$] $n_\partial (A)=0$. Then either $G(A)\cap \partial S=\emptyset,$ or $G(A)\setminus \partial S$ is bounded by $G(A)\cap \partial S$. Because $G(A)\setminus \partial S$ is simply-connected, the latter case occurs if and only if $(g,n)=(0,1)$ and $A=V$, which contradicts to the assumption that $A$ is a proper subset of $V$. Thus $G(A)\cap \partial S=\emptyset$, and we have $A\cap V_\partial=\emptyset$. Moreover, the edges $e_1,e_2,\cdots,e_s$ form a \textbf{pseudo-Jordan} curve so that $A$ is a non-empty \textbf{enclosing set}. According  to the condition \textbf{(C2)}, one obtains
\[
\sum\nolimits_{v\in A\cap V_\partial}\varphi(v)\,=\,0\,>\,\sum\nolimits_{l=1}^s \Theta(e_l)-(s-2)\pi\,=\,-\sum\nolimits_{l=1}^s\big(\pi-\Theta(e_l)\big)+2\pi-\pi n_\partial (A).
\]
\item[$(iii)$] $n_\partial (A)= 1$. Then the edges $e_1,e_2,\cdots,e_s$ form a \textbf{semi pseudo-Jordan} arc so that $A$ is a non-empty \textbf{semi enclosing} set. Under the condition \textbf{(BV2)}, we have
\[
\sum\nolimits_{v\in A\cap V_\partial}\varphi(v)+\sum\nolimits_{l=1}^s \big(\pi-\Theta(e_l)\big)
\,>\,\pi\,=\,2\pi-\pi n_\partial (A),
\]
which implies (\ref{E-19}).
\end{itemize}

It remains to prove the ``only if" part. Assume that $e_1,e_2,\cdots,e_s$ form a \textbf{pseudo-Jordan curve} $\gamma$ with a non-empty \textbf{enclosing set} $A_\gamma$. Then $G(A_\gamma)\setminus \partial S$ is a simply-connected domain with boundary $\gamma$. Moreover, $A_\gamma\cap V_\partial=\emptyset$, $G(A_\gamma)\cap \partial S=\emptyset$.
This yields
\[
\sum\nolimits_{v\in A_\gamma\cap V_\partial}\varphi(v)\,=\,0,\quad \chi\big(G(A_\gamma)\setminus \partial S\big)\,=\,1,\quad \chi\big(G(A_\gamma)\cap \partial S\big)\,=\,0.
\]
Because $K[\varphi]$ is \textbf{attainable} and  $A_\gamma$ is a proper non-empty subset of $V$, it follows from (\ref{E-18}) that
\[
0\,\,>\,-\sum\nolimits_{(e,v)\in Lk(A_\gamma)}\big(\pi-\Theta(e)\big)+2\pi\,=\,\sum\nolimits_{l=1}^s \Theta(e_l)-(s-2)\pi.
\]
Thus \textbf{(C2)} is satisfied.

Suppose that $e_1,e_2,\cdots,e_s$ form a \textbf{semi pseudo-Jordan} arc $\gamma$ with a non-empty \textbf{semi enclosing set} $W_\gamma$. Similarly, we have
\[
\chi\big(G(W_\gamma)\setminus \partial S\big)\,=\,1,\quad \chi\big(G(W_\gamma)\cap \partial S\big)\,=\,1.
\]
Note that $W_\gamma$ is a proper non-empty subset of $V$. Using (\ref{E-18}), one obtains
\[
\sum\nolimits_{v\in W_\gamma\cap V_\partial}\varphi(v)\,>\,-\sum\nolimits_{(e,v)\in Lk(W_\gamma)}\big(\pi-\Theta(e)\big)+\pi\,=\,-\sum\nolimits_{l=1}^s \big(\pi-\Theta(e_l)\big)+\pi.
\]
That means \textbf{(BV2)} is also satisfied. We finish the proof.
\end{proof}

\section{Acknowledgements}
H. Ge is supported by NSF of China (No.11501027 and No.11871094). B. Hua is supported by NSF of China (No.11831004). Z. Zhou is supported by NSF of China
(No.11601141 and No.11631010) and China Scholarship Council (No. 201706135016).

\noindent Huabin Ge, hbge@bjtu.edu.cn\\[2pt]
\emph{Department of Mathematics, Beijing Jiaotong University, Beijing 100044, P.R. China}\\[2pt]

\noindent Bobo Hua, bobohua@fudan.edu.cn\\[2pt]
\emph{School of Mathematical Sciences, LMNS, Fudan University, Shanghai 200433, P.R. China}\\[2pt]

\noindent Ze Zhou, zhouze@hnu.edu.cn\\[2pt]
\emph{Institute of Mathematics, Hunan University, Changsha, 410082, P.R. China}
\end{document}